\documentclass[11pt]{amsart}
\usepackage{esint, amssymb}
\usepackage{stmaryrd}
\usepackage{fancyhdr}
\usepackage{enumerate}
\usepackage{mathtools}
\usepackage{tikz}
\usetikzlibrary{automata,positioning}
\usepackage{bbm}
\usepackage{blkarray}
\usepackage{xfrac}
\usepackage{bigints}

\usepackage[text={420pt,650pt},centering]{geometry}

\usepackage{xcolor}
\usepackage{graphicx}
\usepackage{MnSymbol}
\usepackage[colorlinks=true, pdfstartview=FitV, linkcolor=blue, citecolor=blue, urlcolor=blue,pagebackref=false]{hyperref}
\usepackage{microtype}

\usepackage{bm}
\usepackage{scalerel} 
\usepackage{dsfont}
\usepackage[font={footnotesize}]{caption}

\usepackage{verbatim}

\usepackage{scalerel}[2014/03/10]
\usepackage[usestackEOL]{stackengine}
\def\dashint{\,\ThisStyle{\ensurestackMath{%
  \stackinset{c}{.2\LMpt}{c}{.5\LMpt}{\SavedStyle-}{\SavedStyle\phantom{\int}}}%
  \setbox0=\hbox{$\SavedStyle\int\,$}\kern-\wd0}\int}
  
\newcommand{\dashsum}{%
    \mathop{
        \mathchoice
        {\ooalign{$\displaystyle\sum$\cr\vphantom{$\displaystyle\sum$}\hidewidth$\displaystyle\kern 0.2ex\vcenter{\hrule height 0.8pt width 0.9em}\kern 0.2ex$\hidewidth}}
        {\ooalign{$\textstyle\sum$\cr\vphantom{$\textstyle\sum$}\hidewidth$\textstyle\kern 0.2ex\vcenter{\hrule height 0.6pt width 0.75em}\kern 0.2ex$\hidewidth}}
        {\ooalign{$\scriptstyle\sum$\cr\vphantom{$\scriptstyle\sum$}\hidewidth$\scriptstyle\kern 0.15ex\vcenter{\hrule height 0.4pt width 0.6em}\kern 0.15ex$\hidewidth}}
        {\ooalign{$\scriptscriptstyle\sum$\cr\vphantom{$\scriptscriptstyle\sum$}\hidewidth$\scriptscriptstyle\kern 0.15ex\vcenter{\hrule height 0.3pt width 0.5em}\kern 0.15ex$\hidewidth}}
    }
}

\fancypagestyle{first}{%
  \fancyhf{}
  \headrulewidth{0pt}

}

\newcommand{\A}{\mathbb A}
\newcommand{\N}{\mathbb N}
\newcommand{\Z}{\mathbb Z}

\newcommand{\R}{\mathbb R}

\renewcommand{\H}{\mathbb H}

\renewcommand{\P}{\mathbb P}

\newcommand{\T}{\mathbb T}
\newcommand{\Rd}{\mathbb R^d}
\newcommand{\Zd}{\mathbb{Z}^d}
\newcommand{\Nd}{\mathbb{N}^d}
\newcommand{\Td}{\mathbb T^d}
\newcommand{\1}{\mathbbm 1}
\newcommand{\uL}{\underline{L}}
\newcommand{\uH}{\underline{H}}

\newcommand{\al}{\alpha}

\newcommand{\ep}{\varepsilon}

\renewcommand{\a}{\mathbf{a}}

\newcommand{\g}{\mathbf{g}}
\newcommand{\h}{\mathbf{h}}

\newcommand{\ahom}{{\overbracket[1pt][-1pt]{\mathbf{a}}}}
\newcommand{\Phom}{{\overline{P}}}
\newcommand{\Qhom}{{\overline{Q}}}

\newcommand{\mc}[1]{\mathcal{#1}}

\renewcommand{\hat}[1]{\widehat{#1}}

\definecolor{officegreen}{rgb}{0.0, 0.5, 0.0}

\newcommand{\hyp}{{\mathrm{hyp}}}
\newcommand{\kin}{{\mathrm{kin}}}

\DeclarePairedDelimiter{\abs}{\lvert}{\rvert}
\DeclarePairedDelimiter{\inp}{(}{)}

\DeclarePairedDelimiter{\insb}{\{}{\}}
\DeclarePairedDelimiter{\floor}{\lfloor}{\rfloor}
\DeclarePairedDelimiter{\ceil}{\lceil}{\rceil}
\DeclarePairedDelimiter{\inprod}{\langle}{\rangle}
\DeclarePairedDelimiter{\norm}{\|}{\|}
\DeclarePairedDelimiter{\snorm}{\llbracket}{\rrbracket}

\newtheorem{thm}{Theorem}
\numberwithin{thm}{section}
\newtheorem{lem}{Lemma}
\numberwithin{lem}{section}

\numberwithin{cor}{section}
\newtheorem{prop}{Proposition}
\numberwithin{prop}{section}
\numberwithin{equation}{section}

\newcommand{\tref}[1]{Theorem~\ref{t.#1}}
\newcommand{\pref}[1]{Proposition~\ref{p.#1}}
\newcommand{\lref}[1]{Lemma~\ref{l.#1}}
\newcommand{\cref}[1]{Corollary~\ref{c.#1}}

\newcommand{\eref}[1]{(\ref{e.#1})}

\theoremstyle{definition}

\numberwithin{defn}{section}

\numberwithin{ex}{section}

\theoremstyle{remark}

\numberwithin{rem}{section}

\hypersetup{
    colorlinks,
    citecolor=black,
    filecolor=black,
    linkcolor=black,
    urlcolor=black
}

\begin{document}
\title[Large-Scale Regularity for the Periodic Kinetic Fokker-Planck equation]{Large-Scale Regularity for the Periodic Kinetic Fokker-Planck equation}

\date{\today} 

\begin{abstract}
We first prove a homogenization result for the fundamental solution of the linear kinetic Fokker Planck equation. We show that this solution converges, in an averaged $L^2$ sense, to the fundamental solution of an effective heat equation with constant effective diffusivity determined by corrector functions solving associated cell problems on the torus. A key feature of the proof is the necessity of second-order correctors to control the averaging of the velocity variable, and the handling of a non-divergence form error term arising from limited spatial regularity of solutions.

Additionally, building on this homogenization result, we establish a large-scale regularity result for solutions of this Fokker Planck equation. More specifically, we show that solutions by heterogeneous polynomials, analogous to Taylor polynomials,  with an explicit error on large scale domains. Furthermore, we show that in a larger regime, this approximating polynomial solves this Fokker Planck equation.
\end{abstract}

\author[P. Gaddy]{Philip Gaddy}
\address[P. Gaddy]{Courant Institute of Mathematical Sciences, New York University, 251 Mercer St., New York, NY 10012}
\email{philip.gaddy@courant.nyu.edu}

\maketitle

\section{Introduction}
\subsection{Motivation and Summary of Results}
We are interested in studying the large scale behavior of solutions to the forced kinetic Fokker-Planck equation, given by:
\begin{equation}
\label{e.hypoeq}
\partial_t f-\nabla_v\cdot\a\nabla_v f + v\cdot\a\nabla_v f -v\cdot \nabla_x f + \nabla H \cdot \nabla_v f = 0\,,
\end{equation}
with some smooth initial condition. For our study, $\a(x,v):\Rd \times \Rd \to \R^{d\times d}$ is a bounded, positive definite, symmetric matrix, and $H: \Rd \to \R$ is a bounded and $\Zd$-periodic function. Of particular interest is the operator defined on the left hand side of the above equation, given by 
$$ Lf :=  -\nabla_v\cdot\a\nabla_v f + v\cdot\a\nabla_v f -v\cdot \nabla_x f + \nabla H \cdot \nabla_v f \,.$$
This operator is the generator for a special case of a Langevin diffusion process, $(X_t, V_t)$, which can be defined by the SDE
\begin{equation}
\label{e.hypomarkoveq}
\begin{cases}
dX_t = V_tdt \\
dV_t \,\,= (-\a V_t + \nabla H(X_t))dt + \sqrt{2}\a^{\sfrac{1}{2}}dW_t\,,
\end{cases}
\end{equation}
where $W_t$ is a standard Brownian motion.  In a more physical sense, this equation arises by considering a particle moving subject to the force field $\nabla H$ together with friction and noise, given by the $\a$ matrix.

This equation also proves to be of interest in the study of degenerate elliptic equations. H\"ormander in \cite{H} and Kohn and Nirenberg in \cite{KN1, KN2} originally consider equations in the form of \eref{hypoeq} in this context to study the generalizability of the regularization properties of elliptic equations. In \cite{AAMN}, Albritton, Armstrong, Mourrat, and Novack develop a variational framework for the kinetic Fokker-Planck equation that parallels the classical energy methods for elliptic equations. In addition to a well-posedness theory for weak solutions of \eref{hypoeq}, they establish Poincar\'e inequalities, Caccioppoli estimates, and fractional Sobolev embeddings in the hypoelliptic Sobolev space $H^1_\hyp$, providing the functional analytic infrastructure that we heavily rely on throughout the rest of this work. Zhu \cite{Z} uses renormalization methods in the tradition of DiPerna-Lions to prove existence and uniqueness for solutions to a more general version of equation \eref{hypoeq} in bounded domains, as well as to provide H\"older regularity estimates. Silvestre \cite{S} independently establishes local H\"older continuity estimates up to the spatial boundary using De Giorgi methods, obtaining precise asymptotic estimates near the influx boundary.

This problem has also been considered probabilistically. Zhang \cite{FS} employed the tools of Malliavin calculus to establish the existence of smooth densities, which serve as fundamental solutions, for kinetic Fokker-Planck operators exhibiting anisotropic nonlocal dissipativity. Leli\`evre, Ramil, and Reygner \cite{Lelievre} studied the absorbed Langevin process in cylindrical domains, proving the existence and regularity of its transition density and deriving an explicit Gaussian upper bound.

Further investigations have explored the effects of boundaries and geometric constraints in kinetic Fokker-Planck equations. Existence and regularity results in this context have been obtained by Carrillo \cite{Car} for Vlasov-Poisson-Fokker-Planck systems, by Nier \cite{Nierbook} in a geometric setting, and more recently by Silvestre \cite{S}, Zhu \cite{Z}, Bernard \cite{Ber}, among others. Finally, the long-time behavior of kinetic Fokker-Planck dynamics, often referred to as hypocoercivity, has been analyzed through both probabilistic and analytic techniques. Probabilistic approaches have been developed by Rey-Bellet and Thomas \cite{RBT}, while analytic treatments can be found in the works of Herau \cite{H1}, Villani \cite{V}, Mischler and Mouhot \cite{Misc}, Baudoin \cite{baudoin}, and others, including the contributions of Hwang et al. \cite{Hwang}.

Previous study of the homogenization of this equation focuses heavily on the Markov process defined in \eref{hypomarkoveq} and, as such, considers it from a more probabilistic perspective. Rodenhausen \cite{R} proves a related homogenization result from the perspective of Langevin dynamics, establishing in particular the validity of the Einstein relation between the effective diffusion constant and mobility. Hairer and Pavlotis in \cite{HP1, HP2} show a more standard homogenization result using techniques specific to the study of stochastic processes. Similarly Papanicolaou and Varadhan in \cite{PV} uses similar techniques to consider the case where $H$ is a random field rather than periodic. In contrast, in this paper we use the functional analysis results developed in \cite{AAMN} to consider this problem from an entirely analytic perspective. Namely, we show that the classical energy methods used to prove homogenization results for elliptic equations may be adapted to apply in this setting. In order to adapt these results, we had to address two major obstacles. Firstly, the degenerate structure in the $x$ variable. Solving this requires not only the use of the H\"ormander inequality instead of more standard elliptic estimates, but also the use of a nonstandard form of the error. This second point is addressed in more detail in Section 4. Additionally, we also have to address the averaging out rather than homogenization of the velocity variable. This requires the use of the second order correctors, in addition to the first order correctors typically present in homogenization proofs.

In addition to the analytic proof of a homogenization result discussed above, we also provide a proof of large-scale regularity for the equation \eref{hypoeq}. Originally discussed by Avellaneda and Lin in \cite{AL1, AL2}, large-scale regularity is one of the most crucial facets of homogenization theory. Avellaneda and Lin develop compactness methods and qualitative large-scale regularity theory in H\"older and Sobolev spaces in \cite{AL1}, and derive Liouville-type theorems as a consequence in \cite{AL2}. Building on the quantitative homogenization theory developed in the elliptic setting, Armstrong, Kuusi, and Smart \cite{AKS} establish large-scale regularity for periodic elliptic equations, which this present paper adapts to the hypoelliptic setting. The methods developed in \cite{AKS} prove to be very general. In particular, and using our homogenization result proved in the first half of this paper, we then go on to adapt the proof for large-scale analyticity to provide a new large-scale regularity result for solutions of \eref{hypoeq}.

\subsection{Explanation of Main Results}
We first begin with a statement of the main homogenization result we will be proving. Before stating this result, we first define notation we use to state the result. For $T>1$, we let 
$$\norm{f}_{\uL^2(\mc{Q}_T; L^2_\gamma)} = \dashint_{\frac{T}{2}}^T\int_{B_{\sqrt{T}}(0)\times \Rd} f^2 (2\pi)^{-d}e^{-H(x) - \frac{\abs{v}^2}{2}}\,dxdvdt\,.$$
Additionally, we say that $f$ solves the homogenized equation provided
\begin{equation}
\label{e.hypohomeq}
\partial_t f - \nabla_x\cdot \ahom\nabla_x f = 0\,,
\end{equation}
where $\ahom_{ij} = -\inprod{v_j\phi_i}$, where $\phi_{i}$ are the correctors associated with \eref{hypoeq}. These are more clearly defined in \lref{correctorexist}.

\begin{thm}
\label{t.perhypohom}
Let $P(t,x,v,y,w)$ and $\Phom$ denote the Green functions for the hypoelliptic equation \eref{hypoeq} and the homogenized equation \eref{hypohomeq}, respectively. Then there exists $C< \infty$ such that, for every $t_0,t \geq 1$, with $t_0\leq \frac{t}{2}$, and $y,w\in \Rd$, 
$$\norm*{P(\cdot,y,\cdot,w) - \Phom(\cdot,\cdot-y)}_{\uL^2(\mc{Q}_t; L^2_\gamma)} \leq C\inp*{t^{-\frac{1}{2}} + \inp*{\frac{t_0}{t}}^\frac{d}{4} + t_0^{-\frac{d}{4}}\inp*{\frac{t_0}{t}} + t_0^{-\frac{1}{2}} \log\inp*{\frac{t}{t_0}}}t_0^{-\frac{d}{4}}\,.$$
\end{thm}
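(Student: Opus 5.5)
The strategy is to decompose at the intermediate time $t_0$. By the semigroup property, $P(t,\cdot,\cdot,y,w) = \int P(t-t_0,\cdot,\cdot,z,u)\,P(t_0,z,u,y,w)\,dz\,du$. Let $g := P(t_0,\cdot,\cdot,y,w)$ be the "initial datum" at time $t_0$. After this time it is already spread over the scale $\sqrt{t_0}$, and its $L^2_\gamma$ mass is bounded by $Ct_0^{-d/4}$; the latter follows from the Nash-type decay estimate available from the $H^1_\hyp$ theory of \cite{AAMN}. I would then compare three objects on $\mc{Q}_t$:
\begin{enumerate}[(i)]
\item the true solution $f$ started from $g$ at time $t_0$;
\item the homogenized solution $\bar f$ of \eref{hypohomeq} started at time $t_0$ from the velocity average $\bar g(x) := \int g(x,u)\,du$ (with the appropriate Gibbs weight);
\item $\Phom(t,\cdot-y)$ itself.
\end{enumerate}
The four error terms in the statement should come from these comparisons. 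The gap $\bar f$ versus $\Phom(t,\cdot - y)$, i.e. replacing the datum $\bar g$ by a Dirac mass, gives $(t_0/t)^{d/4}t_0^{-d/4}$ together with the first-moment term $t_0^{-d/4}(t_0/t)$. These are standard heat-kernel Taylor expansions of $\Phom(t-t_0,\cdot)$ around $x-y$, using that $\bar g$ has unit mass and is concentrated within distance $\sqrt{t_0}$ of $y$, plus a time shift $t\mapsto t-t_0$.

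The main work is the two-scale comparison between $f$ and $\bar f$. I would introduce the expansion
$$w := \bar f + \sum_i \phi_i\,\partial_{x_i}\bar f + \sum_{i,j}\psi_{ij}\,\partial_{x_ix_j}\bar f,$$
where $\phi_i$ are the first-order correctors of \lref{correctorexist} and $\psi_{ij}$ are second-order correctors solving the cell problem. The right-hand side of that cell problem is $v_j\phi_i + \ahom_{ij}$ minus its appropriate average; it has mean zero by the definition $\ahom_{ij} = -\inprod{v_j\phi_i}$, which is what makes the cell problem solvable on the torus.

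The second-order correctors are needed because the transport term $v\cdot\nabla_x$ applied to $\phi_i\partial_i\bar f$ produces a term $v_j\phi_i\partial_{ij}\bar f$ that is of order one, not small. After cancelling it, $(\partial_t + L)w$ consists only of terms with three derivatives of $\bar f$ times bounded periodic functions, plus $\partial_t$ acting on the corrector terms. Heat-kernel bounds for $\bar f$ at times $s\ge t_0$ make these of size $s^{-1/2}$ times the natural scale. I would then write $f - w$ as solving the equation with this error as a source, plus an initial mismatch $g - w(t_0)$.

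The energy estimate for $f-w$ in $L^2_\gamma$, using the dissipation in $\nabla_v$ and the $H^{-1}$ dual pairing, integrates $s^{-1/2}\cdot s^{-1/2}$ over $s\in[t_0,t]$. This gives the $t_0^{-1/2}\log(t/t_0)\,t_0^{-d/4}$ term. The pieces of $w$ carrying correctors contribute $t^{-1/2}t_0^{-d/4}$ when estimated directly in $\mc{Q}_t$.

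The initial mismatch $g - \bar g$ has zero velocity average. I would kill it by the hypocoercive decay of $L$ in velocity within time $O(1)$, or alternatively absorb it into the $t_0^{-1/2}$ term via the hypoelliptic Poincaré inequality.

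I expect the main obstacle to be that the error term is not in divergence form in $v$. The term $v\cdot\nabla_x$ hitting the corrector parts gives $x$-derivatives of the error, and we have no direct spatial regularity of $f$ to control them. To handle this, I would test against the dual (adjoint) problem and use the Hörmander inequality, so that $\nabla_x$ derivatives are controlled in the $H^{-1}_x$-type norm of $H^1_\hyp$. The extra $x$-derivative is then placed on $\bar f$, which is smooth, rather than on $f - w$; the resulting error is controlled in the $H^1_\hyp$ dual norm.
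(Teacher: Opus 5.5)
Your architecture matches the paper's: velocity average of $P(t_0)$, homogenized solution started from it, a two-scale expansion with first- and second-order correctors, and a Taylor comparison with $\Phom$. The step that carries the estimate from $t_0$ to $t$, however, does not work as proposed, and the trouble is at the initial time.

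Your $w$ carries correctors from $s=t_0$ on, so $w(t_0)$ contains $\phi_i\partial_i\bar g+\psi_{ij}\partial_{ij}\bar g$ with $\bar g=\langle P(t_0)\rangle_\gamma$, and $(\partial_t+L)w$ contains $\partial_t\nabla\bar f$, $\nabla^3\bar f$ and $\partial_t\nabla^2\bar f$. Nothing available to you controls two $x$-derivatives of $\bar g$. The only spatial regularity of solutions is the H\"ormander bound of \pref{thormander} ($H^\alpha$ with $\alpha<\frac13$). Moreover, $\langle P(t_0)\rangle_\gamma$ carries unit-scale periodic oscillations, so even heuristically $\nabla^2\bar g$ is not $O(t_0^{-1})$ relative to $\bar g$. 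With only the Gaussian bound on $\bar g$ from \pref{nasharonson}, you get $\|\nabla^k\bar f(s)\|_{L^2}\lesssim (s-t_0)^{-k/2}\|\bar g\|_{L^2}$. For $k=3,4$ this is not integrable at $s=t_0$, so your Gronwall/energy integral diverges. This is why the paper multiplies the correctors by the cutoff $\zeta$, which vanishes on $(t_0,2t_0)$: correctors then only appear where $s-t_0\geq s/2$, and that is what makes \eref{Qhombound} available.

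Once you cut off, the residual contains $(1-\zeta)(v\cdot\nabla_x\bar f-\partial_t\bar f)$ on a layer of length comparable to $t_0$. This term is of order one relative to the solution, and a forward $L^2(m)$ energy estimate cannot make it harmless.
\begin{itemize}
\item Writing $v\cdot\nabla_x\bar f=\nabla_v^*\nabla_x\bar f$ and absorbing into the dissipation gives at best $\|f-w\|^2\lesssim\int_{t_0}^{3t_0}\|\nabla\bar f(s)\|_{L^2}^2\,ds$, which is of the order of $\|\bar g\|_{L^2}^2\sim t_0^{-d/2}$.
\item Since the $L^2(m)$ norm is only non-increasing afterwards, you end with $\|f-w\|(t)\lesssim t_0^{-d/4}$.
\item But the right-hand side is $o(t_0^{-d/4})$ when $t_0\to\infty$ and $t/t_0\to\infty$ with $\log(t/t_0)=o(t_0^{1/2})$.
\end{itemize}

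The missing ingredient is decay of the propagator between $s\approx t_0$ and $t$. The paper gets it by writing $P-W$ through Duhamel's formula against the Green function and using \pref{nasharonson} together with \eref{gradnasharonson}:
\begin{itemize}
\item the layer term $v\cdot\nabla_x\Qhom$ is paired with $\nabla_v$ of the kernel;
\item the layer term $\ahom:\nabla^2\Qhom$ is handled by \pref{thormander} applied to the kernel;
\item the mismatch $P(t_0)-\langle P(t_0)\rangle_\gamma$ is handled by Fubini and the Gaussian Poincar\'e inequality in the kernel's velocity variable. This, not the corrector part of $w$, is where $t^{-1/2}t_0^{-d/4}$ comes from.
\end{itemize}
Note that $(t_0/t)^{d/4}t_0^{-d/4}=t^{-d/4}$, which by \pref{nasharonson} is already the size of $P$ itself in this norm. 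That term is the initial layer spread to scale $\sqrt t$ by the kernel, and no $L^2$-contraction argument produces it.

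To repair your scheme, propagate $f-w$ from the end of the layer using the $L^1\to L^2$ decay implied by \pref{nasharonson}; its $L^1$ mass there is $O(1)$. Then use your energy estimate only after the layer, where it does work. By \eref{Qhombound}, periodicity of the correctors and \lref{orlicz}, the source is at most $O(s^{-3/2})$ relative to $\bar f$ in $L^2$. This integrates to $t_0^{-1/2}t_0^{-d/4}$ with no logarithm; in the paper the logarithm comes from the Duhamel time integral against the kernel, not from an energy integral.

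Finally, the obstacle in your last paragraph is misplaced. With $\psi_{ij}$ solving the cell problem with the full $m$-mean subtracted, every $v\cdot\nabla_x$ falling on $\phi_i$ or $\psi_{ij}$ is absorbed by the corrector equations. Your source is therefore just periodic $L^2_\gamma$ functions times derivatives of $\bar f$, and no $x$-derivative of the error needs controlling. The paper's non-divergence term $(\ahom(x)-\ahom):\nabla^2\Qhom$ is bounded pointwise with \pref{nasharonson}, not by duality and H\"ormander. Also, the bound $\|P(t_0)\|\lesssim t_0^{-d/4}$ comes from \pref{nasharonson}, not from the variational $H^1_\hyp$ theory.
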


To prove this estimate, we first begin by picking a mesoscopic time where we begin our analysis. From here, we define $\Qhom$ to be the solution of the homogenized equation that equals $P$ at this mesoscopic time. It is then relatively easy to show that $\Phom$ and $\Qhom$ are close in the metric defined in the theorem as they solve the same equation and their initial conditions have the same mean.

The main homogenization proof appears in trying to then estimate the difference between $P$ and $\Qhom$. Here, we look at the two scale expansion and construct an approximate solution of \eref{hypoeq} by attaching the correctors to $\Qhom$. This approximate solution is then necessarily close to $\Qhom$ by construction. The majority of our proof lies in showing that this approximate solution is close to $P$. For this, we need not only the first order correctors typically needed in proofs of this type, but also the second order correctors to better control the fluctuations in the velocity variable as it drops out over the course of this proof. Another nonstandard difficulty is that the error coming from this approximate solution is in non-divergence form. As we will see in the discussion leading up to the proof, there actually is a divergence form for this error. However, using this form to bound our solutions later requires having much sharper estimates on the spatial regularity of solutions to \eref{hypoeq} than we generically do.

In proving this, we first construct the correctors and approximate solution as usual. One slight difference to note is that our approximate solution includes a second order corrector, which we call $\psi$. This extra corrector proves necessary to help control the convergence of a solution to scaled equation to it's $v$ average. which we then show is close to the homogenized equation. Once this has been defined, we then prove several estimates on the approximate solution. This, combined with the Nash-Aronson type bound proved in \cite{Nash}, are then enough to finish the proof of this theorem.

Then, from this, we prove the following regularity result. First, we let 
$$Q_r := \inp*{-\frac{1}{2}r, \frac{1}{2} r}^d\,,$$
and $\A_m$ and $\A_m^0$ be the spaces of heterogeneous polynomials, which we define in section 3.
\begin{thm}[Large-Scale Regularity]
\label{t.largescale}
There exists $C < \infty$ such that, for every $m\in\N$ with $m\geq 0$, $R\geq 2Cm$ and $f\in H_\hyp^1(Q_R)$ a solution of 
$$-\nabla_v\cdot\a\nabla_v f +v\cdot\a\nabla_v f -v\cdot\nabla_x f + \nabla H\cdot \nabla_v f= 0 \quad \text{ in } Q_R\,,$$
there exists $\psi \in\A_m$ such that, for every $r\in [Cm, R]$, 
$$\norm{f-\psi}_{\uL^2(Q_r;L^2_\gamma)} \leq \inp*{\frac{Cr}{R}}^{m+1} \norm{f}_{\uL^2(Q_R; L^2_\gamma)}\,.$$
Furthermore, if $R\geq 2Cm^2$, then there exists $\psi \in \A_m^0$ satisfying the same bound for all $r\in [Cm^2, R]$.
\end{thm}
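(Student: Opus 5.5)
We follow the large-scale analyticity argument of Armstrong--Kuusi--Smart \cite{AKS}, transplanted to the hypoelliptic setting. The argument is a Campanato-type iteration over a geometric sequence of scales $r_k = \theta^k R$. At each scale the solution is compared with a solution of the homogenized equation, and the homogenized solution is in turn approximated by a degree-$m$ polynomial in $x$.

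\textbf{Ingredients.} Three facts are needed.

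First, a quantitative homogenization estimate on cubes, deduced from \tref{perhypohom}. For $f$ solving the stationary equation in $Q_r$, there is a $v$-independent solution $\bar f$ of $-\nabla_x\cdot\ahom\nabla_x \bar f = 0$ in $Q_{r/2}$ with
$$\norm{f-\bar f}_{\uL^2(Q_{r/2};L^2_\gamma)} \leq C r^{-\alpha}\norm{f}_{\uL^2(Q_r;L^2_\gamma)}.$$
I would obtain this by representing $f$ via the Green function on a time slab of length comparable to $r^2$ (a stationary solution is a time-independent solution of \eref{hypoeq}). Alternatively, one can rerun the two-scale expansion with correctors $\phi_i$ and the second-order corrector $\psi$ on the cube, using Caccioppoli and the H\"ormander inequality from \cite{AAMN}. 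In that route, the non-divergence error is controlled by $r^{-1}$ times the $L^2$ norm.

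Second, the definition of $\A_m$ as solutions of the equation with growth of degree at most $m$. Via the two-scale expansion, every $\ahom$-harmonic polynomial $q$ of degree $m$ produces an element of $\A_m$, namely $q + \phi_i\partial_i q + \psi_{ij}\partial_{ij}q + \cdots$, whose difference from $q$ on $Q_r$ is at most $C m\, r^{-1}$ times its size. This is where the lower thresholds $r\geq Cm$ and $r\geq Cm^2$ enter.

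Third, analytic interior estimates for $\ahom$-harmonic functions. For $\bar f$ harmonic in $Q_r$ and its degree-$m$ Taylor polynomial $q$,
$$\norm{\bar f - q}_{L^2(Q_{\theta r})}\leq (C\theta)^{m+1}\norm{\bar f}_{L^2(Q_r)},$$
with $C$ independent of $m$.

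\textbf{Iteration.} Set $E_k = \inf_{\psi\in\A_m}\norm{f-\psi}_{\uL^2(Q_{r_k};L^2_\gamma)}$. Apply the homogenization estimate to $f-\psi_k$, which is again a solution. Then approximate by the harmonic polynomial and lift it to $\A_m$. This gives the one-step improvement
$$E_{k+1}\leq \bigl((C\theta)^{m+1} + C\theta^{-d/2}r_k^{-\alpha}\bigr)E_k.$$
As long as $r_k\geq Cm$, choosing $\theta$ small makes the error term at most a fraction of $(C\theta)^{m+1}$. Iterating down to scale $r$ yields $E(r)\leq (Cr/R)^{m+1}\norm{f}_{\uL^2(Q_R;L^2_\gamma)}$.

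One must also check that the minimizers $\psi_k$ converge to a single $\psi$. This follows because consecutive minimizers differ by at most a multiple of $E_k$, and norms of elements of $\A_m$ grow at most like $(r/s)^m$ across scales. Summing the geometric series then gives a single $\psi$ valid for all $r\in[Cm,R]$.

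\textbf{The $\A_m^0$ statement.} Presumably $\A_m^0$ consists of the members of $\A_m$ that are exact solutions without truncation, or that are homogeneous in the corrector expansion. To handle it, I would include higher-order correctors up to order $m$. Each order costs a factor $m$ in the constants, which is what forces the threshold $r\geq Cm^2$.

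\textbf{Main obstacle.} The hardest step is making the homogenization error $r^{-\alpha}$ uniform in the steady, localized setting while keeping all constants independent of $m$. \tref{perhypohom} concerns Green functions globally. Localizing requires cutting off in $x$ while the unbounded velocity variable is controlled only through the Gaussian weight $L^2_\gamma$. In addition, the error term is in non-divergence form, so only the limited $x$-regularity from the H\"ormander inequality is available. I would first try to absorb this error using the $H^{1/3}_x$-type gain from \cite{AAMN} together with Caccioppoli.
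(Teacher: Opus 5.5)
\textbf{There is a genuine gap: an algebraic homogenization rate cannot give constants uniform in $m$.} Your route is the standard excess-decay (Campanato) scheme, and it breaks at the one step that must make the constants independent of $m$. In your one-step inequality
\[
E_{k+1}\le \bigl((C\theta)^{m+1}+C\theta^{-d/2}r_k^{-\alpha}\bigr)E_k ,
\]
the per-step factor has the form $(C'\theta)^{m+1}$ only if $C\theta^{-d/2}r_k^{-\alpha}\lesssim (C\theta)^{m+1}$. Decay requires $C\theta<1$, so this forces $r_k\gtrsim (C\theta)^{-(m+1)/\alpha}$, which is exponential in $m$. Taking $\theta$ smaller makes this worse, not better. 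The lifting error $Cm/(\theta r_k)$ has the same defect. So your claim that ``as long as $r_k\ge Cm$, choosing $\theta$ small'' suffices is false.

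Suppose you are granted a localized rate $r^{-\alpha}$ on cubes. This does not follow directly from \tref{perhypohom}, which is a whole-space estimate for the Cauchy Green function, and you leave it unproved. Even then, the scheme yields at best a large-scale $C^{m,1}$ estimate above a scale $e^{cm}$. It does not give $(Cr/R)^{m+1}$ with $C$ independent of $m$ for all $r\in[Cm,R]$.

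\textbf{The missing idea: use periodicity directly, not homogenization.} Because the coefficients are $\Zd$-periodic in $x$, unit finite differences $D^kf$ of a solution are again solutions. Iterating the Caccioppoli--H\"ormander estimate \eref{D1f} over $m$ nested cubes of width of order $R/m$ gives \lref{fdbound}:
\[
\|D^m\hat f\|_{L^\infty(Q_{R/2})}\le (Cm/R)^m\|f\|_{\uL^2(Q_R;L^2_\gamma)},
\]
with no homogenization error at all. This is exactly where the linear threshold $Cm$ comes from. The paper then proceeds as follows:
\begin{enumerate}
\item It chooses $\psi\in\A_m$ with $D^k\hat\psi(0)=D^k\hat f(0)$ for all $k\le m$, using \lref{hetpolybound}.
\item It controls the lattice averages $\hat g$ of $g=f-\psi$ through $D^{m+1}\hat f$.
\item It controls the remaining cell and velocity fluctuations with the heat-kernel Poincar\'e inequality \lref{heatkernelpoincare} and the weighted Caccioppoli estimate with polynomial source \lref{polylem}, iterated down in scale.
\end{enumerate}
\tref{perhypohom} is never used.

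\textbf{You also misidentify $\A_m$ and $\A_m^0$.} In the paper, $\A_m=\{\sum_{|\alpha|\le m}\phi_\alpha\partial^\alpha q : q\in\P_m\}$. By \lref{correctorexist}, $L\psi=\mc{A}q=\sum_\alpha\ahom_\alpha\partial^\alpha q$. This polynomial is in general nonzero even when $q$ is $\ahom$-harmonic, because $\mc{A}$ contains the higher-order terms $\ahom_\alpha\partial^\alpha$ with $|\alpha|\ge 3$. Two consequences follow:
\begin{itemize}
\item For the first statement, $f-\psi_k$ is \emph{not} a solution. This is why the paper needs a Caccioppoli inequality with a polynomial right-hand side.
\item Your lift $q+\phi_i\partial_iq+\psi_{ij}\partial_{ij}q+\cdots$ of an $\ahom$-harmonic polynomial does not solve the equation.
\end{itemize}
The space $\A_m^0$ consists of exact solutions, i.e.\ those with $\mc{A}q=0$. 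To reach it, the paper inverts $\mc{A}$ as a perturbation of the Laplacian (\lref{invertmacroop}, \lref{hetpolysolveseq}). It then corrects $\psi$ by some $\xi\in\A_m$ with $L\xi=L\psi$. The $Cm^2$ threshold comes from the bounds on this inverse, not from a factor $m$ per corrector order.
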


The variational structure presented in \cite{AAMN} allows us to adapt the proof of the large scale regularity presented in \cite{AKS} to the hypoelliptic case. It is worth noting, however, that the statement here differs from the principal large-scale regularity result presented in the aforementioned paper. Specifically, we show that $f$ can only be approximated to this degree of accuracy by a heterogeneous polynomial that is a to solution of the equation when the scale $R$ is well above the minimum scale, $Cm$. When $R$ approaches $Cm$, we can merely require that $\psi\in \A_m$ and not that $\psi$ also solve the equation. Removing this qualification may be possible, but will require a much more fine tuned approach. This is the result of the proof of \cite{AKS}[Lemma 3.4] requiring an additional condition; the condition $r \geq Cm$ given there is not sufficient to control the sums on the right-hand side of the key inequality. This then requires imposing the stronger condition $R\geq Cm^2$ in the secondary statement of \tref{largescale}.

\section{Functional Preliminaries and Inequalities}
Throughout the course of this paper, we heavily rely on the functional analysis framework for solutions of \eref{hypoeq} as well as the time independent developed throughout \cite{AAMN}. As most of the function spaces and notation used are non standard, we use this section to cover these definitions as well as several results that will be important for proving \tref{perhypohom} and \tref{largescale} in sections 4 and 5 respectively. This first subsection discusses the time dependent equation \eref{hypoeq}, while the following discusses the time independent case.

\subsection{Time Dependent Results}

We begin by defining the following measures
\begin{equation*}
\left\{  
\begin{aligned}
& d\sigma(x) := \exp\left( -H(x) \right) \,dx\,, \\
& d\gamma(v):= \frac1{(2\pi)^d} \exp\left( -\frac12|v|^2 \right)\,dv\,, \\
& dm(x,v) := d\sigma(x) d\gamma(v)\, , \\
& dm_t(t,x,v) := dtd\sigma(x) d\gamma(v)\, .
\end{aligned}
\right.
\end{equation*}
First, we notice that, since $H$ is bounded, in particular $\abs{H}\leq \Lambda$, there are positive constants $C(\Lambda), c(\Lambda) <\infty$ such that
$$c(\Lambda) \int_U \abs{f} \,dx \leq \int_U \abs{f}\,d\sigma \leq C(\Lambda) \int_U \abs{f}\,dx\,,$$
for any $f\in L^1(U)$. As such, the measure $\sigma$ is equivalent to the Lebesgue measure, and we will swap between the two frequently throughout some proofs. This measure is important largely for simplifying several integration by parts calculations later on.

For $I\subset\R_+$ and $U\subset \Rd$, we define the space $H^1_\kin(I\times U)$ is defined by
\begin{equation*}  
H^1_\kin(I\times U) := \insb*{ f \in L^2\left(I\times U;H^1_\gamma\right) \ : \ \partial_t f - v \cdot \nabla_x f \in L^2(I \times U;H^{-1}_\gamma)}\,,
\end{equation*}
and equip it with the norm
\begin{equation*}
\left\| f \right\|_{H^1_\kin(I\times U)} 
:=
\left( \left\| f \right\|_{L^2(I\times U;H^1_\gamma)}^2
+ 
\left\| \partial_tf - v \cdot \nabla_x f \right\|_{L^2(I\times U;H^{-1}_\gamma)}^2 \right)^{\frac12}\,.
\end{equation*}
We also define the seminorm
\begin{equation*}
\left\llbracket f \right\rrbracket_{H^1_\kin(I\times U)} 
:=
\left( \left\| \nabla_v f \right\|_{L^2(I\times U;L^2_\gamma)}^2
+ 
\left\| \partial_tf - v \cdot \nabla_x f \right\|_{L^2(I\times U;H^{-1}_\gamma)}^2 \right)^{\frac12}
\end{equation*}
so that we have $\left\| f \right\|_{H^1_\kin(I\times U)}^2 = \left\llbracket f \right\rrbracket_{H^1_\kin(I\times U)}^2 + \left\| f \right\|_{L^2(I\times U;L^2_\gamma)}^2$. For most of this paper, we will work with $I = (t-1,t)$ for some $t\in \R$. To simplify notation, we set $U_t := (t-1,t)\times U$.

We say that $f \in H^1_\kin(I\times U)$ is a \emph{weak solution} of the equation
\begin{equation*}
\partial_t f -\nabla_v\cdot\a\nabla_v f + v \cdot \a\nabla_v f - v \cdot \nabla_x f + \nabla H(x) \cdot \nabla_v f = 0 \quad \mbox{in} \ U \times \Rd
\end{equation*}
provided that, for all $g\in L^2(I\times U; H^1_\gamma)$
\begin{equation}
\label{e.tweaksoldef1}
\int_{I\times U\times \Rd} 
\nabla_vg\cdot \a\nabla_v f \,dm_t
=
\int_{I\times U\times \Rd} 
g\left(
v\cdot \nabla_xf
-\partial_t f
- \nabla H\cdot \nabla_vf \right)\,dm_t\,.
\end{equation}

To reduce later notation, we define a linear operator $(\partial_t + B):H^1_\kin(I\times U)\to L^2(I\times U;H^{-1}_\gamma)$ by
\begin{equation*}
Bf := -v\cdot \nabla_x f + \nabla H \cdot \nabla_vf\, . 
\end{equation*}
Additionally, we define $\nabla_v^*$ as the formal adjoint of $\nabla_v$ with respect to the inner product of~$L^2_\gamma$. To be more precise, $\nabla_v^*$ is a linear map from $(L^2_\gamma)^d$ to $H^{-1}_\gamma$ given by
\begin{equation*}
\int_{\Rd} f \nabla_v^* \g \,d\gamma
=
\int_{\Rd} \nabla_v f\cdot \g \,d\gamma\,. 
\end{equation*}
With these two definitions, we can rewrite \eref{tweaksoldef1}  as follows
\begin{equation}
\label{e.tweaksoldef2}
\forall g\in L^2(U;H^1_\gamma),  \quad
\int_{U\times \Rd} 
\left( \nabla_vg\cdot \a\nabla_v f + g (\partial_t+B)f \right) \,dm
=
0\,.
\end{equation}
Consequently, we can rewrite \eref{hypoeq} as 
$$\partial_tf + \nabla_v^* \a\nabla_v f + Bf = 0\,.$$

Now that we have defined the major function space we will be working with, we state a few inequalities satisfied by elements of this space. We begin by proving a Caccioppoli inequality for solutions of \eref{hypoeq}
Let 
$$T_r = (-r^2, 0]\times B_{r}\,.$$
Then, 
\begin{lem}[Caccioppoli inequality]
\label{l.tcaccioppoli}
There exists a constant~$C(d,\Lambda)<\infty$ such that, for every $r>0$ and $f \in H^1_\kin(T_{2r})$ satisfying
\begin{equation}
\label{e.tcaccio.pde}
\partial_t f -\nabla_v\cdot\a\nabla_v f + v\cdot \a\nabla_v f - v\cdot \nabla_x f + \nabla H\cdot \nabla_v f = 0 \quad \mbox{in} \ T_{2r} \times\Rd\,, 
\end{equation}
we have that
\begin{equation*}
\left\llbracket f \right\rrbracket_{H^1_{\kin}(T_{r}) }
\leq 
C r^{-1}
\left\| f \right\|_{L^2(T_{2r};L^2_\gamma)}\,.
\end{equation*}
\end{lem}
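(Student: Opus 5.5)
We prove the estimate in two parts: the $\nabla_v$ part by a standard energy argument with a cutoff, and the transport part $\partial_t f - v\cdot\nabla_x f$ by duality from the equation.

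\textbf{Energy estimate.} Fix a smooth cutoff $\eta(t,x)$, independent of $v$, with $\eta\equiv 1$ on $T_r$ and $\eta$ supported in $(-4r^2,0]\times B_{2r}$, vanishing near $t=-4r^2$ and near $\partial B_{2r}$. Take $|\nabla_x\eta|\le Cr^{-1}$ and $|\partial_t\eta|\le Cr^{-2}$. We test \eref{tweaksoldef2} on a time interval $(-4r^2,s)$ with $g=\eta^2 f$, which is admissible since $f\in L^2H^1_\gamma$ and $\eta$ does not depend on $v$. The terms are handled as follows.

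The diffusion term gives $\int \eta^2\nabla_v f\cdot\a\nabla_v f\,dm_t$, which is bounded below by $\lambda\int\eta^2|\nabla_v f|^2\,dm_t$.

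For the transport part $\int \eta^2 f(\partial_t+B)f\,dm_t$, the key point is that $B$ is antisymmetric in $L^2(dm)$. Indeed, the generator $-v\cdot\nabla_x+\nabla H\cdot\nabla_v$ preserves $e^{-H(x)-|v|^2/2}$, since
\begin{equation*}
\nabla_x\cdot(v\,e^{-H-|v|^2/2})-\nabla_v\cdot(\nabla H\,e^{-H-|v|^2/2})=-v\cdot\nabla H+\nabla H\cdot v=0 .
\end{equation*}
Hence $\int \eta^2 fBf\,dm=-\tfrac12\int f^2 B(\eta^2)\,dm=\tfrac12\int f^2\,v\cdot\nabla_x(\eta^2)\,dm$. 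The time derivative gives $\tfrac12\frac{d}{dt}\int\eta^2f^2$ minus $\int \eta\,\partial_t\eta\, f^2$.

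Taking the supremum over $s$ yields
\begin{equation*}
\sup_s\int \eta^2 f^2(s)\,dm+\int\eta^2|\nabla_v f|^2\,dm_t \le C\int f^2\bigl(|\partial_t\eta|+|v||\nabla_x\eta|\bigr)\,dm_t .
\end{equation*}
The $|\partial_t\eta|$ term contributes $Cr^{-2}\|f\|^2_{L^2(T_{2r};L^2_\gamma)}$.

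\textbf{Main obstacle: the velocity weight.} The term $r^{-1}\int |v| f^2\,dm_t$ carries an unbounded factor $|v|$. To handle it I use the Gaussian Poincaré-type inequality
\begin{equation*}
\int |v|^2 h^2\,d\gamma\le C\int\bigl(h^2+|\nabla_v h|^2\bigr)\,d\gamma ,
\end{equation*}
which follows from integrating by parts in $\nabla_v^*$, since $\nabla_v^*(v h)$ contains $|v|^2h$. Applying it with $h=\eta f$ and using Young's inequality,
\begin{equation*}
r^{-1}\int|v|\eta f^2 \le \delta\int|v|^2\eta^2 f^2+C\delta^{-1}r^{-2}\int f^2 .
\end{equation*}
Strictly, this requires $\eta$ on the left and $\eta^0$ on the right; I arrange this by writing $|\nabla_x\eta^2|=2\eta|\nabla_x\eta|$. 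The weighted term is then bounded by $\delta C\int\eta^2(f^2+|\nabla_v f|^2)$. The gradient part is absorbed into the left side. The $\delta\int\eta^2 f^2$ part is $\le \delta\|f\|^2$, which is fine only when $r\lesssim1$.

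For $r$ large this leaves a bound of the form $C(r^{-2}+\delta)\|f\|^2$, and the cheap estimate is not scale invariant. I therefore expect the genuine difficulty to lie here. The first remedy I would try is to optimize $\delta$ more carefully: balance $r^{-1}|v|$ by splitting into $|v|\le r$ and $|v|>r$, so that
\begin{equation*}
r^{-1}|v|\le 1+r^{-2}|v|^2 .
\end{equation*}
The region $|v|>r$ is controlled by the weighted inequality with small coefficient $r^{-2}$. If scale invariance still fails for $r\ge1$, I would settle for the estimate in the regime actually used later, namely unit time windows, where $r$ is bounded.

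\textbf{Transport norm.} To bound $\|\partial_t f-v\cdot\nabla_x f\|_{L^2(T_r;H^{-1}_\gamma)}$, I read the equation as
\begin{equation*}
\partial_t f-v\cdot\nabla_x f=-\nabla_v^*\a\nabla_v f-\nabla H\cdot\nabla_v f .
\end{equation*}
The first term has $H^{-1}_\gamma$ norm at most $C|\nabla_v f|_{L^2_\gamma}$. The second term has $L^2_\gamma$ norm at most $\Lambda\|\nabla_v f\|$, assuming $\nabla H$ is bounded, which Lipschitz $H$ gives. Both are therefore controlled by the $\nabla_v$ estimate on $T_r$ from the energy step, which completes the proof.
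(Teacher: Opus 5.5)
Your proposal has a genuine gap at the step you flag yourself, and the fallback you suggest does not prove the lemma. Once you bound $\tfrac12\int f^2\,v\cdot\nabla_x(\eta^2)\,dm_t$ by $Cr^{-1}\int |v|\,\eta f^2\,dm_t$, the cancellation in $v$ is lost and cannot be recovered afterwards.

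To see this, take $f$ independent of $v$. Then $\int v f^2\,d\gamma=0$, but $\int |v| f^2\,d\gamma$ is comparable to $\int f^2\,d\gamma$. So the term you kept is of order $r^{-1}\|f\|^2$, not $r^{-2}\|f\|^2$. For the same reason, any inequality $\int |v|^2 h^2\,d\gamma\le C\int (h^2+|\nabla_v h|^2)\,d\gamma$ must keep the $h^2$ term on the right (take $h\equiv 1$). This is why both your Young-inequality version and the splitting $r^{-1}|v|\le 1+r^{-2}|v|^2$ leave an error that does not decay like $r^{-2}$.

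Restricting to bounded $r$ is not an option either. The lemma is stated for all $r>0$, and the paper uses it at large scales: with $r\sim\sqrt{t}$ it produces the factor $t^{-1}$ in \eref{gradnasharonson}.

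The missing idea is to integrate by parts against the Gaussian \emph{before} taking absolute values. You already have the identity you need: $\nabla_v^* e_i = v_i$, that is, $\int v_i F\,d\gamma=\int \partial_{v_i} F\,d\gamma$. Since $\eta$ does not depend on $v$, this gives
\begin{align*}
\frac12\int f^2\, v\cdot\nabla_x(\eta^2)\,dm_t
&= \frac12\int \nabla_x(\eta^2)\cdot\nabla_v (f^2)\,dm_t
= 2\int \eta f\,\nabla_x\eta\cdot\nabla_v f\,dm_t \\
&\leq \frac12\int \eta^2|\nabla_v f|^2\,dm_t + 2\int |\nabla_x\eta|^2 f^2\,dm_t\,.
\end{align*}
The first term is absorbed into the left side. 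The second is at most $Cr^{-2}\|f\|^2_{L^2(T_{2r};L^2_\gamma)}$, uniformly in $r>0$. This is exactly the paper's step $-\int\phi^2 fBf\,dm_t=-\int 2\phi f\nabla\phi\cdot\nabla_v f\,dm_t$, followed by Cauchy's inequality.

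With this replacement, the rest of your argument goes through and matches the paper. This includes the time-derivative term, where your $\sup_s$ treatment of the endpoint $t=0$ is if anything more careful than the paper's. It also includes the bound on $\partial_t f - v\cdot\nabla_x f$ in $L^2(T_r;H^{-1}_\gamma)$, read off from the equation via $\nabla_v^*\a\nabla_v f$ and $\nabla H\cdot\nabla_v f$.
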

\begin{proof}
We begin by selecting $\phi\in C^\infty_c(T_{2r})$ satisfying
$$
\1_{T_r}\leq \phi \leq \1_{T_{2r}}
\quad 
\text{and}
\quad
\abs{\partial_t \phi} + \abs{\nabla\phi}^2 \leq Cr^{-2}\, .
$$
Testing \eqref{e.tweaksoldef2} of the equation with $\phi^2 f$ and using the ellipticity of $\a$, we see that
\begin{equation}
\label{e.tcaccio.plugin}
\int_{T_{2r}\times\Rd} \phi^2 \left| \nabla_vf \right|^2 \,dm_t
\leq 
- \int_{T_{2r}\times\Rd} \phi^2 f (\partial_t + B)f \,dm_t\,. 
\end{equation} 
Integrating by parts, the right hand side becomes
\begin{align*}
-\int_{T_{2r}\times\Rd} \phi^2 f Bf \,dm_t &= -\int_{T_{2r}\times\Rd} 2\phi f \nabla\phi\cdot \nabla_v f \,dm_t \\
&\leq \frac12\int_{T_{2r}\times\Rd} \phi^2 \left| \nabla_vf \right|^2 \,dm_t + 2\int_{T_{2r}\times\Rd} \left|\nabla\phi \right|^2 f^2 \,dm_t\,,
\end{align*}
where the last step follows by applying Cauchy's inequality. Similarly, we can integrate by parts again to see that
\begin{align*}
-\int_{T_{2r}\times \Rd} \phi^2 f\partial_t f \,dm_t &= -\frac{1}{2}\int_{T_{2r}\times \Rd} \phi^2 \partial_t f^2 \,dm_t \\
&= \int_{T_{2r}\times \Rd} \phi f^2 \partial_t \phi \,dm_t\,.
\end{align*}
Combining this with \eref{tcaccio.plugin} and using the properties of $\phi$, 
\begin{equation}
\label{e.tcaccgradbnd}
\int_{T_r\times\Rd} \abs{ \nabla_vf}^2 \,dm_t
\leq 
Cr^{-2} \int_{T_{2r} \times\Rd} 
f^2\,dm_t\,. 
\end{equation}
This yields the desired estimate of $\left\| \nabla_v f \right\|_{L^2( T_r; L^2_\gamma)}$. To estimate $\left\| \partial_tf -v\cdot \nabla_x f \right\|_{L^2(T_r; H^{-1}_\gamma)}$, we test \eqref{e.tweaksoldef2} with $g \exp(H)$, for $g\in L^2(T_r;H^1_\gamma)$, and use \eqref{e.tcaccgradbnd} and the upper bound on $\a$ to see that
\begin{align*}
\left| \int_{T_r\times\Rd} g (\partial_t f - v\cdot \nabla_x f) 
\,dtdx\,d\gamma \right| 
& =
\left| \int_{T_r\times\Rd} \left( \a\nabla_vg +g \nabla H \right) \cdot \nabla_vf
\,dtdx\,d\gamma \right| 
\\ & 
\leq C \left\| g \right\|_{L^2(T_r;H^1_\gamma)} 
\left\| \nabla_v f \right\|_{L^2(T_r;L^2_\gamma)}
\\ & 
\leq 
Cr^{-1} \left\| g \right\|_{L^2(T_r;H^1_\gamma)} 
\left\| f \right\|_{L^2(T_{2r};L^2_\gamma)}\,.
\end{align*}
Taking the supremum over $g\in L^2(T_r;H^1_\gamma)$ with $\left\| g \right\|_{L^2(T_r;H^1_\gamma)}\leq 1$ yields 
\begin{equation*}
\left\| \partial_t f - v\cdot \nabla_x f \right\|_{L^2(T_r;H^{-1}_\gamma)} \leq Cr^{-1}\left\| f \right\|_{L^2(T_{2r};L^2_\gamma)}\,. 
\end{equation*}
Combining \eqref{e.tcaccgradbnd} and the previous line yields
\begin{align*}
\snorm{f}_{H^1_{\kin}(T_r) }^2
&=
\left\| \nabla_v f \right\|_{L^2(T_r; L^2_\gamma)}^2
+
\left\| \partial_t f - v\cdot \nabla_x f \right\|_{L^2(T_r;H^{-1}_\gamma)}^2\\
&\leq  Cr^{-2}\left\| f \right\|^2_{L^2(T_{2r};L^2_\gamma)}\,,
\end{align*}
completing the proof.
\end{proof}

Additionally, we will use the following bound on the space regularity of functions in $H^1_{\kin}$. This exact bound is stated in a remark to Proposition $6.6$ in \cite{AAMN}, but we state this as the main proposition as it is the bound we make use of 
\begin{prop}[H\"ormander Inequality of $H^1_\kin$]
\label{p.thormander}
There exists $C(d)<\infty$ such that, for $f\in H^1_\kin((\frac{T}{2}, T)\times \Rd)$ and $\alpha\in [0, \frac{1}{3})$, 
$$\norm{f}_{L^2((\frac{T}{2}, T); H^\alpha(\Rd; H^{-1}_\gamma))} \leq C\norm{f}_{H^1_\kin((\frac{T}{2}, T)\times \Rd)}\,.$$
\end{prop}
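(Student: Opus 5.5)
The plan is a Fourier-analytic averaging lemma in the spirit of H\"ormander's original argument, adapted to the Gaussian setting as in \cite{AAMN}. Let $\tau=\partial_t-v\cdot\nabla_x$. By definition $f\in L^2(I;L^2(\Rd;H^1_\gamma))$ and $\tau f\in L^2(I\times\Rd;H^{-1}_\gamma)$, with $I=(\frac T2,T)$. First we extend $f$ from $I$ to all of $\R$ by reflection across the endpoints combined with a smooth cutoff in time. This keeps the $H^1_\kin$ norm under control with constant $C(d)$ only. Since the interval length $T/2$ could be small, we should rescale so that the cutoff derivative stays bounded. Alternatively, we use the $L^2$ part of the norm, which absorbs $\partial_t\chi\, f$ into $L^2 H^{-1}_\gamma$ at the cost of a factor of $T$; this is a point to check. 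Either way we may then take the Fourier transform in $(t,x)$, with dual variables $(\omega,\xi)$.

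For fixed $(\omega,\xi)$ the transformed function $\hat f(\omega,\xi,\cdot)$ lies in $H^1_\gamma$. It satisfies
\begin{equation*}
i(\omega - v\cdot\xi)\hat f = \hat g, \qquad \hat g\in H^{-1}_\gamma.
\end{equation*}
We must bound $|\xi|^{\alpha}\|\hat f\|_{H^{-1}_\gamma}$, pointwise in $(\omega,\xi)$, by a multiple of $\|\hat f\|_{H^1_\gamma}+\|\hat g\|_{H^{-1}_\gamma}$. This bound is the key step. To get it, we test against $h\in H^1_\gamma$ with $\|h\|_{H^1_\gamma}\le1$ and split with a smooth cutoff $\chi_\delta(\omega-v\cdot\xi)$, where $\chi_\delta$ localizes to $\{|\omega-v\cdot\xi|\le\delta\}$.

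\emph{Near region.} The set $\{v: |\omega - v\cdot\xi|\le\delta\}$ is a slab of width $\delta/|\xi|$. Its Gaussian measure is therefore at most $C\delta/|\xi|$. Using the Sobolev/Gaussian embedding of $H^1_\gamma$ into $L^{p}_\gamma$, Hölder gives
\begin{equation*}
\Bigl|\int \hat f h\chi_\delta\,d\gamma\Bigr|\le C(\delta/|\xi|)^{\theta}\|\hat f\|_{H^1_\gamma}.
\end{equation*}
Here $\theta>0$ comes from the integrability gain. In one dimension along the direction $\xi/|\xi|$ one may simply use $L^\infty$ control of $h$, which gives $\theta=\frac12$ after Cauchy–Schwarz.

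\emph{Far region.} Here we write $\hat f = \hat g/(i(\omega-v\cdot\xi))$. We then bound
\begin{equation*}
\langle \hat g, h(1-\chi_\delta)/(\omega-v\cdot\xi)\rangle
\end{equation*}
by $\|\hat g\|_{H^{-1}_\gamma}$ times the $H^1_\gamma$ norm of the multiplier $h(1-\chi_\delta)(\omega-v\cdot\xi)^{-1}$. Differentiating in $v$ produces a factor $|\xi|/\delta^2$, while the undifferentiated term is $O(1/\delta)$. This gives a bound of order $(\delta^{-1}+|\xi|\delta^{-2})\|\hat g\|_{H^{-1}_\gamma}$.

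We then optimize over $\delta$. Taking $\delta=|\xi|^{\beta}$ and balancing $(\delta/|\xi|)^{1/2}$ against $|\xi|\delta^{-2}$ gives $\delta=|\xi|^{3/5}$ with the $\theta=\frac12$ choice. A sharper treatment should produce the exponent $\frac13$: one can put the $v$-derivative on $\hat f$ rather than $h$, using $\nabla_v\hat f\in L^2_\gamma$, or interpolate more efficiently. This balance yields $|\xi|^{\alpha}$ decay for every $\alpha<\frac13$ (for $|\xi|\ge1$; the region $|\xi|\le1$ is trivial). Squaring, integrating over $(\omega,\xi)$ and using Plancherel gives the stated estimate.

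I expect the main obstacle to be exactly this exponent bookkeeping: obtaining $\frac13$ rather than a worse exponent requires placing derivatives carefully and choosing $\chi_\delta$ with good derivative bounds. A secondary obstacle is handling the Gaussian weight, since multiplication by $(\omega-v\cdot\xi)^{-1}$ interacts with $\nabla_v^*$, which involves $v$. I would rely on the fact that $\nabla_v^* = -\nabla_v\cdot + v\cdot$, and that the $v$ factor is controlled in $H^1_\gamma$ by the Gaussian Poincar\'e estimates. This is the remark to Proposition 6.6 in \cite{AAMN}, which I would follow as the template.
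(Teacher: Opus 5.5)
The paper does not prove this proposition; it quotes it from the remark after Proposition~6.6 of \cite{AAMN}. Your argument therefore has to stand on its own, and as written it has two genuine gaps.

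The first gap is the exponent, which is the whole content of the statement. With $\theta=\tfrac12$ your balance gives only $|\xi|^{-1/5}$. Within this near/far splitting, no improvement of the far region can compensate: making $(\delta/|\xi|)^{1/2}\le|\xi|^{-1/3}$ forces $\delta\le|\xi|^{1/3}$, and at that point the $H^1_\gamma$ norm of the far-region multiplier no longer decays. The loss is in the near region. Write $v=se+v_\perp$ with $e=\xi/|\xi|$. What you need is the weighted one-dimensional trace bound
\begin{equation*}
\sup_{s\in\R} e^{-s^2/4}\,\norm{u(se+\cdot)}_{L^2(\gamma_{d-1})}\le C\norm{u}_{H^1_\gamma}\,.
\end{equation*}
It follows from the one-dimensional Sobolev inequality applied to $s\mapsto e^{-s^2/4}\norm{u(se+\cdot)}_{L^2(\gamma_{d-1})}$, together with $\norm{vu}_{L^2_\gamma}\le C\norm{u}_{H^1_\gamma}$ (\lref{orlicz}). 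The weight matters: an unweighted $L^\infty$ bound on $h$ is not available, and the slab sits near $s=\omega/|\xi|$, which can be far from the origin. Apply the bound to \emph{both} $\hat f$ and $h$; this is where $\nabla_v\hat f\in L^2_\gamma$ should be used. The two factors $e^{s^2/4}$ cancel the Gaussian density exactly, so the slab contributes only its Lebesgue width. The near term is then at most $C(\delta/|\xi|)\norm{\hat f}_{H^1_\gamma}\norm{h}_{H^1_\gamma}$, i.e.\ $\theta=1$. Your crude far-region bound $C(\delta^{-1}+|\xi|\delta^{-2})\norm{\hat g}_{H^{-1}_\gamma}$ now suffices. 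Taking $\delta=|\xi|^{2/3}$ yields $|\xi|^{1/3}\norm{\hat f}_{H^{-1}_\gamma}\le C(\norm{\hat f}_{H^1_\gamma}+\norm{\hat g}_{H^{-1}_\gamma})$ for $|\xi|\ge1$, uniformly in $\omega$, which covers every $\alpha\le\tfrac13$. No interaction with $\nabla_v^*$ arises: the far-region pairing only needs $\norm{h\mu}_{H^1_\gamma}$ for the bounded Lipschitz multiplier $\mu=(1-\chi_\delta)/(\omega-v\cdot\xi)$.

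The second gap is the extension in time, which fails as described. The reflection $t\mapsto 2T-t$ turns $\partial_t-v\cdot\nabla_x$ into $-(\partial_t+v\cdot\nabla_x)$, so the reflected piece would require $v\cdot\nabla_xf\in L^2H^{-1}_\gamma$, which is not available. The reflection $(t,v)\mapsto(2T-t,-v)$ does preserve the operator, but the traces $f(T,\cdot,v)$ and $f(T,\cdot,-v)$ disagree, so $\partial_t$ of the glued function has a singular part at $t=T$. A cutoff alone only gives the estimate on a compact subinterval of $(\tfrac{T}{2},T)$.

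An extension that works is as follows. Continue $f$ past $T$ by solving $\partial_tF-v\cdot\nabla_xF+\nabla_v^*\nabla_vF=0$ on $(T,T+1)$ with $F(T)=f(T)$. Before $\tfrac{T}{2}$, solve the time-reversed problem backward from $f(\tfrac{T}{2})$. The energy identity gives $\norm{F}_{H^1_\kin((T,T+1)\times\Rd)}\le C\norm{f(T)}_{L^2(\Rd;L^2_\gamma)}$. The traces are bounded by $\norm{f}_{H^1_\kin((\frac{T}{2},T)\times\Rd)}$ via $\tfrac{d}{dt}\norm{f(t)}^2_{L^2(\Rd;L^2_\gamma)}=2\langle(\partial_t-v\cdot\nabla_x)f,f\rangle$. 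One then cuts off on a unit time scale.

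Finally, rescaling cannot handle small $T$, because $H^1_\gamma$ fixes the velocity scale. In fact the constant cannot be uniform as $T\to0$. Take $f=g(x+tv)$, which satisfies $(\partial_t-v\cdot\nabla_x)f=0$, with $\hat g$ supported in $\{T^{-1}\le|\xi|\le2T^{-1}\}$; the ratio of the two sides is then of order $T^{-\alpha}$. The statement should therefore be read for $T$ bounded below, e.g.\ $T\ge1$ as in the paper, where all cutoff and trace constants are uniform.
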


Next, we provide a bound which we will use in the proof of \tref{perhypohom}. We essentially show that, with respect to the $L^2_\gamma$ norm, we can relate multiplication by $v$ to taking a derivative in the $v$ variable.This is proven in \cite{AAMN}[(3.34)], but we reproduce this discussion here as a lemma for the sake of readability.
\begin{lem}
\label{l.orlicz}
There exists $C(d)<\infty$ such that, for $f\in L^2(U; H^1_\gamma)$, 
$$\norm{v f}_{L^2(U; L^2_\gamma)} \leq C\norm{f}_{L^2(U; H^1_\gamma)}\,.$$
\end{lem}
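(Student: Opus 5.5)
The estimate is a Gaussian (Hermite-operator) weighted inequality in the $v$ variable alone, so I will prove it pointwise in $x\in U$ and then integrate over $U$. Fix $x$ and write $g=f(x,\cdot)\in H^1_\gamma$. The key identity is the Gaussian integration by parts
$$
\int_{\Rd} v_i\, h\,d\gamma = \int_{\Rd} \partial_{v_i} h\,d\gamma,
$$
valid for $h\in W^{1,1}_\gamma$ because $\partial_{v_i} e^{-\abs{v}^2/2} = -v_i e^{-\abs{v}^2/2}$. In the notation of the paper, this says $\nabla_v^*\g = -\nabla_v\cdot\g + v\cdot \g$.

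I will apply the identity with $h = v_i g^2$ and sum over $i$. Since $\partial_{v_i}(v_i g^2) = g^2 + 2 v_i g\,\partial_{v_i} g$, this gives
$$
\int \abs{v}^2 g^2\,d\gamma = d\int g^2\,d\gamma + 2\int g\, v\cdot\nabla_v g\,d\gamma .
$$
By Cauchy's inequality, the last term is bounded by $\frac12\int\abs{v}^2g^2\,d\gamma + 2\int\abs{\nabla_v g}^2\,d\gamma$. Absorbing it into the left-hand side yields
$$
\int \abs{v}^2 g^2\,d\gamma \leq 2d\int g^2\,d\gamma + 4\int \abs{\nabla_v g}^2\,d\gamma \leq C(d)\norm{g}_{H^1_\gamma}^2 .
$$

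Integrating in $x$ over $U$ then gives the claim with $C=\max(2d,4)^{1/2}$. If the spatial measure is $d\sigma$ rather than $dx$, only the constant changes, by the equivalence of $\sigma$ and Lebesgue measure noted above.

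The only delicate point is justifying the absorption step, since a priori $\int\abs{v}^2g^2\,d\gamma$ might be infinite. I will handle this in one of two ways. The first is to prove the inequality for $g\in C_c^\infty(\Rd)$, where every integral is finite and the boundary terms vanish, and then pass to general $g$ by density of $C_c^\infty$ in $H^1_\gamma$ together with Fatou's lemma on the left-hand side. The second is to run the argument with $v$ replaced by a truncation $v\,\chi(\abs{v}/R)$, with $\chi$ a cutoff function, and let $R\to\infty$ by monotone convergence. The density route is cleaner and is the one I would use. This step is routine, and I expect no real obstacle beyond it.
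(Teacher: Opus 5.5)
Your proof is correct, and it takes a different route from the paper.

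\textbf{The paper's route.} It argues by Orlicz duality. The logarithmic Sobolev inequality places $f^2$ in the $L\log L$ class $L_F$, with $F(t)=|t|\log(1+|t|)$. Next, $|v|^2$ lies in the dual exponential class $L_{F^*}$, since $F^*(s)\le e^s-s$ and $\gamma$ has Gaussian tails. The generalized H\"older inequality then concludes.

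\textbf{Your route.} You use a single Gaussian integration by parts: the identity $\nabla_v^*\mathbf{g}=-\nabla_v\cdot\mathbf{g}+v\cdot\mathbf{g}$, tested against $v g^2$. Absorbing the cross term gives the explicit bound $\int|v|^2g^2\,d\gamma\le 2d\int g^2\,d\gamma+4\int|\nabla_v g|^2\,d\gamma$. Your density-plus-Fatou justification of the absorption step is sound.

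\textbf{What each approach buys.} Your argument is shorter, avoids the log-Sobolev and Orlicz machinery, and gives an explicit constant $\max(2d,4)^{1/2}$. Because you work fiberwise in $x$, it is also plain that $C$ depends only on $d$. By contrast, the Luxemburg norm of $|v|^2$ over $U\times\Rd$ with respect to $dx\,d\gamma$, which the paper uses, grows with $|U|$. So the paper's argument should also be read fiberwise to yield a constant independent of $U$.

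The paper's route is more robust in a different direction. It only uses that $\gamma$ satisfies a log-Sobolev inequality and that the weight has exponential moments. It therefore controls $\int w f^2\,d\gamma$ for any weight $w$ with $\int e^{\varepsilon w}\,d\gamma<\infty$, and for other log-Sobolev reference measures. Your computation instead exploits the exact relation $v=-\nabla_v\log(d\gamma/dv)$, which ties the weight $|v|^2$ to the Gaussian density itself.
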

\begin{proof}
Let $F(t) = \abs{t}\log(1+\abs{t})$ and $F^*(s) = \sup_{t\in\R} st - F(t)$ be its convex dual. We note that both $F$ and $F^*$ are nonnegative, convex, even, and strictly increasing on $[0,\infty)$. Additionally, $F(0) = F^*(0) = 0$ and 
$$\lim_{\abs{t}\to\infty} \abs{t}^{-1} F(t) = \lim_{\abs{s}\to\infty} \abs{s}^{-1}F^*(s) = \infty\,.$$
Therefore, the Orlicz spaces $L_F(U\times \Rd, dxd\gamma)$ and $L_{F^*}(U\times\Rd, dxd\gamma)$ defined by the norms 
\begin{align*}
\norm{f}_{L_F(U\times \Rd, dxd\gamma)} &= \inf\insb*{t>0 : \int_{U\times\Rd} F(t^{-1} g)\,dxd\gamma \leq F(1)} \\
\norm{f}_{L_{F^*}(U\times \Rd, dxd\gamma)} &= \inf\insb*{s>0 : \int_{U\times\Rd} F^*(s^{-1} g)\,dxd\gamma \leq F^*(1)}
\end{align*}
are dual Banach spaces and satisfy the following generalized H\"older inequality
$$\int_{U\times \Rd} \abs{fg}\,dxd\gamma \leq \norm{f}_{L_F(U\times \Rd, dxd\gamma)}\norm{g}_{L_{F^*}(U\times \Rd, dxd\gamma)}\,,$$
for all $f\in L_F(U\times \Rd, dxd\gamma)$ and $g\in L_{F^*}(U\times\Rd, dxd\gamma)$. See \cite[Proposition 3.3.1]{RR} for a proof of this fact.

Now, using the logarithmic Sobolev inequality, which states that, for $f\in H^1_\gamma$ with $\norm{f}_{L^2_\gamma} =1$, 
$$\int_{\Rd} f^2 \log(1 + f^2)\,d\gamma \leq C \int_{\Rd}\abs{\nabla f}^2\,d\gamma\,,$$
we see that, for $f\in L^2(U; H^1_\gamma)$,
$$\norm{f}_{L_F(U\times \Rd, dxd\gamma)} \leq C \norm{f}_{L^2(U; H^1_\gamma)}\,.$$
Additionally, as $s(t+1) \leq t\log(1+t) + e^s$ for $s,t>0$, we see that $F^*(s) \leq e^s -s$. In particular, this allows us to conclude that
$$\norm*{\abs{v}^2}_{L_{F^*}(U\times \Rd, dxd\gamma)} \leq C\,.$$
Combining the previous two displays and the generalized H\"older inequality given above yields the desired inequality.
\end{proof}

Finally, in the proof of \tref{perhypohom}, we make use of some bounds on the fundamental solution of \eref{hypoeq}. We begin with a Nash-Aronson type estimate. The proof of this result can be found in \cite{Nash}. 
\begin{prop}[Nash-Aronson Estimate]
\label{p.nasharonson}
Let $P(t,x,v,s,y,w)$ be the Green function for \eref{hypoeq}. Then there exists a constant $C>0$ such that for $A> \frac{256(4+2\norm{\nabla H}^2_\infty)}{3}$ and for all $t,s>0$ and  $x,y,v,w\in\Rd$,
$$P(t,x,v,s,y, w) \leq C t^{-2d}\exp\inp*{-\tfrac{3\abs{(x-y) +tw}^2}{At^3} - \tfrac{32\abs{v-w}^2 +32\abs{w}^2}{3At}} +C t^{-\frac{d}{2}}\exp\inp*{-\tfrac{\abs{x-y}^2}{8At}} \,.$$
\end{prop}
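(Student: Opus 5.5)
The proof is not proved here; the paper defers it to \cite{Nash}. If I were to prove it myself, I would follow the Nash--Aronson strategy adapted to the kinetic setting. The plan is to combine a short-time kinetic Gaussian bound, reflecting the degenerate scaling $(t, t^{3/2}, t^{1/2})$ in $(t,x,v)$, with a long-time diffusive bound coming from the confinement of velocity by the Ornstein--Uhlenbeck friction. These two regimes produce the two terms in the statement.

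\textbf{Step 1: on-diagonal bound.} I would first establish $\sup P(t,\cdot,\cdot,s,y,w)\le C t^{-2d}$ for short times. This would come from a Nash-type argument: an $L^1\to L^2$ estimate for the semigroup using the energy identity $\frac{d}{dt}\|f\|^2_{L^2}=-2\int \nabla_v f\cdot \a\nabla_v f$, together with a kinetic Nash inequality. The kinetic Nash inequality can be derived from the H\"ormander inequality of \pref{thormander}, which controls a fractional $x$-derivative of order $<1/3$ by the $H^1_\kin$ norm. Duality, using the adjoint equation, and the semigroup property would then give $L^1\to L^\infty$.

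\textbf{Step 2: Gaussian tails via Davies' method.} I would conjugate the equation by exponential weights $e^{\lambda\cdot(x-y+tw)+\mu\cdot(v-w)}$, adapted to the free transport characteristic $x=y-tw$ (the drift term $-v\cdot\nabla_x$ moves $x$ along $-v$). Rerunning the energy estimates for the weighted function, the commutator terms are quadratic in the weights. The bounded force $\nabla H$ enters through $\|\nabla H\|_\infty^2$, which is why $A$ depends on it. Optimizing over $\lambda,\mu$ with $|\lambda|\sim t^{-3/2}$ and $|\mu|\sim t^{-1/2}$ would produce the first term. The extra $|w|^2/t$ decay would come from the friction term $v\cdot\a\nabla_v$, which pulls velocities back toward $0$ at rate $O(1)$.

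\textbf{Step 3: long times.} For large $t$ the velocity equilibrates, and the process behaves like a diffusion in $x$ with variance $\sim t$. For this regime I would iterate the Step 2 estimate on unit time intervals using Chapman--Kolmogorov, with an $x$-only weight $e^{\lambda\cdot x}$ and $\lambda\sim t^{-1/2}$, giving growth $e^{C|\lambda|^2 t}$. A Poincar\'e/hypocoercivity argument in $v$ for the Gaussian measure $\gamma$ would absorb the $v\cdot\nabla_x$ contribution. Combined with a $t^{-d/2}$ on-diagonal bound obtained from the spatial Nash inequality after averaging in $v$, this would give the second term. The bound would then be completed by taking the sum of the two regimes.

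\textbf{Main obstacle.} I expect the hardest part to be the weighted energy estimate in Step 2. The transport term does not commute with the $v$-weight, producing cross terms like $\mu\cdot\nabla_x$-type errors and $t\lambda$ couplings. These must be absorbed using only $v$-dissipation plus hypoellipticity, which determines the specific constants ($3/A$, $32/3A$) and the threshold on $A$. My first attempt would be to use time-dependent weights solving an associated Hamilton--Jacobi inequality, in the style of Aronson's original argument for kinetic operators.
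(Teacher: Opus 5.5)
The paper does not prove this proposition; it is imported from \cite{Nash}. Your outline therefore has to stand on its own, and it breaks down exactly where you account for the extra $|w|^2/t$ decay.

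\textbf{The factor $\exp(-\tfrac{32|w|^2}{3At})$.} This factor cannot come from friction. Friction acts at rate $O(1)$, so on $[0,t]$ it shifts the velocity by only $O(t|w|)$, which produces no $e^{-c|w|^2/t}$ suppression. In fact no argument can produce the displayed bound, because it fails for the admissible data $H\equiv 0$, $\a=I$:
\begin{itemize}
\item In that case $P(t,x,v,\cdot,\cdot)\,dm$ is the law at time $t$ of $dX=V\,dt$, $dV=-V\,dt+\sqrt{2}\,dW$ started at $(x,v)$. This is the normalization used in the paper's Duhamel formula.
\item This law is Gaussian with mean $(x+(1-e^{-t})v,\,e^{-t}v)$ and covariance determinant $O(t^{4d})$ for $t\le 1$. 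Since $1/\gamma(w)\ge(2\pi)^d$, one gets $P\ge c\,t^{-2d}$ at the mean.
\item Take $|v|=1$. At the mean the right-hand side is at most $Ct^{-2d}e^{-c/t}+Ct^{-d/2}=o(t^{-2d})$ as $t\to0$, a contradiction.
\item Similarly, fix $t$ and let $|v|\to\infty$. At the mean, $|v-w|$ and $|x-y|$ are both $\asymp|v|$, so both terms on the right (which contain $e^{-c|v-w|^2/t}$ and $e^{-c|x-y|^2/t}$) tend to $0$, while $P$ there stays bounded below.
\end{itemize}
So Steps 2--3 can at best deliver a Kolmogorov-type Gaussian centred on the friction-corrected characteristic $(x+(1-e^{-t})v,\,e^{-t}v)$. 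That bound has no $|w|^2/t$ term and is not of the stated form uniformly in large velocities. The statement itself must be corrected before it can be proved.

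\textbf{Step 1.} This step has an independent gap. Nash's method needs, at each fixed time, an inequality $\|f\|_{L^2}^{2+4/n}\le C\,\mathcal{E}(f)\,\|f\|_{L^1}^{4/n}$ with $\mathcal{E}(f)=\int\nabla_v f\cdot\a\nabla_v f\,dm$. No such inequality holds for this degenerate form: every $v$-independent $f$ has $\mathcal{E}(f)=0$.

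\pref{thormander} does not supply it, for two reasons:
\begin{itemize}
\item It controls $x$-regularity through the full $H^1_\kin$ norm. That norm contains the space--time quantity $\|\partial_t f-v\cdot\nabla_x f\|_{L^2H^{-1}_\gamma}$, which cannot be inserted into the differential inequality for $\|f(t)\|_{L^2}^2$.
\item It is a $\gamma$-weighted estimate on a fixed time window with a non-scale-invariant constant, so it cannot yield the rate $t^{-2d}$ at small times.
\end{itemize}
An on-diagonal bound needs a genuinely space--time mechanism. One option is a Moser iteration on kinetic cylinders driven by a scale-invariant kinetic gain-of-integrability estimate; another is a hypocoercive modified functional.

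\textbf{Step 3.} The $t^{-d/2}$ bound likewise needs a hypocoercive argument. The velocity average $\langle f\rangle_\gamma$ does not satisfy a closed equation to which a spatial Nash inequality could be applied.
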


Additionally, we also make use of a bound on the $H^1_{\kin}$ seminorm of the fundamental solution. First, to simplify notation, we define
$$\Gamma_c(t,x) = t^{-\frac{d}{2}} \exp\inp*{- c\frac{\abs{x}^2}{t}}\,.$$ 
Then, we can apply \lref{tcaccioppoli} and \pref{nasharonson} to see that, for $x,v,z\in \Rd$
\begin{align}
\label{e.gradnasharonson}
\dashint_\frac{t}{2}^t \dashint_{B_{\sqrt{t}}(z)} \norm{\nabla_v &P(s,x,v,\cdot)}_{L^2_\gamma}^2 +\norm{(\partial_t-v\cdot\nabla_x)P(s,x,v,\cdot)}^2_{H^{-1}_\gamma}\,dxds \\
&\leq t^{-1} \norm{P(\cdot, x,v,\cdot)}_{L^\infty((\frac{t}{2}, t)\times B_{\sqrt{t}}(z))} \leq Ct^{-1}\Gamma_c(t, x-z)^2\,.\nonumber
\end{align}

\subsection{Time Independent Results}
In this subsection, we introduce the functional analytic framework used in section 5 to prove \tref{largescale}. Using the same measure definitions as in the previous section, we give similar definitions of our function space and weak solution as before, omitting the $t$ variable.

The space $H^1_\hyp(U)$ is defined by
\begin{equation*}  
H^1_\hyp(U) := \insb*{ f \in L^2\left(U;H^1_\gamma\right) \ : \ v \cdot \nabla_x f \in L^2(U;H^{-1}_\gamma)}\,,
\end{equation*}
and it is a Banach space with respect to the norm
\begin{equation*}
\left\| f \right\|_{H^1_\hyp(U)} 
:=
\left( \left\| f \right\|_{L^2(U;H^1_\gamma)}^2
+ 
\left\| v \cdot \nabla_x f \right\|_{L^2(U;H^{-1}_\gamma)}^2 \right)^{\frac12}\,.
\end{equation*}
We also define the seminorm
\begin{equation*}
\left\llbracket f \right\rrbracket_{H^1_\hyp(U)} 
:=
\left( \left\| \nabla_v f \right\|_{L^2(U;L^2_\gamma)}^2
+ 
\left\| v \cdot \nabla_x f \right\|_{L^2(U;H^{-1}_\gamma)}^2 \right)^{\frac12}
\end{equation*}
so that we have $\left\| f \right\|_{H^1_\hyp(U)}^2 = \left\llbracket f \right\rrbracket_{H^1_\hyp(U)}^2 + \left\| f \right\|_{L^2(U;L^2_\gamma)}^2$.

We say that $f \in H^1_\hyp(U)$ is a \emph{weak solution} of the equation
\begin{equation}
\label{e.hypoeqnot}
-\nabla_v\cdot\a\nabla_v f + v \cdot \a\nabla_v f - v \cdot \nabla_x f + \nabla H(x) \cdot \nabla_v f = 0 \quad \mbox{in} \ U \times \Rd
\end{equation}
provided that, for all $g\in L^2(U; H^1_\gamma)$
\begin{equation}
\label{e.weaksoldef1}
\int_{U\times \Rd} 
\nabla_vg\cdot \a\nabla_v f \,dm
=
\int_{U\times \Rd} 
g\left(
v\cdot \nabla_xf
- \nabla H\cdot \nabla_vf \right)\,dm\,.
\end{equation}
Using the notation of $\nabla^*_v$ and $B$ defined previously, we can rewrite \eref{weaksoldef1}  as follows
\begin{equation}
\label{e.weaksoldef2}
\forall g\in L^2(U;H^1_\gamma),  \quad
\int_{U\times \Rd} 
\left( \nabla_vg\cdot \a\nabla_v f + g Bf \right) \,dm
=
0\,.
\end{equation}
Consequently, we can rewrite the equation \eref{hypoeqnot} as 
$$\nabla_v^* \a\nabla_v f + Bf = 0\,.$$

\smallskip

Now that we have defined the major function space we will be working with, we state a few inequalities satisfied by elements of this space.
\begin{prop}[{Poincar\'e inequality for $H^1_\hyp$~\cite[Theorem 1.3]{AAMN}}]
\label{p.hypoelliptic.poincare}
Fix a bounded $C^{1,1}$ domain~$U\subseteq\Rd$. There exists~$C(d,U)<\infty$ such that, for every $f \in H^1_{\hyp} (U)$, 
\begin{equation*} 
\left\| f - (f)_U \right\|_{L^2 (U; L^2_\gamma )} 
\leq
C \left\llbracket f \right\rrbracket_{H^1_\hyp(U)}\,.
\end{equation*}
Moreover, if in addition we have $f\in H^1_{\hyp,0}(U)$, then
\begin{equation*} 
\left\| f \right\|_{L^2(U;L^2_\gamma)} 
\leq
C \left\llbracket f \right\rrbracket_{H^1_\hyp(U)}\,.
\end{equation*}
\end{prop}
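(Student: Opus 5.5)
This is the hypoelliptic Poincaré inequality of \cite[Theorem 1.3]{AAMN}, so here we only outline its proof. We would argue by duality, reducing both statements to the construction of a suitable test field. Set $\dot f := f-(f)_U$ and split it as $\dot f = (\dot f - \langle \dot f\rangle_\gamma) + \langle \dot f\rangle_\gamma$, where $\langle\cdot\rangle_\gamma$ denotes the average in $v$ against $\gamma$. The first piece is controlled directly by the Gaussian Poincaré inequality in $v$:
\[
\|\dot f-\langle \dot f\rangle_\gamma\|_{L^2(U;L^2_\gamma)}\le C\|\nabla_v f\|_{L^2(U;L^2_\gamma)}.
\]
It therefore remains to control the $v$-independent, mean-zero function $g:=\langle \dot f\rangle_\gamma\in L^2(U)$. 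For this we pair $g$ against itself and convert the $x$-derivative into the transport term $v\cdot\nabla_x f$ tested against a function with polynomial $v$-dependence.

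The key construction is a test function $\phi$ built from $g$. Since $(g)_U=0$ and $U$ is a bounded $C^{1,1}$ (in particular Lipschitz) domain, the Bogovskii operator, or alternatively the solution of a Neumann problem $\Delta w=g$ with $\nabla w\in H^1$, provides a vector field $\mathbf{F}\in H^1_0(U)^d$ with $\nabla\cdot \mathbf{F}=g$ and $\|\mathbf F\|_{H^1(U)}\le C\|g\|_{L^2(U)}$. In the case $f\in H^1_{\hyp,0}(U)$ the mean-zero condition is not needed and we may take $\mathbf F=\nabla w$ with $w$ solving a Dirichlet problem. We then set $\phi(x,v):=v\cdot \mathbf F(x)$, which lies in $L^2(U;H^1_\gamma)$ with norm at most $C\|g\|_{L^2}$ because the Gaussian moments are finite. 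Using $\int v_iv_j\,d\gamma=\delta_{ij}$, we compute
\[
\int_{U\times\Rd} \phi\, v\cdot\nabla_x f\,dx\,d\gamma = -\int_U \Bigl(\sum_{i,j}\partial_j F_i\int v_iv_j f\,d\gamma\Bigr)dx .
\]
Decomposing $\int v_iv_j f\,d\gamma=\delta_{ij}\langle f\rangle_\gamma + \int (v_iv_j-\delta_{ij}) f\,d\gamma$, the first term gives $-\int_U (\nabla\cdot \mathbf F)\langle f\rangle_\gamma = -\|g\|_{L^2}^2$; here we use $\int_U g\,(f)_U=0$. The second term involves the Hermite polynomial $v_iv_j-\delta_{ij}$, which has zero $\gamma$-mean and equals $\nabla_v^*$ of a linear field, so after one Gaussian integration by parts it is bounded by $C\|\nabla\mathbf F\|_{L^2}\|\nabla_v f\|_{L^2(U;L^2_\gamma)}$. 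The left-hand side is bounded by $\|\phi\|_{L^2(U;H^1_\gamma)}\|v\cdot\nabla_x f\|_{L^2(U;H^{-1}_\gamma)}$. Combining these,
\[
\|g\|_{L^2}^2\le C\|g\|_{L^2}\snorm{f}_{H^1_\hyp(U)},
\]
and adding back the first piece proves both estimates.

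The main obstacle is justifying the integration by parts in $x$ for $f$ that is only in $H^1_\hyp(U)$. There $v\cdot\nabla_x f$ exists only in $H^{-1}_\gamma$ and no trace of $f$ is available, so the boundary terms must be killed by requiring $\mathbf F\in H^1_0(U)$. This is why the mean-zero Bogovskii construction is needed in the first estimate and why the domain regularity enters. We would handle it by approximating $f$ with smooth functions in the $H^1_\hyp$ norm; density of smooth functions holds by mollification in $x$, the standard argument from \cite{AAMN}. The $C^{1,1}$ regularity guarantees the $H^1$ bound for the Neumann and Dirichlet solutions in the alternative construction.
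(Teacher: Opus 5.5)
Your proof of the first estimate is correct, but your proof of the second estimate has a gap at the boundary. The paper itself gives no proof and only quotes \cite[Theorem 1.3]{AAMN}, so your outline has to stand on its own.

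For the first estimate, the ingredients are the Gaussian Poincar\'e inequality in $v$, the test function $\phi=v\cdot\mathbf F$ with $\mathbf F\in H^1_0(U)^d$ a Bogovskii solution of $\nabla\cdot\mathbf F=\langle f\rangle_\gamma-(f)_U$, and the identity $v_iv_j-\delta_{ij}=\partial_{v_i}^*v_j$. This argument only requires $U$ to be bounded and Lipschitz. Two side claims should be dropped:
\begin{itemize}
\item The Neumann alternative does not give a field in $H^1_0(U)^d$. On $\partial U$, $\nabla w$ has zero normal component but nonzero tangential component, and $\int_{\partial U}\int_{\Rd}(v\cdot n_U)(v\cdot\nabla w)f\,d\gamma\,dS$ does not vanish, since $f$ depends on $v$. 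So Bogovskii is the only admissible choice, and $C^{1,1}$ plays no role in your first estimate.
\item No approximation of $f$ is needed. For $\mathbf F\in C^\infty_c(U)^d$, the identity $\int\phi\,v\cdot\nabla_xf=-\int f\,v\cdot\nabla_x\phi$ is the definition of the distributional derivative, after truncating $\phi$ in $v$. It then extends to $\mathbf F\in H^1_0(U)^d$ because both sides are continuous in $\norm{\mathbf F}_{H^1(U)}$.
\end{itemize}

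The gap is in the second estimate. You assume that $f\in H^1_{\hyp,0}(U)$ vanishes on all of $\partial U\times\Rd$, so that $\mathbf F=\nabla w$ with $w\in H^1_0(U)\cap H^2(U)$ can be used without a boundary term. In \cite{AAMN}, however, $H^1_{\hyp,0}(U)$ is (to my recollection) the closure of smooth functions vanishing only on the hypoelliptic boundary. This is the half of $\partial U\times\Rd$ on which $v\cdot n_U$ has one fixed sign, the only part where Dirichlet data can be imposed for the transport term; it is the class in which their Dirichlet problem is posed. Since $\nabla w=(\partial_nw)\,n_U$ on $\partial U$, your identity picks up the term $\int_{\partial U}\partial_nw\int_{\Rd}(v\cdot n_U)^2f\,d\gamma\,dS$. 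Here $f$ is nonzero on the other half of the boundary, and nothing in your argument controls this term.

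The missing ingredient is the outflow trace bound. For $f$ in the dense class,
\begin{equation*}
\int_{\partial U}\int_{\Rd}\abs{v\cdot n_U}f^2\,d\gamma\,dS
=2\,\abs*{\int_{U\times\Rd}f\,v\cdot\nabla_xf\,dx\,d\gamma}
\leq2\norm{v\cdot\nabla_xf}_{L^2(U;H^{-1}_\gamma)}\norm{f}_{L^2(U;H^1_\gamma)}\,.
\end{equation*}
Now test with $\phi=v\cdot\mathbf F$, where $\mathbf F(x)=x/d$, so that $\nabla\cdot\mathbf F=1$. Bounding the boundary term by Cauchy--Schwarz against the weight $\abs{v\cdot n_U}$ gives
\begin{equation*}
\abs{(f)_U}\leq C\snorm{f}_{H^1_\hyp(U)}+C\bigl(\snorm{f}_{H^1_\hyp(U)}\norm{f}_{L^2(U;H^1_\gamma)}\bigr)^{1/2}\,.
\end{equation*}
Combining this with the first estimate and Young's inequality, the square-root term is absorbed into $\norm{f}_{L^2(U;L^2_\gamma)}$. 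The resulting inequality then passes to all of $H^1_{\hyp,0}(U)$ by density.
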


The following lemma was essentially proved in \cite[Lemma 5.1]{AAMN}, but we use a slightly different statement, which is more explicit with the dependence in $r$. We therefore include the complete proof. 

\begin{lem}[Caccioppoli inequality]
\label{l.caccioppoli}
There exists a constant~$C(d,\Lambda)<\infty$ such that, for every $r>0$ and $f \in H^1_\hyp(B_{r})$ satisfying
\begin{equation}
\label{e.caccio.pde}
-\nabla_v\cdot\a\nabla_v f + v\cdot \a\nabla_v f - v\cdot \nabla_x f + \nabla H\cdot \nabla_v f = 0 \quad \mbox{in} \ B_r \times\Rd\,, 
\end{equation}
we have, for every $s\in (0,r)$, the estimate
\begin{equation*}
\left\llbracket f \right\rrbracket_{H^1_{\hyp}(B_{s}) }
\leq 
\frac{C}{r-s}
\left\| f - (f)_{B_{r}\setminus B_{s}} \right\|_{L^2(B_{r}\setminus B_{s};L^2_\gamma)}\,.
\end{equation*}
\end{lem}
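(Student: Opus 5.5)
The approach mirrors the time-dependent Caccioppoli inequality (\lref{tcaccioppoli}), with two changes: the time derivative is absent, and we subtract the constant $k:=(f)_{B_r\setminus B_s}$ before testing. Since constants solve \eref{caccio.pde}, $\tilde f := f-k$ is again a solution in $B_r\times\Rd$. Moreover $\nabla_v\tilde f=\nabla_v f$ and $v\cdot\nabla_x\tilde f = v\cdot\nabla_x f$, so $\snorm{f}_{H^1_\hyp(B_s)}=\snorm{\tilde f}_{H^1_\hyp(B_s)}$. It therefore suffices to prove the estimate for $\tilde f$, with the right-hand side $\norm{\tilde f}_{L^2(B_r\setminus B_s;L^2_\gamma)}$.

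\textbf{Step 1: energy estimate.} Choose a cutoff $\phi\in C^\infty_c(B_r)$ depending only on $x$, with $\1_{B_s}\le\phi\le\1_{B_r}$ and $\abs{\nabla\phi}\le C(r-s)^{-1}$. Then $\nabla\phi$ is supported in $B_r\setminus B_s$. Test \eref{weaksoldef2} with $g=\phi^2\tilde f\in L^2(B_r;H^1_\gamma)$ and use uniform ellipticity of $\a$ to obtain
\[
\int \phi^2\abs{\nabla_v\tilde f}^2\,dm\le -\int\phi^2\tilde f\,B\tilde f\,dm .
\]
The key structural fact is that $B=-v\cdot\nabla_x+\nabla H\cdot\nabla_v$ is antisymmetric in $L^2(dm)$. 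Indeed, $e^{-H}e^{-|v|^2/2}$ is invariant under the Hamiltonian flow of $\tfrac12|v|^2+H$. Consequently
\[
-\int\phi^2\tilde f B\tilde f\,dm=-\tfrac12\int\phi^2B(\tilde f^2)\,dm=\tfrac12\int\tilde f^2B(\phi^2)\,dm=-\int \tilde f^2\phi\, v\cdot\nabla\phi\,dm .
\]
This term contains a factor $v$, unlike the time-dependent computation in \lref{tcaccioppoli}. I would handle it with \lref{orlicz}, or rather by integrating by parts in $v$ in the other direction. One way: write $\tilde f^2 v\,\phi\nabla\phi$ via the Gaussian identity $\int v h\,d\gamma=\int\nabla_v h\,d\gamma$. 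This gives $\int 2\phi\tilde f\nabla\phi\cdot\nabla_v\tilde f\,dm$, which is the same term that appears in the paper's computation for \lref{tcaccioppoli}. Cauchy--Schwarz then absorbs half of it into the left-hand side and leaves $C(r-s)^{-2}\int_{B_r\setminus B_s}\tilde f^2\,dm$. Since $\sigma$ is comparable to Lebesgue measure, this yields
\[
\norm{\nabla_v f}_{L^2(B_s;L^2_\gamma)}\le \frac{C}{r-s}\norm{\tilde f}_{L^2(B_r\setminus B_s;L^2_\gamma)}.
\]

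\textbf{Step 2: the transport part.} To bound $\norm{v\cdot\nabla_x f}_{L^2(B_s;H^{-1}_\gamma)}$, use duality. For $g\in L^2(B_s;H^1_\gamma)$, extended by zero, test the equation with $g e^{H}$, exactly as in \lref{tcaccioppoli}:
\[
\abs*{\int g\,v\cdot\nabla_x f\,dx\,d\gamma}=\abs*{\int(\a\nabla_vg+g\nabla H)\cdot\nabla_v f\,dx\,d\gamma}\le C\norm{g}_{L^2(B_s;H^1_\gamma)}\norm{\nabla_v f}_{L^2(B_s;L^2_\gamma)}.
\]
This uses only the boundedness of $\a$ and of $\nabla H$, the latter being implicit in the standing assumptions and in \pref{nasharonson}. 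Taking the supremum and combining with Step 1 gives the claim.

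\textbf{Main obstacle.} The only delicate point is that the localized energy error involves only $\tilde f$ on the annulus, where $\nabla\phi\neq0$. This is what allows subtracting the annulus mean rather than a mean over all of $B_r$. A second subtlety is justifying the integration by parts for $B$ on functions that are only in $H^1_\hyp$, rather than smooth. For that I would appeal to the density of smooth functions in $H^1_\hyp$ established in \cite{AAMN}, or note that the weak formulation already contains exactly the pairing used.
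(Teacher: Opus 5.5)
Your proposal is correct and follows the paper's proof essentially step for step: subtract the annulus mean, test with $\phi^2 f$, integrate by parts to reach $-\int 2\phi f\nabla\phi\cdot\nabla_v f\,dm$, absorb half via Cauchy--Schwarz, then bound the transport term by duality after testing with $g e^{H}$. Your explicit two-step derivation of the middle identity is exactly what the paper compresses into its single ``integrating by parts'' line; those two steps are the antisymmetry of $B$ in $L^2(dm)$, followed by the Gaussian identity $\int v h\,d\gamma=\int\nabla_v h\,d\gamma$.
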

\begin{proof}
By subtracting a constant, we may suppose that $(f)_{B_{r}\setminus B_{s}} =0$.  
Following the proof of \cite[Lemma 5.1]{AAMN}, we select $\phi\in C^\infty_c(B_r)$ satisfying
$$
\1_{B_s}\leq \phi \leq \1_{B_r}
\quad 
\text{and}
\quad
\abs{\nabla\phi} \leq C(r-s)^{-1}\, .
$$
Testing \eqref{e.weaksoldef2} of the equation with $\phi^2 f$ and using the ellipticity of $\a$, we see that
\begin{equation}
\label{e.caccio.plugin}
\int_{B_r\times\Rd} \phi^2 \left| \nabla_vf \right|^2 \,dm
\leq 
- \int_{B_r\times\Rd} \phi^2 f Bf \,dm\,. 
\end{equation} 
Integrating by parts, the right hand side becomes
\begin{align*}
-\int_{B_r\times\Rd} \phi^2 f Bf \,dm &= -\int_{B_r\times\Rd} 2\phi f \nabla\phi\cdot \nabla_v f \,dm \\
&\leq \frac12\int_{B_r\times\Rd} \phi^2 \left| \nabla_vf \right|^2 \,dm + 2\int_{B_r\times\Rd} \left|\nabla\phi \right|^2 f^2 \,dm\,,
\end{align*}
where the last step follows by applying Cauchy's inequality. Combining this with \eref{caccio.plugin} and using the properties of $\phi$, 
\begin{equation}
\label{e.caccgradbnd}
\int_{B_s\times\Rd} \abs{ \nabla_vf}^2 \,dm
\leq 
\frac{C}{(r-s)^2} \int_{( B_{r}\setminus B_s ) \times\Rd} 
f^2\,dm\,. 
\end{equation}
This yields the desired estimate of $\left\| \nabla_v f \right\|_{L^2( B_s; L^2_\gamma)}$. To estimate $\left\| v\cdot \nabla_x f \right\|_{L^2(B_s; H^{-1}_\gamma)}$, we test \eqref{e.weaksoldef2} with $g \exp(H)$, for $g\in L^2(B_s;H^1_\gamma)$, and use \eqref{e.caccgradbnd} and the upper bound on $\a$ to see that
\begin{align*}
\left| \int_{B_s\times\Rd} g (v\cdot \nabla_x f) 
\,dx\,d\gamma \right| 
& =
\left| \int_{B_s\times\Rd} \left( \a\nabla_vg +g \nabla H \right) \cdot \nabla_vf
\,dx\,d\gamma \right| 
\\ & 
\leq C \left\| g \right\|_{L^2(B_s;H^1_\gamma)} 
\left\| \nabla_v f \right\|_{L^2(B_s;L^2_\gamma)}
\\ & 
\leq 
\frac{C}{r-s} \left\| g \right\|_{L^2(B_s;H^1_\gamma)} 
\left\| f \right\|_{L^2(B_{r}\setminus B_s;L^2_\gamma)}\,.
\end{align*}
Taking the supremum over $g\in L^2(B_s;H^1_\gamma)$ with $\left\| g \right\|_{L^2(B_s;H^1_\gamma)}\leq 1$ yields 
\begin{equation*}
\left\| v\cdot \nabla_x f \right\|_{L^2(B_s;H^{-1}_\gamma)} \leq \frac{C}{r-s}\left\| f \right\|_{L^2(B_{r}\setminus B_s;L^2_\gamma)}\,. 
\end{equation*}
Combining \eqref{e.caccgradbnd} and the previous line yields
\begin{align*}
\snorm{f}_{H^1_{\hyp}(B_{s}) }^2
&=
\left\| \nabla_v f \right\|_{L^2(B_s; L^2_\gamma)}^2
+
\left\| v\cdot \nabla_x f \right\|_{L^2(B_s;H^{-1}_\gamma)}^2\\
&\leq  \frac{C}{(r-s)^2}\left\| f \right\|^2_{L^2(B_{r}\setminus B_s;L^2_\gamma)}\,,
\end{align*}
completing the proof.
\end{proof}
We note that, for $f$ a solution of \eref{caccio.pde}, $\inprod{Bf}_\gamma = \inprod{\nabla^*_v\nabla_v f}_\gamma = 0$. In this case, we can solve for $\g\in L^2(U; L^2_\gamma)^d$ such that $\nabla_v^* \g = Bf$ and $\norm{\g}_{L^2(U; L^2_\gamma)} \leq \norm{Bf}_{L^2(U; H^{-1}_\gamma)}$. In particular, this now let's us write, for $0<c<C<\infty$, 
$$c\snorm{f}_{H^1_\hyp(U)} \leq \norm{\nabla_v f}_{L^2(U; L^2_\gamma)} + \norm{\g}_{L^2(U; L^2_\gamma)} \leq C\snorm{f}_{H^1_\hyp(U)}\,.$$
In later section, when we need to integrate the $x$ variable with respect to different measures, this will simplify the resulting notation.

Additionally, we need the following bound on the $x$ regularity for functions in $H^1_\hyp(U)$. This result is proved in \cite[Theorem 1.4]{AAMN} and we provide the statement here for reference.
\begin{prop}[{H\"ormander inequality for $H^1_\hyp$ \cite[Theorem 1.4]{AAMN}}]
\label{p.hypoelliptic.hormander}
Let $\alpha\in [0, \frac{1}{3})$, and let $U = \Rd, \Td$. There exists a constant $C = C(d)< \infty$ such that, for every $f\in H^1_\hyp(U)$, we have the estimate
$$\norm{f}_{H^\alpha(U; L^2_\gamma)} \leq C \norm{f}_{H^1_\hyp(U)} \,.$$
For $\alpha = \frac{1}{3}$, we have the estimate
$$\norm{f}_{Q^{\sfrac{1}{3}}_{\nabla_x}(U)} \leq C\norm{f}_{H^1_\hyp(U)}\,.$$
\end{prop}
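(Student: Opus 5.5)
The plan is to Fourier transform in $x$ and reduce the statement to an estimate at each fixed frequency. I treat $U=\Rd$; the case $U=\Td$ is identical with Fourier series $k\in\Zd$.

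\emph{Reduction to a fibrewise inequality.} Write $\hat f(k,v)$ for the Fourier transform in $x$, so that $v\cdot\nabla_x f$ becomes $i(k\cdot v)\hat f$. Since the $H^{-1}_\gamma$ and $H^1_\gamma$ norms act only in $v$, Plancherel gives
$$\norm{f}_{H^1_\hyp(U)}^2=\int \Bigl(\norm{\hat f(k,\cdot)}_{H^1_\gamma}^2+\norm{(k\cdot v)\hat f(k,\cdot)}_{H^{-1}_\gamma}^2\Bigr)\,dk .$$
The frequencies $\abs{k}\le 1$ are trivially controlled by $\norm{f}_{L^2(U;L^2_\gamma)}$. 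So it suffices to prove the following: for $\lambda\ge1$, a unit vector $e$, and $g\in H^1_\gamma$,
$$\lambda^{2/3}\norm{g}_{L^2_\gamma}^2\le C\Bigl(\norm{g}_{H^1_\gamma}^2+\lambda^2\norm{(e\cdot v)g}_{H^{-1}_\gamma}^2\Bigr), \qquad(\ast)$$
with $\lambda=\abs{k}$ and $e=k/\abs{k}$.

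\emph{Deducing the statement from $(\ast)$.} Multiplying $(\ast)$ by $\abs{k}^{2\alpha-2/3}\le1$ and integrating in $k$ gives the $H^\alpha$ bound for every $\alpha\le\frac13$. The strict inequality $\alpha<\frac13$ in the statement is only needed because $H^{1/3}$ is replaced by the Besov-type space $Q^{\sfrac13}_{\nabla_x}$ at the endpoint. I expect that norm to be a supremum or sum over dyadic frequency shells of exactly the quantity in $(\ast)$, so the endpoint statement follows from the same pointwise bound without summation loss. The scaling heuristic supports the exponent $\frac23$: a $g$ concentrated in the slab $\abs{e\cdot v}<\delta$ with $\nabla_v g\sim\delta^{-1}g$ has $\norm{(e\cdot v)g}_{H^{-1}_\gamma}\sim\delta^2\norm g$. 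The two terms in $(\ast)$ then balance at $\delta=\lambda^{-1/3}$.

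\emph{Proof of $(\ast)$: first step.} Set $A=\norm{g}_{H^1_\gamma}$ and $B=\lambda\norm{(e\cdot v)g}_{H^{-1}_\gamma}$. Only the variable $s=e\cdot v$ matters, since the weight factorizes as $\gamma=\gamma_1(s)\otimes\gamma_{d-1}$. Let $\delta=\lambda^{-1/3}$ and mollify in $s$ at scale $\delta$: $g=g_\delta+r$ with $\norm{r}_{L^2_\gamma}\le C\delta A$. The Gaussian weight is harmless after localizing $\abs{v}\lesssim\lambda^{\epsilon}$; the tails are handled by \lref{orlicz}, with the bound $\norm{vg}\lesssim\norm{g}_{H^1_\gamma}$.

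\emph{Proof of $(\ast)$: the mollified part, via the commutator.} This is H\"ormander's structure: with $X=\partial_s$ and $T=i\lambda s$, one has $[X,T]=i\lambda$. Testing $Tg$ against the function $\lambda^{-1}\partial_s g_\delta$ (suitably symmetrized) should give
$$\norm{g_\delta}^2=\lambda^{-1}\langle [X,T]g_\delta,g_\delta\rangle\lesssim \lambda^{-1}\Bigl(\norm{Tg}_{H^{-1}_\gamma}\norm{\partial_s g_\delta}_{H^1_\gamma}+\norm{Tg}_{H^{-1}_\gamma}\norm{g_\delta}_{H^1_\gamma}+\text{commutator of }T\text{ with mollifier}\Bigr).$$
Here $\norm{\partial_s g_\delta}_{H^1_\gamma}\lesssim\delta^{-1}A$, and the mollifier commutator $[s,\rho_\delta *]$ costs a factor $\delta$, which is again bounded by $\delta A$. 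This yields $\norm{g_\delta}^2\lesssim\lambda^{-1}\delta^{-1}AB+\delta^2A^2$. Combining with the bound on $r$ and inserting $\delta=\lambda^{-1/3}$ gives $\norm g^2\lesssim\lambda^{-2/3}(A^2+B^2)$, which is $(\ast)$.

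\emph{Main obstacle.} The hard step is exactly this commutator estimate. It must lose only one power of $\delta^{-1}$ and must not use more than $H^1_\gamma$ regularity of $g$. A naive splitting into $\abs{s}<\delta$ and $\abs{s}>\delta$ with the test function $g/s$ loses too much and only yields exponent $\frac25$. If the mollifier version proves awkward, I would fall back on a dyadic decomposition in the $s$-Fourier variable. There one interpolates $\norm{(e\cdot v)g}_{H^{-1}}$, which controls $\lambda^{-1}$ times the low $s$-frequencies, against $\norm{\partial_s g}$, which controls the high ones, cutting at frequency $\lambda^{1/3}$.
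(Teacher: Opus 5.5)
\textbf{Comparison with the cited proof.} The paper gives no argument for this proposition. It is quoted from \cite[Theorem 1.4]{AAMN}, and the form of the statement there suggests a real-variable, H\"ormander-type argument working directly with translations in $x$: Sobolev regularity is claimed only for $\alpha<\frac13$, and the endpoint appears only in the difference-quotient space $Q^{1/3}_{\nabla_x}$.

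Your route is genuinely different. Plancherel in $x$ reduces everything to the one-frequency bound $(\ast)$, which you attack by mollifying in $s=e\cdot v$ at scale $\delta=\lambda^{-1/3}$ and using $[\partial_s,s]=1$. The architecture is sound and proves more than is stated, namely $\|f\|_{H^{1/3}(U;L^2_\gamma)}\le C\|f\|_{H^1_\hyp(U)}$.

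The endpoint then needs no guess about dyadic shells. Since $|e^{ik\cdot h}-1|\le 2|k|^{1/3}|h|^{1/3}$, this bound controls $|h|^{-1/3}\|f(\cdot+h)-f\|_{L^2(U;L^2_\gamma)}$. That is the only way $Q^{1/3}_{\nabla_x}$ is used in the paper (unit-step differences in the proof of \lref{fdbound}).

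The price of Fourier analysis is that it ties you to translation invariance in $x$. That is harmless here, since $U=\Rd,\Td$ and the paper applies the estimate to cut-off functions on $\Rd$. The real-variable argument instead yields finite-difference bounds directly and adapts more easily to localized or time-dependent variants.

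\textbf{The gap: the Gaussian weight.} As written, your proof of $(\ast)$ does not close, because the Gaussian weight is not harmless.
\begin{itemize}
\item In $L^2_\gamma$ the commutator identity reads $\|g\|_{L^2_\gamma}^2=-2\langle sg,\partial_sg\rangle_{L^2_\gamma}+\|sg\|_{L^2_\gamma}^2$. The last term is of size $A^2$ rather than $\lambda^{-2/3}A^2$, and it has the wrong sign.
\item Localizing to $|v|\lesssim\lambda^{\epsilon}$ does not rescue this. \lref{orlicz} only makes the discarded tail $O(\lambda^{-\epsilon}A)$, whereas $(\ast)$ needs $O(\lambda^{-1/3}A)$, and on $|s|\lesssim\lambda^{1/3}$ the weight varies enormously.
\item For the same reason, $\|g-g_\delta\|_{L^2_\gamma}\le C\delta A$ fails in general: a shift by $\delta$ distorts $e^{-s^2/2}$ by factors up to $e^{\delta|s|}$.
\end{itemize}

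\textbf{The fix: conjugate the weight away in $s$.} Put $\tilde g=e^{-s^2/4}g$. Up to a normalizing constant, $\|g\|_{L^2_\gamma}=\|\tilde g\|_{L^2(ds\,d\gamma_{d-1})}$. Moreover $\|\partial_s\tilde g\|+\|s\tilde g\|+\|\nabla_{v'}\tilde g\|\le CA$, because $\|sg\|_{L^2_\gamma}^2\le 2\|g\|_{L^2_\gamma}^2+4\|\partial_sg\|_{L^2_\gamma}^2$.

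Now mollify $\tilde g$ instead of $g$, with $\rho$ even, and proceed as follows.
\begin{enumerate}
\item Use the unweighted identity $\|\tilde g_\delta\|^2=-2\langle s\tilde g_\delta,\partial_s\tilde g_\delta\rangle$.
\item Write $s\tilde g_\delta=\rho_\delta*(s\tilde g)+\delta\,\eta_\delta*\tilde g$ with $\eta(u)=u\rho(u)$.
\item Pair the first piece with $sg$ in the $\gamma$-duality against $\Phi=e^{s^2/4}\rho_\delta*\rho_\delta*\partial_s\tilde g$, which satisfies $\|\Phi\|_{H^1_\gamma}\le C\delta^{-1}A$. The only non-obvious part of this bound is the moment $\|s\,(\rho_\delta*\rho_\delta)'*\tilde g\|$. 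Handle it by commuting $s$ through the convolution: the kernel $u\,(\rho_\delta*\rho_\delta)'(u)$ has $L^1$-norm $O(1)$.
\item The second piece (the mollifier commutator) contributes $\delta A\|g\|$, not $\delta^2A^2$. Absorb it using Young's inequality together with $\|g\|\le\|\tilde g_\delta\|+C\delta A$.
\end{enumerate}
This gives $\|g\|^2\le C(\lambda^{-1}\delta^{-1}AB+\delta^2A^2)$, and the choice $\delta=\lambda^{-1/3}$ yields $(\ast)$.
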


\section{Correctors and Heterogenous Polynomials}
The purpose of this section is to define the space of heterogenous polynomials $\A_m$ as well as provide several bounds that will be useful in the following sections. 

\subsection{Deriving the Correctors}
First, we discuss the existence and bounds for the correctors in this case. While we have so far been working with an unscaled version of the operator $L$, for the derivation of the correctors, it is a bit simpler to instead look at a scaled version of this operator, given by
$$L^\ep f = -\frac{1}{\ep^2} \nabla_v\cdot\a\nabla_v f + \frac{1}{\ep^2}v\cdot\a\nabla_v f -\frac{1}{\ep}v\cdot \nabla_x f + \frac{1}{\ep^2}\nabla H\inp*{\tfrac{x}{\ep}} \cdot\nabla_v f\,.$$
By looking at this operator in the theatrical scaling, it becomes much simpler to compare terms of the same order and deduce the form the correctors should take. We begin with a relatively informal derivation to provide some motivation for the following existence results.

In the spirit of the elliptic case, we take a function of the form
$$ w^\ep(x, v) = \sum_{n=0}^m \sum_{\abs{\alpha} = n} \ep^n \partial^\alpha f(x)  \phi_{\alpha}(\tfrac{x}{\ep}, v)\,,$$
where $\alpha\in\N^d$ is a multiindex, $\phi_0 = 1$, and the remaining $\phi_\alpha$ will be defined in the forthcoming lines. Applying the operator $L^\ep$ to $w^\ep$, we see that
\begin{align*}
L^\ep w^\ep &= \sum_{n=0}^m\sum_{\abs{\alpha}=n} \ep^{n-2} \partial^\alpha f L\phi_{\alpha} - \ep^{n-1} \phi_\alpha (v\cdot \nabla_x)\partial^\alpha f \\
&= \sum_{n=0}^m\sum_{\abs{\alpha} = n}\sum_{j=1}^d \ep^{n-2} \partial^\alpha f (L\phi_\alpha - v_j \phi_{\alpha-e_j}) +\sum_{\abs{\alpha} = m+1} \sum_{j=1}^d v_j \phi_{\alpha-e_j} \partial^{\alpha}f \,,
\end{align*}
where we set $\phi_{-e_j}=0$. This suggests that we should inductively construct the $\phi_{\alpha}$ by solving 
$$L\phi_{\alpha} = \sum_{j=1}^dv_j \phi_{\alpha-e_j} - \inprod{v_j \phi_{\alpha-e_j}}\,,$$
and set 
$$\ahom_{\alpha} = -\sum_{j=1}^d\inprod{v_j \phi_{\alpha-e_j}}\,,$$
and $\ahom_{\alpha} = 0$ for $\abs{\alpha}\leq 1$. We now prove the existence of the $\phi_\alpha$ as well as some basic bounds which will be used in the following sections.

\begin{lem}
\label{l.correctorexist}
For each $k\in\N$ and $\alpha\in\Nd$ such that $\abs{\alpha} = k$, there exists $\phi_\alpha\in H^1_\hyp(\Td)$ and $\ahom_\alpha$ such that for any $k^{\text{th}}$ degree polynomial, $p(x)$, we have 
\begin{equation}
\label{e.correctoreval}
\sum_{\abs{\alpha}\leq k}L\inp*{ \phi_{\alpha} \partial^\alpha p} = \sum_{\abs{\alpha}\leq k} \ahom_\alpha \partial^\alpha p\,.
\end{equation}
Additionally, there exists some $C(d, \Lambda)< \infty$ such that
\begin{equation}
\label{e.correctorbound}
\max_{\abs{\alpha} = k} \inp*{\norm{\phi_\alpha}_{H^1_\hyp(\Td)} + \abs{\ahom_\alpha}} \leq C^k\,.
\end{equation}
\end{lem}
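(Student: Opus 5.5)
We build the correctors by induction on $k=\abs{\alpha}$, solving a cell problem on the torus at each step. Set $\phi_0=1$ and $\ahom_\alpha=0$ for $\abs{\alpha}\le 1$. For $\abs{\alpha}=k\ge1$, suppose all $\phi_\beta$ with $\abs{\beta}<k$ have been constructed in $H^1_\hyp(\Td)$. Define the right-hand side
\[
F_\alpha:=\sum_{j=1}^d \inp*{v_j\phi_{\alpha-e_j}-\inprod{v_j\phi_{\alpha-e_j}}},
\]
where $\inprod{\cdot}$ is the average with respect to $dm$ on $\Td\times\Rd$. Then $\phi_\alpha$ is defined as the solution of $L\phi_\alpha=F_\alpha$ on $\Td$, normalized by $\inprod{\phi_\alpha}=0$.

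To solve this cell problem we use the well-posedness theory of \cite{AAMN} on the torus. The kernel of $L$ in $H^1_\hyp(\Td)$ consists of constants: testing $Lu=0$ against $u$ gives $\nabla_v u=0$, and the equation then forces $v\cdot\nabla_x u=0$ with $u=u(x)$, so $u$ is constant. Combined with the Poincar\'e inequality (\pref{hypoelliptic.poincare}, in its torus version), this gives a unique mean-zero solution whenever $\inprod{F_\alpha}=0$, which holds by construction. The solution satisfies
\[
\norm{\phi_\alpha}_{H^1_\hyp(\Td)}\le C\norm{F_\alpha}_{L^2(\Td;H^{-1}_\gamma)}.
\]
We need $F_\alpha$ to lie in $L^2(\Td;H^{-1}_\gamma)$. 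By \lref{orlicz}, $\norm{v_j\phi_{\alpha-e_j}}_{L^2(\Td;L^2_\gamma)}\le C\norm{\phi_{\alpha-e_j}}_{L^2(\Td;H^1_\gamma)}$. Hence
\[
\norm{\phi_\alpha}_{H^1_\hyp}\le C_0\sum_j\norm{\phi_{\alpha-e_j}}_{H^1_\hyp}\le C_0 d\max_{\abs{\beta}=k-1}\norm{\phi_\beta}_{H^1_\hyp},
\]
and likewise $\abs{\ahom_\alpha}\le \sum_j\norm{v_j\phi_{\alpha-e_j}}_{L^2}\le C\max_{\abs\beta=k-1}\norm{\phi_\beta}$. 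Iterating yields \eref{correctorbound} with $C=C_0d+C$.

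For \eref{correctoreval}, we use that $L(gh)$ for $g=\partial^\alpha p(x)$ independent of $v$ reduces to $gL\phi-\phi\,v\cdot\nabla_x g$; the $v$-derivatives do not act on $g$, and $L$ contains no $x$-derivatives other than $-v\cdot\nabla_x$. Summing over $\abs{\alpha}\le k$, and noting $v\cdot\nabla_x\partial^\alpha p=\sum_j v_j\partial^{\alpha+e_j}p$, we reindex exactly as in the formal computation above. This gives
\[
\sum_{\abs\alpha\le k}\partial^\alpha p\inp*{L\phi_\alpha-\sum_j v_j\phi_{\alpha-e_j}}-\sum_{\abs\alpha=k+1}\sum_j v_j\phi_{\alpha-e_j}\partial^\alpha p .
\]
The last sum vanishes because $p$ has degree $k$. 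The bracket equals $-\sum_j\inprod{v_j\phi_{\alpha-e_j}}=\ahom_\alpha$ by the cell problem, which gives \eref{correctoreval}. For $\alpha=0$ the term is $L1=0$, and for $\abs\alpha=1$ we have $L\phi_{e_i}=v_i$ with $\inprod{v_i}=0$, consistent with $\ahom_\alpha=0$ there.

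The main obstacle is the torus solvability with a constant independent of $\alpha$. We must verify that the $H^1_\hyp$ solution theory of \cite{AAMN} (existence via the variational/Lax–Milgram-type argument, plus the Poincar\'e inequality on $\Td$) applies to periodic $x$ with the drift term $\nabla H\cdot\nabla_v$, and that the Fredholm condition is exactly $\inprod{F}=0$ with respect to $dm$. The latter uses that $dm$ is invariant: $\int Bf\,dm=0$ and $\int\nabla_v^*\a\nabla_v f\,dm=0$. If a direct invertibility statement is not available, we would first solve $(\lambda+L)\phi=F$ and send $\lambda\to0$ using the Poincar\'e bound to get a uniform estimate.
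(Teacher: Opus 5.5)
Your proposal is correct and follows the paper's proof essentially step for step: the same inductive construction of $\phi_\alpha$ via the mean-zero cell problem on $\Td$ (the paper simply cites \cite[Theorem 1.2]{AAMN} for solvability), the same bound via \lref{orlicz}, and the same reindexing computation for \eref{correctoreval}. Your extra remarks spell out what the paper takes from \cite{AAMN} without changing the argument: that the kernel of $L$ consists of constants, that the compatibility condition $\inprod{F_\alpha}=0$ holds because $dm$ is invariant, and that one can fall back on solving $(\lambda+L)\phi=F$ and letting $\lambda\to0$.
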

\begin{proof}
Following the derivation above, we set $\phi_0 = 1$ and proceed inductively. Suppose, for some $k\in\N$, we have defined $\phi_\alpha$ for $\abs{\alpha} \leq k-1$ satisfying the required properties. We then define $\phi_\alpha$ with $\abs{\alpha} =k$ as the solution to the following equation
$$
\begin{cases}
-\nabla_v\cdot\a\nabla_v \phi_{\alpha} +v\cdot\a\nabla_v\phi_{\alpha}  - v\cdot\nabla_x\phi_{\alpha} + \nabla H \cdot\nabla_v \phi_{\alpha} = \sum_{j=1}^d v_j \phi_{\alpha-e_j} - \inprod{v_j\phi_{\alpha-e_j}} & \text{ in } \Td\times\Rd \\
\inprod{\phi_{\alpha}} = 0\,,
\end{cases}
$$
where we set $\phi_{\alpha - e_j} := 0$ if $\alpha\cdot e_j = 0$
Here, we can apply \cite[Theorem 1.2]{AAMN} to conclude that there exists a unique solution $\phi_\alpha\in H^1_\hyp(\Td)$ to the above equation. Furthermore, we have that 
\begin{align*}
\norm{\phi_\alpha}_{H^1_\hyp(\Td)} &\leq C\sum_{j=1}^d \norm{v_j \phi_{\alpha - e_j}}_{L^2(\Td; L^2_\gamma)} \\
&\leq C\sum_{j=1}^d \norm{ \phi_{\alpha - e_j}}_{L^2(\Td; H^1_\gamma)} \\
&\leq Cd C^{k-1} \leq C^k\,,
\end{align*}
for some $C(d, \Lambda) < \infty$, where we used \lref{orlicz} to pass from the first to the second line. Similarly, using the constructed $\phi_\alpha$, we define 
\begin{equation}
\label{e.ahomdef}
\ahom_{\alpha} = -\sum_{j=1}^d\inprod{v_j \phi_{\alpha-e_j}}\,.
\end{equation}
Following the same inductive argument presented above, we can prove the bounds for the $\ahom_\alpha$, completing the first part of the proof. 

Finally, we can compute, since $p$ has degree $k$,
\begin{align*}
\sum_{\abs{\alpha}\leq k}L\inp*{ \phi_{\alpha} \partial^\alpha p} &= \sum_{\abs{\alpha}\leq k} \partial^\alpha p L\phi_\alpha + \phi_\alpha L\partial^\alpha p \\
&= \sum_{\abs{\alpha}\leq k}\ahom_\alpha\partial^\alpha p + \sum_{j=1}^d v_j \phi_{\alpha - e_j} \partial^\alpha p - \phi_\alpha v\cdot\nabla_x\partial^\alpha p \\
&=\sum_{\abs{\alpha}\leq k}\ahom_\alpha\partial^\alpha p + \sum_{\abs{\alpha}=k} \sum_{i=1}^d\phi_\alpha v_i\partial^{\alpha+e_i}p \\
&= \sum_{\abs{\alpha}\leq k}\ahom_\alpha\partial^\alpha p\,,
\end{align*}
concluding the proof.
\end{proof}

\subsection{The Macroscopic Operator}
In this section, we define the macroscopic operator and prove that it is invertible on sufficiently large scales.

To begin, we first define several polynomial spaces that we will make use of throughout this section. Let $\P^*_m$ denote the set of polynomials on $\Rd$ of consisting only of terms of degree $m$. That is
$$\P^*_m = \insb*{p:\Rd\to \R \text{ ; } p(x) = \sum_{\abs{\alpha}= m} c_\alpha x^\alpha \text{ for } \alpha\in\Nd \text{ and } c_\alpha\in\R}\,.$$
We define $\P_m$ as 
$$\P_m = \insb*{p:\Rd\to \R \text{ ; } p(x) = \sum_{k=0}^m p_k \text{ for } p_k\in\P_k^*}$$
and let $\P = \bigcup_{m=0}^\infty \P_m.$
Furthermore, we define the set of harmonic polynomials, $\H$, by 
$$\H = \{p\in\P : \Delta p = 0\}.$$
Similarly, we define, for $m\in\N$ the spaces $\H_m = \H\cap \P_m$ and $\H^*_m = \H\cap \P^*_m$. Now, we consider the Laplacian as an operator on the space of polynomials. We begin by defining an inner product on $\P$ by 
$$\inprod{p,q}_{\P_r} = \sum_{\alpha\in\Nd}\frac{r^{2\abs{\alpha}}}{\alpha!} \partial^\alpha p(0) \partial^\alpha q(0)\,,$$
where $\alpha! = \prod_{i=1}^d (\alpha_i!)$ and $r>0$.
In particular, we can compute for $\alpha, \beta\in\Nd$, $\inprod{x^\alpha, x^\beta}_{\P_r} = \alpha!r^{2\abs{\alpha}} \1_{\alpha =\beta}$. From this, we can also compute 
\begin{align*}
\inprod{\abs{x}^2 x^\alpha, x^\beta}_{\P_r} &= \sum_{i=1}^d \inprod{x^{\alpha + 2e_i}, x^\beta}_{\P_r} \\
&= \sum_{i=1}^d \beta!r^{2\abs{\beta}}\1_{\alpha+2e_i = \beta} \\
&= \sum_{i=1}^d r^{4}\beta_i(\beta_i-1)\alpha!r^{2\abs{\alpha}} \1_{\alpha = \beta -2e_i} \\
&=\sum_{i=1}^d \inprod{x^{\alpha}, r^{4}\partial_{x_i}^2 x^\beta}_{\P_r} \\ 
&= \inprod{x^\alpha, r^{4}\Delta x^\beta}_{\P_r}\,.
\end{align*}
Therefore, we see that $r^{4}\Delta$ and multiplication by $\abs{x}^2$ are adjoints with respect to this inner product. In particular, as $r^4\Delta$ and $\Delta$ have the same kernel for $r>0$, it follows that $\P^*_m = \H^*_m \oplus \abs{x}^2\P^*_{m-2}$. Applying this inductively, we then see that 
\begin{equation}
\label{e.polydecomp}
\P^*_m = \bigoplus_{k=0}^{\floor{\sfrac{m}{2}}}\abs{x}^{2k}\H^*_{m-2k} \,.
\end{equation}
Now, for $p_k\in \H^*_{m-2k}$, we can compute that
$$\Delta(\abs{x}^{2k+2}p_k) = (2k+2)(d+2m-2k)\abs{x}^{2k}p_k\,.$$
From this computation and \eref{polydecomp}, we define the operator $S: \P\to \P$ by 
$$S(\abs{x}^{2k}p_k) = b_{k}^{-1}\abs{x}^{2k+2} p_k\,,$$
where $b_{k} = (2k+2)(d+2m-2k)$, and extend to $\P$ by linearity. In particular, the above computation shows that
$$\Delta Sp = p\,.$$
Combining the above computations, we see that $q = Sp$ is the unique polynomial such that $\Delta q = p$ and $q$ is orthogonal to all harmonic polynomials with respect to our inner product on $\P$. Additionally, we have the following bound on $Sp$.

\begin{lem}
\label{l.invlapbound}
There exists some $C(d)<\infty$ such that for $m\in \N$, $p\in\P^*_m$, and $r>0$, 
\begin{equation}
\label{e.invlapbound}
\norm{Sp}_{\P_r} \leq \frac{r^2}{\sqrt{m+1}}\norm{p}_{\P_r}\,.
\end{equation}
\end{lem}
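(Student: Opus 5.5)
The plan is to diagonalize $S$ along the decomposition \eref{polydecomp} and use the adjointness of multiplication by $\abs{x}^2$ and $r^4\Delta$ to compute the norm of $S$ on each piece exactly. For $p\in\P^*_m$, write $p=\sum_{k=0}^{\floor{m/2}}\abs{x}^{2k}p_k$ with $p_k\in\H^*_{m-2k}$. Then $Sp=\sum_k b_k^{-1}\abs{x}^{2k+2}p_k$, where $b_k=(2k+2)(d+2m-2k)$ and $m$ is the degree of $p$.

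\emph{Step 1 (orthogonality).} I will check that the summands $\abs{x}^{2k}p_k$ are mutually orthogonal in $\inprod{\cdot,\cdot}_{\P_r}$, and likewise the summands $\abs{x}^{2k+2}p_k$ of $Sp$ (these all lie in $\P^*_{m+2}$). Take $j<k$, $h_j\in\H^*_{m-2j}$ and $h_k\in\H^*_{m-2k}$. Using the adjointness $j$ times,
\[
\inprod{\abs{x}^{2j}h_j,\abs{x}^{2k}h_k}_{\P_r}=r^{4j}\inprod{h_j,\Delta^j(\abs{x}^{2k}h_k)}_{\P_r}.
\]
Iterating the identity $\Delta(\abs{x}^{2\ell}h)=c_\ell\abs{x}^{2\ell-2}h$ for harmonic homogeneous $h$ shows that $\Delta^j(\abs{x}^{2k}h_k)$ is a multiple of $\abs{x}^{2(k-j)}h_k$ with $k-j\geq1$. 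Applying adjointness once more turns the pairing into $r^4\inprod{\Delta h_j,\cdot}_{\P_r}$, and this vanishes because $h_j$ is harmonic. The same argument, with the exponents shifted by one, gives the orthogonality of the summands of $Sp$. Hence
\[
\norm{p}_{\P_r}^2=\sum_k\norm{\abs{x}^{2k}p_k}_{\P_r}^2,\qquad \norm{Sp}_{\P_r}^2=\sum_k b_k^{-2}\norm{\abs{x}^{2k+2}p_k}_{\P_r}^2 .
\]

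\emph{Step 2 (exact norm on each piece).} Set $q=\abs{x}^{2k}p_k$. By adjointness and the computation $\Delta(\abs{x}^{2k+2}p_k)=b_kq$ from the text,
\[
\norm{\abs{x}^2q}_{\P_r}^2=\inprod{q,r^4\Delta(\abs{x}^2q)}_{\P_r}=r^4b_k\norm{q}_{\P_r}^2 .
\]
Therefore $\norm{S q}_{\P_r}^2=b_k^{-2}\,r^4b_k\norm{q}_{\P_r}^2=\frac{r^4}{b_k}\norm{q}_{\P_r}^2$.

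\emph{Step 3 (lower bound on $b_k$).} Since $k\leq m/2$, we have $d+2m-2k\geq d+m\geq m+1$, and also $2k+2\geq 2$. Hence $b_k\geq 2(m+1)\geq m+1$. Summing over $k$ gives
\[
\norm{Sp}_{\P_r}^2\leq \frac{r^4}{m+1}\norm{p}_{\P_r}^2,
\]
which is \eref{invlapbound} after taking square roots; the argument in fact gives the slightly better constant $\frac{r^2}{\sqrt{2(m+1)}}$.

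The only delicate point is the orthogonality in Step 1, which relies on carefully iterating the adjointness relation. Everything else is direct computation from identities already established in the text.
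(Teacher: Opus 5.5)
Your proposal is correct and follows essentially the same route as the paper: decompose $p$ via \eref{polydecomp}, use orthogonality and the adjointness of multiplication by $\abs{x}^2$ and $r^4\Delta$ to obtain $\norm{Sp}_{\P_r}^2=\sum_k r^4 b_k^{-1}\norm{\abs{x}^{2k}p_k}_{\P_r}^2$, and then bound $b_k$ from below. The only differences are that you verify the orthogonality of the summands explicitly, which the paper simply asserts, and that you use the cruder bound $b_k\geq 2(m+1)$ instead of the paper's $b_k\geq 4m+2d$; both bounds suffice.
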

\begin{proof}
Following \eref{polydecomp}, we can write 
$$p = \sum_{k=0}^{\floor{\sfrac{m}{2}}} \abs{x}^{2k} p_k\,,$$
for $p_k \in \H^*_{m-2k}$. Then, using that this decomposition is orthogonal, we see that
\begin{align*}
\norm{Sp}^2_{\P_r} &= \sum_{k=0}^{\floor{\sfrac{m}{2}}} b_{k}^{-2}\inprod{\abs{x}^{2k+2} p_k, \abs{x}^{2k+2} p_k}_{\P_r} \\
&= \sum_{k=0}^{\floor{\sfrac{m}{2}}} r^{4} b_{k}^{-1}\inprod{\abs{x}^{2k} p_k, b_{k}^{-1}\Delta(\abs{x}^{2k+2} p_k)}_{\P_r} \\ 
&= \sum_{k=0}^{\floor{\sfrac{m}{2}}} r^{4}b_{k}^{-1}\inprod{\abs{x}^{2k} p_k, \abs{x}^{2k} p_k}_{\P_r} \\
&\leq \frac{r^{4}}{4m+2d} \norm{p}_{\P_r}^2\,.
\end{align*}
Taking the square root and simplifying the denominator in the prefactor then gives the desired bound.
\end{proof}

Following \eref{correctoreval}, we define the macroscopic operator $\mc{A}:\P\to \P$ by
$$\mc{A}q = \sum_{\alpha\in\Nd} \ahom_\alpha \partial^\alpha q\,.$$
Note that, for $\abs{\alpha}\leq 1$, $\ahom_\alpha= 0$, so for $q\in \P_m$, $\mc{A}q \in \P_{m-2}$. 
We now want to show that this mapping is invertible. To do this, we will treat $\mc{A}$ as a perturbation of $\Delta$, and inductively use the bound in \lref{invlapbound} to derive a bound on the inverse, provided $r$ is sufficiently large.
\begin{lem}
\label{l.invertmacroop}
There exists $C(d,\Lambda)<\infty$ such that, for every $m\in\N$, and $p\in\P_m$ there exists $q\in \P_{m+2}$ satisfying
$$
\mc{A}q = p 
$$
and, for $r\geq Cm$,
$$\norm{q}_{\P_r} \leq Cr^2\norm{p}_{\P_r}\,.$$
In particular, we have that, for $k\in \{0, 1, \ldots, m\}$ and $r\geq Cm$,
\begin{equation*}
\abs{\nabla^{k+2} q(0)} \leq \sum_{n=k}^{m} r^{n-k} \abs{\nabla^n p(0)}\,.
\end{equation*}
\end{lem}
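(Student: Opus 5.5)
We treat $\mc{A}$ as a perturbation of the constant-coefficient second order operator and invert it by a Neumann-type recursion on degrees. Write $\mc{A} = \mc{A}_2 + \mc{R}$, where $\mc{A}_2 q= \sum_{\abs{\alpha}=2}\ahom_\alpha\partial^\alpha q = \nabla\cdot\ahom\nabla q$ and $\mc{R} = \sum_{\abs{\alpha}\geq 3}\ahom_\alpha \partial^\alpha$. The first step is to note that $\ahom$ (the symmetric part of the matrix $-\inprod{v_j\phi_{e_i}}$) is positive definite. This follows by testing the corrector equation for $\phi_{e_i}$ against $\phi_{e_j}$, since the antisymmetric part of $B$ drops out and only $\int \nabla_v\phi\cdot\a\nabla_v\phi\,dm$ survives. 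By a linear change of variables $x\mapsto \ahom^{-1/2}x$, $\mc{A}_2$ becomes $\Delta$. This rescales $\norm{\cdot}_{\P_r}$ only up to factors $C^{\deg}$, which at degree $\sim m$ would be fatal. I would therefore instead keep the change of variables fixed once and for all: define $\tilde S = T^{-1} S T$ with $T q = q\circ \ahom^{1/2}$, and use a version of \lref{invlapbound} for $\tilde S$ in the norm $\norm{q}_{\tilde\P_r}:=\norm{Tq}_{\P_r}$. Then $\mc{A}_2\tilde S p = p$, and $\norm{\tilde S p}_{\tilde \P_r}\le r^2(m+1)^{-1/2}\norm{p}_{\tilde\P_r}$ for $p$ homogeneous of degree $m$.

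Next, build $q$ degree by degree, from the top down. Decompose $p = \sum_{k=0}^m p_k$ with $p_k\in\P^*_k$. Set $q_{m+2} = \tilde S p_m$, and for $j$ decreasing define
$$ q_{j+2} = \tilde S\Big(p_j - \sum_{n\geq 3}\sum_{\abs{\alpha}=n} \ahom_\alpha \partial^\alpha q_{j+n}\Big),$$
so that $q=\sum_j q_{j+2}$ satisfies $\mc{A} q = p$ exactly (a finite triangular system). For homogeneous $u$ of degree $\ell$, each term of $\mc{R}$ satisfies
$$\norm{\partial^\alpha u}_{\P_r}\le \ell^{\abs{\alpha}/2}\, r^{-\abs{\alpha}} \cdot C^{\abs{\alpha}}\norm{u}_{\P_r}.$$
This comes from the explicit formula $\inprod{x^\beta,x^\beta}_{\P_r}=\beta!r^{2\abs{\beta}}$: differentiating $x^\beta$ in $x_i$ costs $\beta_i r^{-2}$ in squared norm relative to $\beta!$. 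Combining this with $\abs{\ahom_\alpha}\le C^{\abs{\alpha}}$ from \eref{correctorbound}, and counting the $\le d^n$ multiindices of length $n$, the $n$-th order correction contributes at most $(C\sqrt{m}/r)^n r^{n}$-type factors. More precisely, in terms of the weighted quantities $a_j := r^{-j}\norm{q_j}$, one gets
$$a_{j+2}\le r^{-j}\norm{p_j}+\sum_{n\ge3}(C m/r)^{n-2}a_{j+n}.$$
For $r\ge Cm$ with $C$ large, the geometric factor is $\le 1/2$, and induction gives $a_{j+2}\lesssim \sum_{n\ge j} 2^{-(n-j)}r^{-n}\norm{p_n}$. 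Summing yields $\norm{q}_{\P_r}\le Cr^2\norm{p}_{\P_r}$.

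The pointwise derivative bound follows from the same recursion read at the origin. Since $\nabla^{k+2}q(0)$ only sees $q_{k+2}$, and $\abs{\nabla^{k+2}q_{k+2}(0)}$ is comparable to $r^{-(k+2)}\norm{q_{k+2}}_{\P_r}$ up to factorial weights, the triangular recursion expresses it as a combination of $\nabla^n p(0)$ for $n\ge k$ with coefficients $\le r^{n-k}$ times a summable geometric factor. With the constant absorbed by taking $C$ large in $r\ge Cm$, this gives the stated inequality.

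The main obstacle is the derivative estimate for $\mc{R}$: one must track how factorials $\beta!$ interact with differentiation, so as to get a loss of $m^{\abs{\alpha}}$ rather than $m^{\abs{\alpha}}\cdot(\text{factorial})$, and so that the threshold is $r\ge Cm$ rather than something worse. I would first verify the monomial computation $\norm{\partial^\alpha x^\beta}^2_{\P_r}/\norm{x^\beta}^2_{\P_r} = r^{-2\abs{\alpha}}\beta!/(\beta-\alpha)!\le (m/r^2)^{\abs{\alpha}}$, and use orthogonality of monomials to extend it to general $u$. If the loss turns out to be $m/r$ per derivative rather than $\sqrt m/r$, the condition $r\ge Cm$ is still exactly what closes the geometric series.
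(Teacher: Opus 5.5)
Your strategy is the paper's: invert the second-order part with $S$, treat $\sum_{\abs{\alpha}\ge3}\ahom_\alpha\partial^\alpha$ perturbatively, and close with the monomial bound $\|\partial^\alpha u\|_{\P_r}^2\le r^{-2\abs{\alpha}}\frac{\ell!}{(\ell-\abs{\alpha})!}\|u\|_{\P_r}^2$ for $u\in\P^*_\ell$, together with $\abs{\ahom_\alpha}\le C^{\abs{\alpha}}$.

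Running one top-down recursion for inhomogeneous $p$ is an improvement on the paper. The paper's final step, $\|q\|^2\le\sum_k\|q_k\|^2$ ``since the $p_k$ are orthogonal'', is unjustified, because the solutions for different $p_k$ share degrees. Geometric decay across degrees plus Young's inequality, as you do, handles this correctly.

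However, your recursion is mis-normalized. With $a_j=r^{-j}\|q_j\|_{\P_r}$ every power of $r$ cancels, since $a_j=\|q_j\|_{\P_1}$. What the estimates actually give is $a_{j+2}\le (j+1)^{-1/2}r^{-j}\|p_j\|_{\P_r}+\sum_{n\ge3}C^n\big(\tfrac{(j+n)!}{(j+1)!}\big)^{1/2}a_{j+n}$. Its coefficients are $r$-free and grow like $m^{(n-1)/2}$, so no geometric factor appears.

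The right quantity is $b_j:=r^{-2}\|q_j\|_{\P_r}$. For it, $b_{j+2}\le(j+1)^{-1/2}\|p_j\|_{\P_r}+\sum_{n\ge3}C^n(m+2)^{(n-1)/2}r^{2-n}b_{j+n}$, and $C^n(m+2)^{(n-1)/2}r^{2-n}\le(C'm/r)^{n-2}$ precisely because $n\ge3$. This also settles your hedge. The loss per derivative is $\sqrt m/r$ as you computed, but $\tilde{S}$ gains only $r^2/\sqrt{j+1}$. So a cubic term costs $m/r$, which is why the threshold is $r\ge Cm$.

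\emph{The change of variables.} Your conjugation does not remove the loss you identified. On $\P^*_\ell$ one only has $\lambda_{\min}^{\ell/2}\|u\|_{\P_r}\le\|Tu\|_{\P_r}\le\lambda_{\max}^{\ell/2}\|u\|_{\P_r}$. Your $\mc{R}$-estimate is stated in $\P_r$, so you must mix the two norms somewhere. Any bound proved in $\tilde{\P}_r$ therefore returns to $\P_r$ only with a factor of order $\kappa(\ahom)^{m/2}$. The paper's ``up to a change of coordinates'' has the same gap.

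What works is to use only a rotation, which is an isometry of $\P_r$. Take $\ahom=\mathrm{diag}(\lambda_i)$ and note that $\partial_i^*=r^{-2}x_i$ in $\P_r$. Then $\mc{A}_2\mc{A}_2^*\ge[\mc{A}_2,\mc{A}_2^*]=r^{-4}\sum_i\lambda_i^2(4x_i\partial_i+2)\ge r^{-4}\lambda_{\min}^2(4\ell+2d)$ on $\P^*_\ell$. Hence the minimal-norm right inverse $\mc{A}_2^*(\mc{A}_2\mc{A}_2^*)^{-1}$ satisfies \eref{invlapbound} up to a factor $\lambda_{\min}^{-1}$, with no degree-dependent loss. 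Your positivity argument supplies $\lambda_{\min}>0$.

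\emph{The pointwise bound.} It cannot follow by absorbing constants into $r\ge Cm$, because the $n=k$ term carries $r^0=1$. In fact that case is false as stated. If $p\in\P^*_m$, the top part of any solution solves $\mc{A}_2q_{m+2}=p$. So $\abs{\nabla^{m+2}q(0)}=\sqrt{(m+2)!}\,r^{-m-2}\|q_{m+2}\|_{\P_r}$ is at least its value for the minimal-norm solution.

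For $H\equiv0$ and $\a\equiv I$ one gets $\phi_{e_i}=v_i$ and $\mc{A}_2=-\Delta$. For harmonic $p$ the minimum is then $\big(\tfrac{(m+1)(m+2)}{2(d+2m)}\big)^{1/2}\abs{\nabla^mp(0)}$. For example, with $d=2$ and $p=x_1^3-3x_1x_2^2$ it equals $\tfrac{\sqrt5}{2}\abs{\nabla^3p(0)}$. The paper's proof breaks at the same place: its inequality $((k+2)!/(m+1)!)^{1/2}\le1$ fails for $k=m$.

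What your corrected recursion does give, for $r\ge Cm$, is
$\abs{\nabla^{k+2}q(0)}\le C\sqrt{k+2}\,\abs{\nabla^kp(0)}+\sum_{n=k+1}^m r^{n-k}\abs{\nabla^np(0)}$.
This is what is actually needed in \lref{hetpolysolveseq}.
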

\begin{proof}
Up to a change of coordinates, we may take $\ahom := I_d$, the $d$-dimensional identity matrix. Then, we see that $\mc{A} = \Delta + \sum_{\abs{\alpha}\geq 3} \ahom_\alpha \partial^\alpha =: \Delta + \mc{A}'.$ 

We begin by considering $p\in \P_m^*$. We then recursively construct the sequence of polynomials $\{q_k\}$ as follows, 
$$\begin{cases}
q_0 = Sp \\
q_k = -S\inp*{\sum_{i=3}^{k+2}\sum_{\abs{\alpha}=i} \ahom_\alpha \partial^\alpha q_{k+2-i}}\,.
\end{cases}$$
Following an inductive argument, we see that $q_k \in \P^*_{m+2-k}$. Therefore, we can see that $q_k = 0$ for $k\geq m+2$. Now, we claim that $q = \sum_{k=0}^{m+1} q_k$ satisfies $\mc{A} q = p$ with the desired bound. Using the definition of $q_k$, we can compute that
\begin{align*}
\Delta q = p - \sum_{k=1}^{m+1} \sum_{i=3}^{k+2}\sum_{\abs{\alpha}=i} \ahom_\alpha \partial^\alpha q_{k+2-i} 
= p -\sum_{j=0}^{m+1}\sum_{l=3}^{m+2 - j}\sum_{\abs{\alpha}=l} \ahom_\alpha \partial^\alpha q_{j} = p - \mc{A}'q\,.
\end{align*}

Now it only remains to show that the desired bounds hold.
Applying \eref{invlapbound}, we see that
$$\norm{q_0}_{\P_r} \leq \frac{r^2}{\sqrt{m+1}}\norm{p}_{\P_r}\,,$$
Then, proceeding via induction, supposing that for $j\leq k-1$
\begin{equation}
\label{e.inductpolybound}
\norm{q_j}_{\P_r} \leq   \frac{r^2}{\sqrt{m+1}}\inp*{\frac{Cm}{r}}^j \norm{p}_{\P_r} \,,
\end{equation}
we can compute, using \eref{invlapbound} and \eref{correctorbound}, that
\begin{align*}
\norm{q_k}_{\P_r} &\leq r^2(m-k+1)^{-\frac{1}{2}} \sum_{i=3}^{k+2} \big\| \!\sum_{\abs{\alpha}=i}\ahom_\alpha \partial^\alpha q_{k+2-i} \big\|_{\P_r}\\
&\leq \frac{r^2}{\sqrt{m+1}} \sum_{i=3}^{k+2}C^i\inp*{\frac{(m-k+i)!}{(m-k)!}}^\frac{1}{2} r^{-i}\norm{q_{k+2-i}}_{\P_r} \\
&\leq \frac{r^2}{\sqrt{m+1}}\norm{p}_{\P_r}\sum_{i=3}^{k+2}  C^i\inp*{\frac{(m-k+i)!}{(m+1)(m-k)!}}^\frac{1}{2} r^{2-i}\inp*{\frac{Cm}{r}}^{k+2-i}\\
&\leq \frac{r^2}{\sqrt{m+1}}\inp*{\frac{Cm}{r}}^k \norm{p}_{\P_r}\sum_{i=3}^{k+2} m^{\frac{i+3}{2} -i} \\
&\leq \frac{r^2}{\sqrt{m+1}}\inp*{\frac{Cm}{r}}^k \norm{p}_{\P_r}\,.
\end{align*}
So summing this over $k\in\{0, \ldots, m+1\}$, we see that, for $r\geq Cm$
\begin{align*}
\norm{q}_{\P_r} \leq \sum_{k=0}^{m+1}\norm{q_k}_{\P_r} \leq \frac{r^2}{\sqrt{m+1}}\norm{p}_{\P_r}\sum_{k=0}^{m+1}\inp*{\frac{Cm}{r}}^k 
\leq \frac{Cr^2}{\sqrt{m+1}}\norm{p}_{\P_r}.
\end{align*}
Furthermore, from \eref{inductpolybound}, we see that, for $k\in \{0, 1, \ldots, m\}$,
\begin{align*}
\abs{\nabla^{k+2} q(0)} &= \frac{((k+2)!)^\frac{1}{2}}{r^{k+2}} \norm{q_{m-k}}_{\P_r} \\
&\leq \frac{r^2}{\sqrt{m+1}} \frac{(Cm)^{m-k}((k+2)!)^\frac{1}{2}}{r^{m+2}} \norm{p}_{\P_r} \\
&= \frac{(Cm)^{m-k}((k+2)!)^\frac{1}{2}}{((m+1)!)^\frac{1}{2}} \abs{\nabla^m p(0)} \\
&\leq  (Cm)^{m-k} \abs{\nabla^m p(0)} \\
&\leq r^{m-k}\abs{\nabla^m p(0)}\,.
\end{align*}
Finally, to extend this estimate to $p\in\P_m$, we write 
$$p = \sum_{k=0}^m p_k\,,$$
where $p_k\in \P_k^*$. Then, using the result proved above, we can construct $q_k\in \P_{k+2}$ such that $\mc{A}q_k = p_k$ and 
$$\norm{q_k}_{\P_r} \leq \frac{Cr^2}{\sqrt{k+1}}\norm{p_k}_{\P_r}\,.$$
Therefore, letting $q = \sum_{k=0}^m q_k$, we see that, by linearity, $\mc{A}q = p$ and, since the $p_k$ are orthogonal with respect to the $\P_r$ inner product, 
\begin{align*}
\norm{q}^2_{\P_r} &\leq \sum_{k=0}^m \norm{q_k}^2_{\P_r} \\
&\leq \sum_{k=0}^m \frac{Cr^4}{k+1}\norm{p_k}^2_{\P_r} \\
&\leq Cr^4 \norm{p}^2_{\P_r}\,.
\end{align*}
Finally, by using the bound obtained in the homogeneous case, we see that, for $k\in\{0, \ldots, m\}$,
\begin{align*}
\abs{\nabla^{k+2} q(0)} &\leq \sum_{n=k}^m \abs{\nabla^{k+2} q_n(0)} \\
&\leq \sum_{n=k}^m r^{n-k}\abs{\nabla^n p(0)}\,,
\end{align*}
completing the proof.
\end{proof}

\subsection{Heterogeneous Polynomials}
We define the space of heterogenous polynomials $\A_m$ as follows 
$$\A_m = \insb*{\psi : \psi = \sum_{\abs{\alpha}\leq m} \phi_\alpha \partial^\alpha p \text{ for } p\in \P_m}\,.$$
Notice that for $\psi\in\A_m$, with $\psi = \sum_{\abs{\alpha}\leq m}\phi_\alpha \partial^\alpha q$, we can rewrite 
$$q(x) = \sum_{k=0}^m \sum_{\abs{\alpha} = k} \frac{1}{\alpha!} x^\alpha\partial^\alpha q(0)$$ 
and use the bounds given in \lref{correctorexist} to see that
\begin{equation}
\label{e.easyhetpolybound}
\norm{\psi}_{\uL^2(Q_r;L^2_\gamma)} \leq \sum_{k=0}^m \inp*{\frac{Cr}{k+1}}^k \abs{\nabla^k q(0)}\,,
\end{equation}
for $r\geq 1$. Using this bound, we now prove an existence result for members of $\A_m$ with respect to the equation.
\begin{lem}
\label{l.hetpolysolveseq}
For $m\in\N$ and $p\in\P_m$, there exists $\psi\in\A_{m+2}$ such that
$$
-\Delta_v \psi +v\cdot\nabla_v\psi -v\cdot\nabla_x\psi +\nabla H\cdot\nabla_v\psi = p  
$$
and, for $r\geq Cm^2$, 
$$\norm{\psi}_{\uL^2(Q_r; L^2_\gamma)} \leq \sum_{n=0}^m  \inp*{\frac{C r}{n+3}}^{n+2} \abs{\nabla^n p(0)} \,.$$
\end{lem}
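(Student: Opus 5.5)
Combine the two preceding results. By \lref{invertmacroop}, the given $p\in\P_m$ has a polynomial $q\in\P_{m+2}$ with $\mc{A}q = p$. Set
$$\psi := \sum_{\abs{\alpha}\leq m+2}\phi_\alpha \partial^\alpha q \in \A_{m+2}.$$
Applying \eref{correctoreval} with $k=m+2$ to $q$ gives
$$L\psi = \sum_{\abs{\alpha}\leq m+2}\ahom_\alpha\partial^\alpha q = \mc{A}q = p,$$
so $\psi$ solves the equation. (The statement writes $-\Delta_v\psi + v\cdot\nabla_v\psi$; I read this as the operator $L$ with the matrix $\a$, for which the correctors were built. If $\a=I_d$ is intended, the identity is the same.) Existence is therefore immediate, and the real work is the quantitative bound.

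For the bound, apply \eref{easyhetpolybound} to $\psi$ with $q$ in place of the generating polynomial:
$$\norm{\psi}_{\uL^2(Q_r;L^2_\gamma)} \leq \sum_{k=0}^{m+2}\inp*{\frac{Cr}{k+1}}^k\abs{\nabla^k q(0)}.$$
The terms with $k\geq 2$ are controlled by the derivative estimate of \lref{invertmacroop}: for $r\geq Cm$ and $j\in\{0,\dots,m\}$,
$$\abs{\nabla^{j+2}q(0)}\leq \sum_{n=j}^m r^{n-j}\abs{\nabla^n p(0)}.$$
The terms $k=0,1$ are harmless. The solution $q$ is only determined up to adding elements of $\ker\mc{A}$, which contains $\P_1$. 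So we may subtract the affine part of $q$, that is, assume $q(0)=0$ and $\nabla q(0)=0$, without changing $\mc{A}q$.

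Substituting and exchanging the order of summation gives
$$\norm{\psi}_{\uL^2(Q_r;L^2_\gamma)} \leq \sum_{n=0}^m \abs{\nabla^n p(0)}\sum_{j=0}^n \inp*{\frac{Cr}{j+3}}^{j+2} r^{n-j}.$$
The remaining step, which I expect to be the main obstacle, is to show the inner sum is at most $\inp*{\tfrac{C'r}{n+3}}^{n+2}$. This is a bookkeeping of factorial-type factors. The term $j=n$ is exactly the target. The ratio of the $j$-th term to the $n$-th is roughly
$$\frac{(n+3)^{n+2}}{C^{n-j}(j+3)^{j+2}} \lesssim \inp*{\frac{e\,(n+3)}{C}}^{n-j}\,(n+3)^{O(1)}.$$
This ratio is large when $n\sim m$, since then $(n+3)/C$ itself may be large. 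So the loose factor $r^{n-j}$ must be absorbed against the gain $(j+3)^{-(j+2)}$ versus $(n+3)^{-(n+2)}$.

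A crude bound loses a factor $m^{n-j}$. Compensating that loss needs the geometric series $\sum_j (Cm/r)^{n-j}$ to converge in a form that also beats the factorial ratio $\sim (n/j)^{j}\,n^{n-j}$. This is where the hypothesis $r\geq Cm^2$ (rather than $r\geq Cm$) enters: it makes the ratio of consecutive terms at most $Cm^2/r\leq \tfrac12$. After enlarging $C$, the series is then dominated by a constant times its $j=n$ term, which yields the claimed bound $\sum_{n=0}^m \inp*{\tfrac{Cr}{n+3}}^{n+2}\abs{\nabla^n p(0)}$.

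If the pointwise derivative bound from \lref{invertmacroop} proves too lossy, I would instead go back to the homogeneous-degree construction $q=\sum_k q_k$ in the proof of \lref{invertmacroop}. There I would track the bound \eref{inductpolybound} on each $\norm{q_k}_{\P_r}$ directly, converting the $\P_r$-norm to derivatives at $0$ via $\inprod{x^\alpha,x^\beta}_{\P_r}=\alpha!\,r^{2\abs{\alpha}}\1_{\alpha=\beta}$.
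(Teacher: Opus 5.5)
You follow the paper's route: the same $q$ from \lref{invertmacroop}, the same $\psi$ (correctly summed over $\abs{\alpha}\le m+2$), and the same use of \eref{correctoreval} and \eref{easyhetpolybound}. The last step, however, fails as you set it up. After you substitute $\abs{\nabla^{j+2}q(0)}\le\sum_{n=j}^m r^{n-j}\abs{\nabla^n p(0)}$, the $j$-th term of your inner sum is $C^{j+2}(j+3)^{-(j+2)}r^{n+2}$. The power of $r$ is the same for every $j$, so $r$ factors out and $r\ge Cm^2$ cannot affect the ratio of consecutive terms. Your claim that this hypothesis makes the ratio at most $Cm^2/r$ is therefore false for the sum you wrote. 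In fact the $j=0$ term alone, $C^2r^{n+2}/9$, exceeds $\inp*{C'r/(n+3)}^{n+2}$ once $n$ is large compared with $C'$, so no $m$-independent constant gives the inequality you need. The paper's final displayed inequality has the identical defect: it bounds $r^{n+2}\sum_{k\le n}C^k(k+3)^{-(k+2)}$ by $\inp*{Cr/(n+3)}^{n+2}$, and the hypothesis $r\ge Cm^2$ is never used there.

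The fix is your fallback, and it should be the main argument. The lossy step is at the end of the proof of \lref{invertmacroop}, where $(Cm)^{m-k}$ is replaced by $r^{m-k}$. Before that replacement, the proof gives $\abs{\nabla^{k+2}q(0)}\le (Cm)^{m-k}\abs{\nabla^m p(0)}$ for homogeneous $p$; this comes from \eref{inductpolybound}, and $r$ cancels when you convert $\norm{\cdot}_{\P_r}$ to derivatives at $0$. Summing over homogeneous parts then gives $\abs{\nabla^{j+2}q(0)}\le\sum_{n=j}^m (Cm)^{n-j}\abs{\nabla^n p(0)}$, with no $r$ at all. Now set $T_j:=\inp*{\frac{C_1 r}{j+3}}^{j+2}(C_2 m)^{n-j}$. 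Using $\inp*{\frac{n+3}{j+3}}^{j+2}\le e^{n-j}$ and $n+3\le 4m$ for $m\ge 1$, you get
\[
\frac{T_j}{T_n}=(n+3)^{n-j}\inp*{\frac{n+3}{j+3}}^{j+2}\inp*{\frac{C_2 m}{C_1 r}}^{n-j}\le\inp*{\frac{e(n+3)C_2 m}{C_1 r}}^{n-j}\le\inp*{\frac{C_3 m^2}{r}}^{n-j}.
\]
For $r\ge 2C_3m^2$ the inner sum is at most $2T_n\le\inp*{\frac{2C_1 r}{n+3}}^{n+2}$, which is the claimed bound. This is exactly where $m^2$ enters. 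One factor of $m$ comes from $(Cm)^{n-j}$. The other comes from comparing $(j+3)^{-(j+2)}$ with the much smaller $(n+3)^{-(n+2)}$ in the target.
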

\begin{proof}
This result is essentially a corollary of \lref{invertmacroop}. Applying the aforementioned lemma, there exists $q\in\P_{m+2}$ such that $\mc{A}q = p$. Then, setting 
$$\psi = \sum_{\abs{\alpha} \leq m} \phi_\alpha \partial^\alpha q\,, $$
and applying \eref{correctoreval} gives that 
$$
-\Delta_v \psi +v\cdot\nabla_v\psi -v\cdot\nabla_x\psi +\nabla H\cdot\nabla_v\psi = p\,.
$$
Finally, applying the bounds from \eref{easyhetpolybound} and \lref{invertmacroop}, we see that, for $r\geq Cm^2$
\begin{align*}
\norm{\psi}_{\uL^2(Q_r; L^2_\gamma)} &\leq  \sum_{k=2}^{m+2} \inp*{\frac{Cr}{k+1}}^k \abs{\nabla^k q(0)} \\
&\leq \sum_{k=0}^m \sum_{n=k}^m \frac{C^k r^{n+2}}{(k+3)^{k+2}} \abs{\nabla^n p(0)} \\
&\leq \sum_{n=0}^m r^{n+2}\sum_{k=0}^n \frac{C^k}{(k+3)^{k+2}} \abs{\nabla^n p(0)} \\
&\leq \sum_{n=0}^m  \inp*{\frac{C r}{n+3}}^{n+2} \abs{\nabla^n p(0)} \,,
\end{align*}
completing the proof
\end{proof}
Now, for the following sections, rather than simply having bounds on  $\psi\in \A_m$ in terms of the element of $\P_m$ defining it, it will also be useful to have bounds in terms of averages of the finite differences of $\psi$. To begin, we define the first order finite differences in $x$, $D^if $ by 
$$D^if(x,v)= f(x+e_i,v) - f(x,v)\,.$$
Likewise, for some $\alpha\in\Nd$, we define the higher order finite differences by
$$D^\alpha f(x,v) = (D^{\alpha_1}_1 D^{\al_2}_2 \ldots D^{\al_d}_d f)(x,v)\,.$$
In addition, we define the averages of $f$ over the lattice, $\hat{f}(x)$, as follows
$$\hat{f}(x):= \int_{(z+Q_1)\times\Rd} f(y,v)\,dyd\gamma(v)\,,$$
where $z\in\Zd$ is such that $x\in z+Q_1$.

The following lemma and proof follow \cite[Lemma 2.7]{AKS} very closely, but we reproduce the entire proof here for completeness.
\begin{lem}
\label{l.hetpolybound}
There exists $C(d, \Lambda) <\infty$ such that for $m\in\N$ and $\psi\in \A_m$ such that
$$\psi(x) = \sum_{\abs{\alpha} \leq m} \phi_\alpha \partial^\alpha q $$
for $q\in \P_m$, we have that, for $n\leq m$,
\begin{equation}
\label{e.polytermbound}
\inp*{\frac{m}{n+1}}^n \abs{\nabla^n q(0)} \leq \sum_{k=n}^m \inp*{\frac{Cm}{k+1}}^k \abs{D^k \hat{\psi}(0)}\,.
\end{equation}
In particular, for $r\geq m$, 
\begin{equation*}
\norm{\psi}_{\uL^2(Q_r; L^2_\gamma)} \leq \sum_{k=0}^m \inp*{\frac{Cr}{k+1}}^k \abs{D^k \hat{\psi}(0)}\,.
\end{equation*}
\end{lem}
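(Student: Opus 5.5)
The plan follows \cite[Lemma 2.7]{AKS}: compute $D^k\hat\psi$ explicitly in terms of $q$, obtain a triangular system, and invert it by downward induction on $n$ starting at $n=m$.

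\textbf{Step 1: lattice averages.} Since each $\phi_\alpha$ is $\Zd$-periodic, for $z\in\Zd$ we have
$$\hat\psi(z)=\sum_{\abs{\alpha}\le m}\int_{Q_1\times\Rd}\phi_\alpha(y,v)\,\partial^\alpha q(z+y)\,dy\,d\gamma(v).$$
Expanding $\partial^\alpha q(z+y)$ in Taylor series in $y$ writes this as $\tilde q(z)$, where
$$\tilde q=\sum_{\abs{\alpha}\le m}\sum_{\beta}\frac{c_{\alpha\beta}}{\beta!}\,\partial^{\alpha+\beta}q,\qquad c_{\alpha\beta}=\int_{Q_1\times\Rd}\phi_\alpha\,y^\beta\,dy\,d\gamma .$$
The bound \eref{correctorbound} gives $\abs{c_{\alpha\beta}}\le C^{\abs{\alpha}}2^{-\abs{\beta}}$. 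Thus $\tilde q=q+\mathcal{T}q$, where $\mathcal{T}$ is a constant-coefficient differential operator of order at least one with coefficients at order $j$ bounded by $C^j$ (the $\alpha=0$ term gives $q$ plus a $y^\beta$-average correction, which is also of order $\ge 1$).

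\textbf{Step 2: finite differences on lattice polynomials.} For a polynomial $\tilde q$ of degree $m$, $D^k\tilde q(0)=\nabla^k\tilde q(0)+\sum_{j>k}(\text{Stirling-type coefficients})\,\nabla^j\tilde q(0)$. This follows from $D_i=e^{\partial_i}-1=\partial_i+\partial_i^2/2+\cdots$. Combining with Step 1 gives a triangular relation
$$D^n\hat\psi(0)=\nabla^n q(0)+\sum_{j=n+1}^m E_{nj}\,\nabla^j q(0),\qquad \abs{E_{nj}}\le C^{j-n}.$$
Here one must check that the combinatorial factors contributed by the $\alpha$- and $\beta$-sums grow at most geometrically in $j-n$, which holds because the coefficients $1/\beta!$ and the Stirling numbers are controlled.

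\textbf{Step 3: invert the triangular system.} Solve $\nabla^n q(0)=D^n\hat\psi(0)-\sum_{j>n}E_{nj}\nabla^jq(0)$ by downward induction from $n=m$. Multiply by the weight $w_n=(m/(n+1))^n$. The induction hypothesis for $j>n$ yields terms of size $w_n C^{j-n}w_j^{-1}\sum_{k\ge j}(Cm/(k+1))^k\abs{D^k\hat\psi(0)}$.

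The main obstacle is the ratio $w_n/w_j$. Roughly, $w_n/w_j\approx m^{n-j}\,(j+1)^j/(n+1)^n\lesssim (e(j+1)/m)^{j-n}$, which is at most $e^{j-n}$ since $j\le m$. So each step costs at most a factor $(Ce)^{j-n}$, and this geometric growth is absorbed by enlarging the constant in $(Cm/(k+1))^k$, taking $C$ there much larger than the $C$ in $E_{nj}$. The sums over $j$ between $n$ and $k$ must also be absorbed. I would handle this by proving the stronger inductive statement with constant $C_1$ and checking that $\sum_j (C_0e/C_1)^{j-n}\le 1$ for $C_1\ge 2eC_0$.

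\textbf{Step 4: the norm bound.} Insert \eref{polytermbound} into \eref{easyhetpolybound}:
$$\norm{\psi}\le\sum_n (Cr/(n+1))^n\abs{\nabla^nq(0)}\le\sum_n (Cr/m)^n\sum_{k\ge n}(Cm/(k+1))^k\abs{D^k\hat\psi(0)}.$$
Swap the sums. For $r\ge m$, $(Cr/m)^n\le (Cr/m)^k$, and $\sum_{n\le k}$ contributes at most a factor $k+1\le 2^k$. This gives $\sum_k(C'r/(k+1))^k\abs{D^k\hat\psi(0)}$.
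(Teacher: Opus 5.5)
Your Steps 1 and 4 are sound; Step 4 is the paper's own final computation. The trouble is that the bound $\abs{E_{nj}}\le C^{j-n}$ claimed in Step 2 is false, and Step 3 depends on it.

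\textbf{Why Step 2 fails.} Converting derivatives into finite differences is not geometric in $j-n$. Along a coordinate direction,
$D_1^{n}=(e^{\partial_1}-1)^n=\partial_1^n+\tfrac n2\,\partial_1^{n+1}+\cdots$,
and the coefficient of $\partial_1^{j}$ is $n!\,S(j,n)/j!$, where $S$ denotes the Stirling numbers of the second kind. Composing with $I+\mathcal T$ only adds the $n$-independent first-order coefficient $t$ of $\mathcal T$. So the entry of $E_{n,n+1}$ coupling $D_1^n$ to $\partial_1^{n+1}$ equals $\tfrac n2+t$, which is unbounded in $n$.

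\textbf{Why adjusting constants in Step 3 does not help.} A downward induction that uses only $\abs{E_{nj}}$ can give at best $\abs{x}\le(I-\abs{E'})^{-1}\abs{b}$, with $E'=E-I$. Already the chain of unit steps gives $((I-\abs{E'})^{-1})_{n,n+\ell}\ge\prod_{s<\ell}(\tfrac{n+s}{2}-\abs{t})$, whereas the true inverse has only exponentially large entries. Take $n=\ell=m/2$, where $w_n/w_m\approx 2^{m/2}$. You would then need $(cm)^{m/2}\lesssim C^m$, which fails for large $m$ whatever $C$ is. The cancellations in the exact inverse are essential.

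\textbf{What works: invert exactly.}
\begin{enumerate}
\item Keep Step 1, so that $D^k\hat\psi(0)=D^k\tilde q(0)$ with $\tilde q=(I+\mathcal T)q$.
\item Newton interpolation gives $\tilde q=\sum_{\abs{\alpha}\le m}D^\alpha\tilde q(0)N_\alpha$ with $\abs{\partial^\beta N_\alpha(0)}\le\binom{\alpha}{\beta}$; this is the content of \eref{derivfdbound} in Lemma~\ref{l.polyest}. Hence $\abs{\nabla^j\tilde q(0)}\le\sum_{k\ge j}C^k\abs{D^k\hat\psi(0)}$. Note the factor is $C^k$, not $C^{k-j}$.
\item Invert $I+\mathcal T$ on $\P_m$ by the terminating Neumann series. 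Here the triangle inequality costs only a geometric factor, because $\mathcal T$ is a single fixed constant-coefficient operator with order-$\ell$ coefficients at most $C^\ell$. Also, a multi-index $\gamma$ can be written as an ordered sum of nonzero multi-indices in at most $C^{\abs{\gamma}}$ ways. So $(I+\mathcal T)^{-1}$ has order-$\ell$ coefficients at most $C^\ell$.
\item Combining, $\abs{\nabla^n q(0)}\le\sum_{k\ge n}C^k\abs{D^k\hat\psi(0)}$. Multiply by $w_n$ and use your estimate $w_n/w_k\le(2e)^{k-n}$. This gives \eref{polytermbound}, with all growth absorbed into the full factor $C^k$ in $(Cm/(k+1))^k$.
\end{enumerate}

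\textbf{On the paper's route.} The paper inducts on $m$, splitting off the top homogeneous part $q_m$ and applying the hypothesis to $\psi-\chi\in\A_{m-1}$. That argument relies on the analogous claim $\abs{D^n\hat\chi(0)}\le C^{m-n}\abs{D^m\hat\psi(0)}$. This fails for the same reason: for example $D_1^{m-1}(x_1^m/m!)(0)=\tfrac{m-1}{2}$. So following the paper's route would not sidestep the issue; the exact Newton inversion above does.
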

\begin{proof}
We begin by proving \eref{polytermbound} through induction. An easy computation shows that \eref{polytermbound} holds in the case that $m=0$ or $m=1$. Now, suppose $m\geq 2$ and $q\in\P_{m-1}$ we define $\psi\in\A_{m-1}$ by 
$$\psi(x) = \sum_{\abs{\alpha} \leq m-1} \phi_\alpha \partial^\alpha q$$
and suppose the following inequality holds
\begin{equation*}
\inp*{\frac{C(m-1)}{n+1}}^n \abs{\nabla^n q(0)} \leq \sum_{k=n}^{m-1} \inp*{\frac{C(m-1)}{k+1}}^k \abs{D^k \hat{\psi}(0)}\,,
\end{equation*}
for $n\leq m-1$.

Taking $q\in\P_m$ and defining $\psi$ as before, we define $q_m\in \P_m^*$ to be the homogeneous polynomial consisting of the terms of $q$ of order $m$. That is to say
$$D^m q_m = D^m q, \quad D^kq_m(0) = 0 \text{ for }k\in\{0, 1, \ldots, m-1\}\,.$$
As each $\phi_\alpha$ is $\Zd$-periodic and $q\in\P_m$, we see that, for $x\in Q_1(z)$,
$$D^m \hat{\psi}(x) = \dashint_{Q_1(z)} \sum_{\abs{\alpha} \leq m} \phi_\alpha(x) D^m \partial^\alpha q(x) = D^m q(x)\,.$$
Furthermore, as $q_m\in \P^*_m$, we can compute that $D^m q_m(0) = \nabla^m q_m(0)$, which, in particular, implies that 
$$q_m(x) = \sum_{\abs{\alpha} = m} \tfrac{1}{\alpha!}D^\alpha q_m(0) x^\alpha\,.$$ 
Using this representation, we can compute that for $n,k\in\N$ such that $n+k\leq m$ and $x\in\Rd$, 
\begin{align}
\label{e.derivdiffbound}
\abs{\nabla^k D^n q_m(x)} &\leq \frac{C^{m-n-k}}{(m-n-k)!} \abs{D^m q_m(0)} (k+\abs{x})^{m-n-k} \\
&= \frac{C^{m-n-k}}{(m-n-k)!} \abs{D^m \hat{\psi}(0)} (k+\abs{x})^{m-n-k}\,.\nonumber
\end{align}
So, defining 
$$\chi = \sum_{\abs{\alpha}\leq m} \phi_\alpha \partial^\alpha q_m\,,$$
we can use Jensen and the bound above to compute that, for $n\leq m$
\begin{align*}
\abs{D^n \hat{\chi}\;(0)} &\leq \norm{D^n \chi}_{\uL^2(Q_1; L^2_\gamma)} \\
&\leq \sum_{\abs{\alpha} \leq m-n} \norm{\phi_\alpha}_{\uL^2(Q_1; L^2_\gamma)} \norm{\partial^\alpha D^n q_m}_{\uL^2(Q_1)}  \\
&\leq \abs{D^m\hat{\psi}(0)} \sum_{k=0}^{m-n} C^k \inp*{\frac{C(k+1)}{(m-n-k)!}}^{m-n-k} \\
&\leq \abs{D^m\hat{\psi}(0)} C^{m-n}\,.
\end{align*}
Then, since $\psi - \chi \in \A_{m-1}$, we can apply the inductive hypothesis, as well as the previous two bounds to see that, for $n\leq m-1$,
\begin{align*}
\inp*{\frac{m}{n+1}}^n \abs{\nabla^n q(0)} &\leq \inp*{\frac{m}{n+1}}^n \abs{\nabla^n (q-q_m)(0)} + \inp*{\frac{m}{n+1}}^n \abs{\nabla^n q_m(0)} \\
&\leq \inp*{\frac{m}{m-1}}^n \sum_{k=n}^{m-1} \inp*{\frac{C(m-1)}{k+1}}^k (\abs{D^k \hat{\psi}(0)}+ \abs{D^k\hat{\chi}(0)}) \\
&\quad+ \inp*{\frac{m}{n+1}}^n\inp*{\frac{Cn}{m-n}}^{m-n} \abs{D^m\hat{\psi}(0)} \\
&\leq \sum_{k=n}^{m-1} \inp*{\frac{Cm}{k+1}}^k \abs{D^k \hat{\psi}(0)} + \abs{D^m \hat{\psi}(0)} \sum_{k=n}^{m-1} C^{m-k}\inp*{\frac{Cm}{k+1}}^k \\
&\quad+ \frac{1}{2}\inp*{\frac{Cm}{m+1}}^m\abs{D^m\hat{\psi}(0)} \\
&\leq \sum_{k=n}^{m} \inp*{\frac{Cm}{k+1}}^k \abs{D^k \hat{\psi}(0)} \,,
\end{align*}
where, to conclude, we note that, provided $C$ is sufficiently large,
$$\sum_{k=n}^{m-1} C^{m-k}\inp*{\frac{Cm}{k+1}}^k  \leq \frac{1}{2}\inp*{\frac{Cm}{m+1}}^m\,.$$
Finally, applying this bound to \eref{easyhetpolybound}, we see that for $r\geq m$, 
\begin{align*}
\norm{\psi}_{\uL^2(U; L^2_\gamma)} &\leq \sum_{k=0}^m \inp*{\frac{Cr}{k+1}}^k \abs{\nabla^k q(0)} \\
&\leq \sum_{k=0}^m \sum_{n=k}^m \inp*{\frac{Cr}{m}}^k\inp*{\frac{Cm}{n+1}}^n \abs{D^n \hat{\psi}(0)} \\
&= \sum_{n=0}^m \sum_{k=0}^n \inp*{\frac{Cr}{m}}^k\inp*{\frac{Cm}{n+1}}^n \abs{D^n \hat{\psi}(0)} \\
&\leq \sum_{n=0}^m\inp*{\frac{Cr}{m}}^n\inp*{\frac{Cm}{n+1}}^n \abs{D^n \hat{\psi}(0)} \\
&= \sum_{n=0}^m \inp*{\frac{Cr}{n+1}}^n \abs{D^n \hat{\psi}(0)}\,,
\end{align*}
where the second to last inequality holds provided $C$ is sufficiently large.
\end{proof}

Using this bound and inducting on $m$, we can see that there exists a unique $\psi\in\A_m$ with prescribed values of $D^k\hat\psi(0)$ for $k\leq m$.

\section{Homogenization of the Cauchy Problem}
In this section, we give the proof of \tref{perhypohom}. To simplify notation, we let $y,w=0$. Additionally, we fix $t$ sufficiently large corresponding to the time at which we want to estimate $P-\Phom$. In order to estimate this difference, we will pick some $t_0\in[1,\frac{1}{2}t]$ to define an mesoscopic layer from which we will begin building our approximate solutions. However, $P$ and $\Phom$ will almost certainly not be equal at time $t_0$, which can make some of our later computations more complicated. To help fix this issue, we introduce a new reference solution centered at $t_0$ and compare $P$ and $\Phom$ to this new function. We define $\Qhom$ as the solution to the following initial value problem

\begin{equation}
\label{e.homogenouseq}
\begin{cases}
\partial_t \Qhom - \nabla\cdot\ahom\nabla \Qhom = 0 & \text{ in } (t_0,\infty)\times \Rd \\
\Qhom(t_0, \cdot) = \inprod{P(t_0, \cdot, 0)}_\gamma & \text{ on } \Rd \, .
\end{cases}
\end{equation}

We first note that $\Qhom$ and $\Phom$ are close. In particular, up to rescaling, $\Qhom(t,x)- \Phom(t,x)$ is a solution to the heat equation with an initial condition that has zero mean. This implies that this difference decays to zero faster than a typical solution to the heat equation. In particular, by writing this difference as a convolution and Taylor expanding heat kernel, we see that
$$\abs{\Qhom(t,x) - \Phom(t,x)} \leq C \inp*{\frac{t_0}{t}}^{\frac{1}{2}} \Gamma_c(t,x)\,.$$

Now it only remains to estimate $P- \Qhom$. To do this, we proceed via a two-scale expansion argument and then apply Duhamel's principle to estimate this difference. To start, we let $\zeta(t)$ be a smooth cutoff function such that
$$\1_{(3t_0, t-2t_0)} \leq \zeta \leq \1_{(2t_0, t-t_0)} , \qquad \norm{\zeta'}_{L^\infty} \leq (2t_0)^{-1}\,.$$
Now, we define our two scale expansion as follows:
$$W(t,x,v) = \Qhom +  \zeta(t)\sum_{i=1}^d \partial_{x_i}\Qhom(t,x) \phi_{i}(x,v) + \zeta(t)\sum_{i,j=1}^d\partial_{x_i}\partial_{x_j}\Qhom(t,x) \psi_{ij}(x,v)  \,.$$
Here, $\phi_i$ solves the equation 
\begin{equation}
\label{e.correctoreq}
\begin{cases}
-\nabla_v\cdot\a\nabla_v \phi_i+ v\cdot\a\nabla_v \phi_i
- v\cdot\nabla_x \phi_i + \nabla H(x)\cdot \nabla_v \phi_i
= v_i& \text{ in } \T^d\times \R^d \\
(\phi_i)_{\T^d\times \Rd_\gamma} = 0\,,
\end{cases}
\end{equation}
and $\psi_{ij}$ solves
\begin{equation}
\label{e.secondcorrectoreq}
\begin{cases}
-\nabla_v\cdot\a\nabla_v \psi_{ij}+ v\cdot\a\nabla_v \psi_{ij}
- v\cdot\nabla_x \psi_{ij} + \nabla H(x)\cdot \nabla_v \psi_{ij}
= \ahom_{ij}(x,v)- \ahom_{ij} \\
(\psi_{ij})_{\T^d\times \Rd_\gamma} = 0\,,
\end{cases}
\end{equation}
where 
\begin{align*}
\ahom_{ij}(x,v) &= v_i\phi_j(x,v)\,,\\
\ahom_{ij}(x) &= \inprod{\ahom_{ij}(x,\cdot)}_\gamma\,, \\
\ahom_{ij} &= \inprod{\ahom_{ij}(\cdot)}_{\sigma}\,.
\end{align*}
The existence of $\phi_i$ and $\psi_{ij}$ as well as the derivation of their equations is presented in \lref{correctorexist} and the preceding discussion. Note that, using the equation $\psi_{ij}$ satisfies, it follows that
\begin{equation}
\label{e.avgpsieq}
\inprod{v\cdot\nabla_x \psi_{ij} + \nabla H \cdot \nabla_v \psi_{ij}}_\gamma = \ahom_{ij}(x) - \ahom_{ij}\,.
\end{equation}
and 
$$\norm{\psi_{ij}}_{L^2(\Td; H^1_\gamma)} \leq C \norm{\ahom(x)-\ahom}_{L^2(\Td)}\,.$$

Continuing, we now compute the equation that the two scale expansion, $W$, satisfies. In particular, we want to show that
\begin{align}
\label{e.correctorerror}
(\partial_t + L) W &=  - (\ahom(x) -\ahom):\nabla^2 \Qhom  - (1-\zeta) (v\cdot\nabla_x\Qhom - \partial_t\Qhom)\\
&\quad+\sum_{i=1}^d\phi_i \partial_t(\zeta\partial_{x_i}\Qhom) + \sum_{i,j=1}^d \psi_{ij}\partial_t(\zeta\partial_{x_i}\partial_{x_j}\Qhom) - \sum_{i,j,k=1}^d v_k\psi_{ij} \zeta\partial_{x_i}\partial_{x_j}\partial_{x_k} \Qhom\,.\nonumber
\end{align}
Using \eref{homogenouseq}, \eref{correctoreq}, and \eref{secondcorrectoreq}, we can see that
\begin{align*}
(\partial_t + L) W(t,x,v) 
&= \partial_t\Qhom -  v\cdot\nabla_x \Qhom 
+\sum_{i=1}^d L\phi_i\zeta\partial_{x_i}\Qhom 
- \phi_i \zeta(v\cdot \nabla_x\partial_{x_i} \Qhom) 
+  \phi_i \partial_t(\zeta\partial_{x_i} \Qhom)\\
&\quad+  \sum_{i,j=1}^d   L \psi_{ij} \zeta\partial_{x_i}\partial_{x_j} \Qhom 
- \zeta\psi_{ij}(v\cdot\nabla_x\partial_{x_i}\partial_{x_j} \Qhom) + \psi_{ij}\partial_t(\zeta \partial_{x_i}\partial_{x_j} \Qhom)   \\
&=-\sum_{i,j=1}^d\ahom_{ij}(x) \partial_{x_i}\partial_{x_j} \Qhom  - (1-\zeta)(v\cdot\nabla_x\Qhom - \ahom:\nabla^2\Qhom)\\
&\quad+\sum_{i=1}^d\phi_i \partial_t(\zeta\partial_{x_i}\Qhom) + \sum_{i,j=1}^d \psi_{ij}\partial_t(\zeta\partial_{x_i}\partial_{x_j}\Qhom) - \sum_{i,j,k=1}^d v_k\psi_{ij} \zeta\partial_{x_i}\partial_{x_j}\partial_{x_k} \Qhom \,,
\end{align*}
which yields the desired form given in \eref{correctorerror}. 

As an aside, it may seem a little strange that the first term in our two-scale expansion computed in the proof of \tref{perhypohom} is in non-divergence when typically, we want this term to more closely mirror the form of the homogenized equation. In fact, this term can be written in divergence form, which we can compute as follows:
\begin{align*}
\sum_{i,j = 1}^d \ahom_{ij}(x) \partial_{x_i}\partial_{x_j} \Qhom = \nabla\cdot\ahom(x)\nabla \Qhom + \sum_{i,j=1}^d \partial_{x_i}\ahom_{ij}(x) \partial_{x_j} \Qhom 
&= \nabla\cdot\ahom(x)\nabla \Qhom + \sum_{j=1}^d \partial_{x_j} \Qhom \inprod{v\cdot \nabla_x \phi_j}_\gamma \\
&= \nabla\cdot\ahom(x)\nabla \Qhom- \nabla H \cdot\ahom(x)\nabla \Qhom \, \\
&= e^{H(x)}\nabla\cdot(e^{-H(x)} \ahom(x)\nabla \Qhom) \\
&= e^{H(x)}\nabla\cdot((e^{-H(x)} \ahom(x) -\ahom)\nabla \Qhom). \\
\end{align*}
However, due to difficulties obtaining good estimates on the regularity in $x$ of solutions to \eref{hypoeq}, this error term is much more difficult to bound. Therefore, we use the error term in non-divergence form to prove convergence. 

So, using Duhamel's Principle, we see that
\begin{align}
\label{e.duhamel}
&(P-W)(t,x,v) \\
&\quad= \sum_{i,j=1}^d\int_{t_0}^t \zeta(s)\int_{\Rd} \inprod{P(t-s, x,v,z,\cdot)}_\gamma  (\ahom_{ij}(z) -\ahom_{ij})\partial_{x_i}\partial_{x_j} \Qhom \,dzds \nonumber\\ 
&\qquad + \int_{t_0}^t (1-\zeta(s))\int_{\Rd} \inprod{\nabla_v P(t-s, x,v,z,\cdot)}_\gamma\cdot\nabla_x \Qhom\,dzds\nonumber \\
&\qquad + \int_{t_0}^t (1-\zeta(s))\int_{\Rd} \inprod{P(t-s, x,v,z,\cdot)}_\gamma\ahom:\nabla^2_x \Qhom\,dzds\nonumber \\
&\qquad + \int_{\Rd\times\Rd} P(t-t_0, x,v,z,w)(P(t_0, z,w, 0, 0) - \inprod{P(t_0, z,w,0,0)}_\gamma)\,dm(z,w) \nonumber\\
&\qquad+ \sum_{i=1}^d\int_{t_0}^t\int_{\Rd\times\Rd} P(t-s, x,v,z,w) \phi_i (\partial_t(\zeta(s)\partial_{x_i}\Qhom))\,dm(z,w)ds \nonumber\\
&\qquad+\sum_{i,j=1}^d\int_{t_0}^t \zeta(s)\int_{\Rd\times\Rd} P(t-s, x,v,z,w) (\psi_{ij}\partial_t(\zeta(s)\partial_{x_i}\partial_{x_j}\Qhom))\,dm(z,w)ds \nonumber\\
&\qquad+\sum_{i,j,k=1}^d\int_{t_0}^t \zeta(s)\int_{\Rd\times\Rd} P(t-s, x,v,z,w) (v_k\psi_{ij} \partial_{x_i}\partial_{x_j}\partial_{x_k} \Qhom)\,dm(z,w)ds \,.
\nonumber 
\end{align}
Now, to conclude, it only remains to estimate 
$$\norm{P-W}_{\uL^2(\mc{Q}_T; L^2_\gamma)}\,.$$ 

To do this, we will bound each of these terms on the right hand side individually. Before doing so, we first prove two estimates that will be critical for these later bounds. The first of these gives us bounds on the time decay of $\Qhom$ and its derivatives relative to the heat kernel: for $s\geq 2t_0$
\begin{equation}
\label{e.Qhombound}
s^{\frac{3}{2}}\abs{\partial_t \nabla \Qhom(s,z)} + s^2\abs{\partial_t \nabla^2 \Qhom(s,z)} + \sum_{k=0}^3s^{\frac{k}{2}}\abs{\nabla^k \Qhom(s,z)} \leq C \Gamma_c(s,z) \,.
\end{equation}
To simplify the notation, we prove that 
$$s^{\frac{1}{2}}\abs{\nabla \Qhom(s,z)} \leq C \Gamma_c(s,z)\,.$$
The remaining bounds will follow from a very similar computation. Using the definition of $\Qhom$ and \pref{nasharonson}, we can compute that, 
\begin{align*}
s^{\frac{1}{2}}\abs{ \nabla &\Qhom(s,z)} \\
&\leq s^\frac{1}{2}  \int_{\Rd} \abs{\inprod{P(t_0, y, \cdot, 0)}_\gamma} \abs{\nabla \Phom(s-t_0, z-y)}\,dy \\
&\leq Cs^{\frac{1}{2}}\int_{\Rd} \Gamma_c(t_0, y) \abs{\nabla \Gamma_c(s-t_0, z-y)}\,dy \\
&= Cs^{\frac{1}{2}}(s-t_0)^{-1}\int_{\Rd} \abs{z-y}\Gamma_c(t_0, y) \Gamma_c(s-t_0, z-y)\,dy \\
&\leq Cs^{\frac{1}{2}}(s-t_0)^{-1} \inp*{\int_{\Rd} \abs{z-y}^2\Gamma_c(s-t_0, z-y)\,dy}^\frac{1}{2}\inp*{\int_{\Rd}\Gamma_c(t_0, y)^2 \Gamma_c(s-t_0, z-y)\,dy}^\frac{1}{2} \\
&\leq Cs^{\frac{1}{2}}(s-t_0)^{-\frac{1}{2}}\Gamma_c(s,z) \\
&\leq C\Gamma_c(s,z)\,.
\end{align*}
In the final line, we use the bound on $s$ to reduce all of the prefactors to a constant, completing the proof.

Additionally, we will bound the correctors in terms of the error stated in \tref{perhypohom}. Particularly, we will show that
\begin{equation}
\label{e.correctorerrorbound}
\int_{\Rd} (\norm{\phi_i}^2_{H^1_\gamma} + \norm{\psi_{ij}}^2_{H^1_\gamma}) \Gamma_c(s,z)\,dz \leq C \,.
\end{equation}
To prove this, we show that
$$\int_{\Rd} \norm{\phi_{i}}^2_{H^1_\gamma}\Gamma_c(s,z)\,dz \leq C\,.$$
The bound on the other term follows from the same argument, so we omit it here.
We first note that, defining $Q_r = [-\frac{1}{2}r, \frac{1}{2}r]$ as in section 1 and using that $B_{\frac{r}{2}}\subset Q_r \subset B_r$, it suffices to show that
$$
\int_{\Rd} \norm{\phi_{i}}^2_{H^1_\gamma}\Gamma_c(s,z)\,dz \leq C\dashint_{Q_{\sqrt{s}}\times \Rd} \phi_{i}^2\,dxd\gamma \,.
$$
Now, to prove the bound, we partition $\Rd$ into squares of unit length and use the periodicity of $\phi_{e_k}$ to see that
\begin{align*}
\int_{\Rd} \norm{\phi_{i}}^2_{H^1_\gamma}\Gamma_c(s,z)\,dz &= \sum_{z\in\Zd} \int_{Q_1(z)}\norm{\phi_{i}}^2_{H^1_\gamma}\Gamma_c(s,z)\,dz\\
&\leq \norm{\phi_{i}}^2_{L^2(Q_1; H^1_\gamma)} \sum_{z\in\Zd} \sup_{Q_1(z)}\Gamma_c(s,\cdot) \\
&\leq C\norm{\phi_{i}}^2_{L^2(Q_1; H^1_\gamma)}\\
&\leq C\,,
\end{align*}
as required.

Now, we'll bound each of these terms on the right hand side of \eref{duhamel} in order, starting with the first.

\subsubsection*{Term 1 bound}
We begin by taking the first term in \eref{duhamel}:
\begin{align*}
\int_{t_0}^t \zeta(s)\int_{\Rd} \inprod{P(t-s, x,v,z,\cdot)}_\gamma  (\ahom(z)-\ahom) :\nabla^2\Qhom \,dzds\,.
\end{align*}
We'll begin by bounding this convolution in $L^\infty$. Using \pref{nasharonson}, \eref{Qhombound}, and \eref{correctorerrorbound}, as well as H\"older's inequality, we see that
\begin{align*}
&\int_{t_0}^t\zeta(s) \int_{\Rd} \inprod{P(t-s, x,v,z,\cdot)}_\gamma (\ahom(z)-\ahom) :\nabla^2 \Qhom\,dzds\\
&\quad\leq \int_{2t_0}^{t-t_0} s^{-1}\inp*{\int_{\Rd} \frac{\inprod{P(t-s, x,v,z,w)}_\gamma^2}{\Gamma_c(t-s, x-z)}}^\frac{1}{2}\inp*{\int_{\Rd}\abs{\inprod{\nabla_v \phi_{e}}_\gamma}^2\Gamma_c(t-s, x-z) \Gamma_c(s,z)^2}^\frac{1}{2} \\
&\quad\leq \int_{2t_0}^{t-t_0}  s^{-1}\inp*{\int_{\Rd}\abs{\inprod{\nabla_v \phi_{e}}_\gamma}^2 \Gamma_c(t-s, x-z) \Gamma_c(s,z)^2} \\
&\quad\leq Ct^{\frac{d}{2}} \Gamma_c(t,x)\int_{2t_0}^{t-t_0} (t-s)^{-\frac{d}{4}} s^{-\frac{d}{4}-1}\inp*{\int_{\Rd}\abs{\inprod{\nabla_v \phi_{e}}_\gamma}^2 \Gamma_c(s,z)}^\frac{1}{2}\,ds \\
&\quad\leq C  t^{\frac{d}{2}} \Gamma_c(t,x) \int_{t_0}^{t-t_0} (t-s)^{-\frac{d}{4}}s^{-\frac{d}{4} -1}\,ds \\
&\quad\leq C\inp*{\frac{t}{t_0}}^{\frac{d}{4}}t_0^{-\frac{1}{2}}\Gamma_c(t,x) \,.
\end{align*}
Here, moving from the first to second line, we note that, using integration by parts, $\ahom(z) - \ahom = \inprod{\nabla_v \phi_e}_\gamma$. Additionally, moving from line 3 to 4, we again use the following bound on the heat kernel:
$$(t-s)^{\frac{d}{2}}\Gamma_c(t-s, x-z) s^{\frac{d}{2}}\Gamma_c(s,z) \leq Ct^d \Gamma_c(t,x)^2\,,$$
or, equivalently, that
$$\exp\inp*{-\frac{\abs{x-z}^2}{t-s} - \frac{\abs{z}^2}{s} + \frac{\abs{x}^2}{t}} \leq C\,.$$
This is a consequence of the triangle inequality as well as the monotonicity of the heat kernel. To conclude, we note that 
$$\dashint_{\mc{Q}_T} \inp*{\frac{t}{t_0}}^{\frac{d}{2}}t_0^{-1}\Gamma_c(t,x)^2 \leq Ct_0^{-\frac{d}{2}-1}\,.$$

\subsubsection*{Term 2 bounds}
Here, we begin by looking at the term
$$\dashint_{\mc{Q}_T}\inp*{\int_{t_0}^t (1-\zeta(s))\int_{\Rd} \inprod{\nabla_v P(t-s, x,v,z,w)}_\gamma\cdot \nabla\Qhom(s,z)\,dzds}^2\,.$$
To bound this term, we split the inner time integral into two intervals, $[t_0, 4t_0]$ and $[t-2t_0, t]$. For the first interval, we'll proceed similarly to the previous term, using \eref{gradnasharonson} as well. Using this bound and H\"older's inequality, we see that
\begin{align*}
&\dashint_{\mc{Q}_T}\inp*{\int_{t_0}^t (1-\zeta(s))\int_{\Rd} \inprod{\nabla_v P(t-s, x,v,z,w)}_\gamma\cdot \nabla\Qhom(s,z)\,dzds}^2\,dt\\
&\quad\leq \dashint_{\mc{Q}_T} \inp*{\int_{t_0}^{4t_0}\int_{\Rd} \norm{\nabla_v P(t-s, x,v,z,w)}_{L^2_\gamma}^2}\inp*{\int_{t_0}^{4t_0}\int_{\Rd}s^{-1}\Gamma_c(s,z)^2} \\
&\quad\leq \dashint_{\mc{Q}_T} \inp*{\sum_{z\in (\sqrt{t_0})\Zd}\int_{t_0}^{4t_0}\int_{B_{\sqrt{t_0}(z)}} \norm{\nabla_v P(t-s, x,v,z,w)}_{L^2_\gamma}^2}\inp*{\int_{t_0}^{4t_0}s^{-\frac{d}{2}-1}} \\
&\quad\leq C\dashint_{\mc{Q}_T} \inp*{\sum_{z\in (\sqrt{t_0})\Zd} \Gamma_c(t-t_0,x-z)^2} \\
&\quad\leq CT^{-\frac{d}{2}} \,.
\end{align*}
Next, to bound this integral on $[t-2t_0, t]$, we perform a similar computation to the bound of the first interval. Using \eref{gradnasharonson} and \eref{Qhombound}, we see that
\begin{align*}
&\dashint_{\mc{Q}_T}\inp*{\int_{t-2t_0}^{t} (1-\zeta(s))\int_{\Rd} \inprod{\nabla_v P(t-s, x,v,z,w)}_\gamma\cdot \nabla\Qhom(s,z)\,dzds}^2\,dt \\
&\quad\leq C\dashint_{\mc{Q}_T}\inp*{\sum_{z\in (\sqrt{t_0})\Zd}\int_{t_0}^{2t_0}\int_{B_{\sqrt{t_0}(z)}} \norm{\nabla_v P(s, x,v,z,w)}_{L^2_\gamma}^2}\inp*{\int_{t-2t_0}^{t} s^{-\frac{d}{2}-1}ds}\,dt \\
&\quad\leq C\dashint_{\mc{Q}_T} t^{\frac{d}{2}+1}_0 t^{-\frac{d}{2}-1}\inp*{\sum_{z\in (\sqrt{t_0})\Zd} \Gamma_c(t_0,x-z)^2} \\
&\quad\leq C T^{-1} t_0^{-\frac{d}{2}+1} \,.
\end{align*}

\subsubsection*{Term 3 bound}
Now, we will bound the term 
$$\dashint_{\mc{Q}_T}\inp*{\int_{t_0}^t (1-\zeta(s))\int_{\Rd} \inprod{P(t-s, x,v,z,w)}_\gamma\ahom:\nabla^2_x \Qhom\,dzds}^2\,.$$
This bounds on this term follow from similar computations to the previous term, but we first apply \pref{thormander} to get this term into a form closer to the previous term. After applying this inequality, the remaining computations proceed in a similar fashion. Additionally, we note that by interpolating the estimates in \eref{Qhombound}, we see that 
\begin{equation}
\label{e.fracQhom}
\norm{\nabla^2 \Qhom(s,\cdot)}^2_{H^{-\sfrac{1}{6}}(\Rd)} \leq s^{-\frac{d}{2} - \frac{11}{6}} \,.
\end{equation}
Starting with the interval $[t_0, 4t_0]$, we can use \pref{thormander}, \eref{gradnasharonson}, and \eref{fracQhom} and repeat the same computation as in the previous subsection to see that
\begin{align*}
&\dashint_{\mc{Q}_T}\inp*{\int_{t_0}^{4t_0} (1-\zeta(s))\int_{\Rd} \inprod{P(t-s, x,v,z,w)}_\gamma\ahom:\nabla^2_x \Qhom\,dzds}^2\\
&\quad \leq C\dashint_{\mc{Q}_T}  \norm{P(t-s, x,v,\cdot)}_{L^2((t_0, 4t_0);H^{\sfrac{1}{6}}(\Rd; H^{-1}_\gamma))}^2 \int_{t_0}^{4t_0}s^{-\frac{d}{2} - \frac{11}{6}}\,ds \\
&\quad\leq CT^{-\frac{d}{2}}\,.
\end{align*}
Similarly, moving on to the interval $[t-2t_0, t]$, we see that
\begin{align*}
&\dashint_{\mc{Q}_T}\inp*{\int_{t-2t_0}^{t} (1-\zeta(s))\int_{\Rd} \inprod{P(t-s, x,v,z,w)}_\gamma\ahom:\nabla^2\Qhom(s,z)\,dzds}^2 \\
&\quad\leq C\dashint_{\mc{Q}_T}  \norm{P(t-s, x,v,\cdot)}_{L^2((t-2t_0, t-t_0);H^{\sfrac{1}{6}}(\Rd; H^{-1}_\gamma))}^2 \int_{t-2t_0}^{t-t_0}s^{-\frac{d}{2} - \frac{11}{6}}\,ds  \\
&\quad\leq C T^{-1} t_0^{-\frac{d}{2}+1}\,.
\end{align*}

\subsubsection*{Term 4 bound}
Here, we are looking to bound the following term
$$\dashint_{\mc{Q}_T}\inp*{\int_{\Rd\times\Rd} P(t-t_0, x,v,z,w)(P(t_0, z,w, 0, 0) - \inprod{P(t_0, z,w,0,0)}_\gamma)\,dm(z,w)}^2\,dt \,.$$
First, we notice that, using Fubini's theorem
\begin{align*}
&\dashint_{\mc{Q}_T}\inp*{\int_{\Rd\times\Rd} P(t-t_0, x,v,z,w)(P(t_0, z,w, 0, 0) - \inprod{P(t_0, z,w,0,0)}_\gamma)\,dm(z,w)}^2 \,dt \\
&= \!\dashint_{\mc{Q}_T}\inp*{\int_{\Rd\times\Rd}P(t_0, z,w, 0, 0)( P(t-t_0, x,v,z,w) - \inprod{ P(t-t_0, x,v,z,w)}_\gamma)\,dm(z,w)}^2 \,dt\,.
\end{align*}
Here, we use the Gaussian Poincar\'e inequality as well as \pref{nasharonson} and \eref{gradnasharonson} to see that
\begin{align*}
&\dashint_{\mc{Q}_T}\inp*{\int_{\Rd\times\Rd}P(t_0, z,w, 0, 0)( P(t-t_0, x,v,z,w) - \inprod{ P(t-t_0, x,v,z,w)}_\gamma)\,dm(z,w)}^2 \,dt \\
&\quad\leq \dashint_{\mc{Q}_T}\norm{ P(t_0, \cdot, 0,0)}^2_{L^2(\Rd; L^2_\gamma)}\norm{\nabla_v P(t-t_0, x,v,\cdot)}^2_{L^2(\Rd; L^2_\gamma)} \,dt \\
&\quad\leq Ct_0^{-\frac{d}{2}}\sum_{z\in (\sqrt{T})\Zd} \int_{B_{\sqrt{T}}}T^{\frac{d}{2}}\norm{\nabla_v P(t-t_0, x,v,\cdot)}^2_{\uL^2((\frac{T}{2}, T)\times B_{\sqrt{T}}(z); L^2_\gamma)} \, \\
&\quad\leq C t_0^{-\frac{d}{2}} \sum_{z\in (\sqrt{T})\Zd} T^{\frac{d}{2}-1} \int_{B_{\sqrt{T}}}\Gamma_c(T,x-z)^2 \\
&\quad\leq Ct_0^{-\frac{d}{2}}T^{-1}\,.
\end{align*}

\subsubsection*{Term 5 and beyond bound}
Finally, the bounds on all the remaining terms proceed in a similar manner. To begin, we bound the terms of the form
\begin{align*}
\int_{t_0}^t\int_{\Rd\times\Rd} P(t-s, x,v,z,w) \phi_i (\partial_t(\zeta(s)\partial_{x_i}\Qhom(s,z,w)))\,dm(z,w)ds\,.
\end{align*}
The remaining three terms will then follow from almost identical computations. 

First, we use the estimates proven in \eref{Qhombound} as well as the definition of $\zeta$ to see that
\begin{align*}
\abs{\phi_e\partial_t( \zeta \partial_e \Qhom)} &\leq \abs{\zeta' \phi_e \partial_e\Qhom} + \abs{\zeta \phi_e \partial_t\partial_e\Qhom} \\
&\leq C( t_0^{-1} \abs{\phi_e} s^{-\frac{1}{2}} \Gamma_c  + \abs{\phi_e}s^{-\frac{3}{2}}\Gamma_c)\,.
\end{align*}
Using this bound, we apply H\"older and \pref{nasharonson} as in term 1 to see that
\begin{align*}
\int_{t_0}^t \int_{\Rd} &P(t-s,x,z)\abs{\phi_e\partial_t( \zeta \partial_e \Qhom)(s,z)}\,dzds \\
&\leq  \int_{t_0}^{t-t_0} s^{-\frac{3}{2}} \inp*{\int_{\Rd} \frac{P(t-s,x,z)^2}{\Gamma_c(t-s,x-z)}\,dz}^\frac{1}{2} \inp*{\int_{\Rd} s^3 \abs{\phi_e\partial_t( \zeta \partial_e \Qhom)}^2 \Gamma_c(t-s,x-z)\,dz}^\frac{1}{2} \\
&\leq C\int_{t_0}^{t-t_0} s^{-\frac{3}{2}}\inp*{\int_{\Rd} \abs{\phi_e}^2 \Gamma_c(s,z)^2\Gamma_c(t-s,x-z)\,dz}^\frac{1}{2} \\
&\leq C\inp*{\int_{t_0}^{t-t_0} s^{-\frac{3}{2}} \inp*{\frac{t}{t-s}}^\frac{d}{4} \,ds}\Gamma_c(t,x) \\
&\leq C\inp*{\frac{t}{t_0}}^\frac{d}{4}t_0^{-\frac{1}{2}}\log \frac{t}{t_0}\Gamma_c(t,x)\,
\end{align*}
where the last two steps follow exactly as they did in the bound on the first term. Finally, to conclude the bound on this term, we see that
$$\dashint_{\mc{Q}_T} \inp*{\frac{t}{t_0}}^\frac{d}{2}t_0^{-1}\inp*{\log \frac{t}{t_0}}^2\Gamma_c(t,x)^2 \leq Ct_0^{-\frac{d}{2}-1} \inp*{\log\frac{T}{t_0}}^2\,. $$
To bound the second to last term, we can repeat the above computation to see that
\begin{align*}
\dashint_{\mc{Q}_T}&\inp*{\int_{t_0}^t \zeta(s)\int_{\Rd\times\Rd} P(t-s, x,v,z,w) (\psi_{ij}\partial_t(\zeta(s)\partial_{x_i}\partial_{x_j}\Qhom)(s,z,w))\,dm(z,w)ds}^2 \\
&\leq Ct_0^{-\frac{d}{2}-1} \inp*{\log\frac{T}{t_0}}^2\,.
\end{align*}
Finally, to bound the last term in \eref{duhamel}, we can again repeat the same computation, this time using \lref{orlicz} as well, to see that
\begin{align*}
\dashint_{\mc{Q}_T}&\inp*{\int_{t_0}^t \zeta(s)\int_{\Rd\times\Rd} P(t-s, x,v,z,w) (v_k\psi_{ij} \partial_{x_i}\partial_{x_j}\partial_{x_k} \Qhom)(s,z,w)\,dm(z,w)ds}^2 \\
&\leq Ct_0^{-\frac{d}{2}-1} \inp*{\log\frac{T}{t_0}}^2\,.
\end{align*}
Combining the bounds on all of these terms with our estimate of $\Qhom-\Phom$ yields the desired estimate stated in \tref{perhypohom}.

\section{Large Scale Regularity}
This section is devoted to giving the proof of \tref{largescale}. To begin, we prove a bound on the iterated finite differences of a solution to \eref{hypoeq}, which will give us control of the higher scale effective behavior of the function.
\begin{lem}
\label{l.fdbound}
There exists $C(d,\Lambda)<\infty$ such that, for every $m\in\N$, $R\geq m+2$, and $f\in H_\hyp^1(Q_R)$ a solution of 
$$ -\nabla_v\cdot\a\nabla_v f +v\cdot \a\nabla_v f -v\cdot\nabla_x f + \nabla H \cdot \nabla_v f = 0 \quad \text{ in } Q_R\,,$$
we have the estimate
$$\big\|D^m \hat{f}\;\big\|_{L^\infty(Q_{\sfrac{R}{2}})} \leq \inp*{\frac{Cm}{R}}^m \norm{f}_{\uL^2(Q_R; L^2_\gamma)}\,.$$
\end{lem}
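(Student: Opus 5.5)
This is the hypoelliptic analogue of \cite[Lemma 3.1]{AKS}. The structural fact it rests on is that the equation is invariant under integer translations in $x$, because $\a$ and $H$ are $\Zd$-periodic. Hence if $f$ solves the equation in $Q_R$, then $D^i f$ solves it in $Q_R\cap(Q_R-e_i)$, which contains a cube of side $R-1$. Finite differences of solutions are therefore again solutions, and the plan is to iterate a single-step estimate $m$ times on a nested sequence of cubes. The interpolating scales are chosen so the constants multiply to $(Cm/R)^m$ rather than to something like $C^m m!/R^m$.

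\textbf{Step 1: single-step estimate.} For a solution $g$ in $Q_\rho$ with $\rho\geq 2$, I would prove
\[
\norm{D^i g}_{\uL^2(Q_{\rho-s};L^2_\gamma)}\leq \frac{C}{s}\norm{g}_{\uL^2(Q_\rho;L^2_\gamma)}, \qquad 1\leq s\leq \rho/2 .
\]
The natural route is to write $D^i g=\int_0^1 e_i\cdot\nabla_x g(\cdot+\tau e_i)\,d\tau$. However, $\nabla_x g$ is not controlled in $L^2_\gamma$, only $v\cdot\nabla_x g$ in $H^{-1}_\gamma$, so I would avoid pointwise gradients.

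\emph{First option.} Combine the Caccioppoli inequality \lref{caccioppoli} (rescaled to cubes) with the Poincaré inequality \pref{hypoelliptic.poincare} on unit cubes. This gives
\[
\norm{g-(g)_{Q_1(z)}}_{L^2(Q_1(z);L^2_\gamma)}\leq C\snorm{g}_{H^1_\hyp(Q_1(z))},
\]
and summing over $z$ yields $\norm{g-\hat g}_{L^2(Q_{\rho-s})}\leq C\snorm{g}_{H^1_\hyp(Q_{\rho-s/2})}\leq Cs^{-1}\norm{g}_{L^2(Q_\rho)}$. This reduces the estimate to bounding $D^i\hat g$. Here $\hat g$ is a function on the lattice, and $D^i\hat g(z)$ is controlled by Poincaré on the two-cube rectangle $Q_1(z)\cup Q_1(z+e_i)$, which is a Lipschitz domain (smoothing it to $C^{1,1}$ changes only constants). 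This gives $|D^i\hat g(z)|\leq C\snorm{g}_{H^1_\hyp}$ on that rectangle, and Caccioppoli again bounds the sum by $Cs^{-1}\norm{g}$. Since $D^i g=D^i(g-\hat g)+D^i\hat g$, both pieces are bounded by $Cs^{-1}\norm{g}_{\uL^2(Q_\rho)}$, and the step holds provided $s\geq 1$.

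\textbf{Step 2: iteration.} Set $\rho_k=R-kR/(2m)$ for $k=0,\dots,m$, so the step size is $s=R/(2m)$. The hypothesis $R\geq m+2$ makes $s$ roughly $\geq 1/2$; I would adjust constants, or use $s=R/(4m)$ plus the unit loss from translation, to keep $s\gtrsim1$. Since $D^{\beta}f$ solves the equation on $Q_{\rho_k}$ whenever $|\beta|=k$, applying Step 1 $m$ times gives
\[
\norm{D^\beta f}_{\uL^2(Q_{R/2+1};L^2_\gamma)}\leq \Big(\frac{2Cm}{R}\Big)^m\norm{f}_{\uL^2(Q_R;L^2_\gamma)} ,
\]
with volume-normalisation factors at most $2^d$.

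\textbf{Step 3: passing to $L^\infty$.} Finite differences commute with lattice averaging, so $D^\beta\hat f=\widehat{D^\beta f}$. By Jensen, for $x\in Q_{R/2}$ we have $|\widehat{D^\beta f}(x)|\leq \norm{D^\beta f}_{L^2(Q_1(z))}$. This local norm must be compared with the average over $Q_{R/2+1}$, which costs a factor $R^{d/2}$. To avoid that loss, I would instead run the iteration of Step 2 centered at each lattice point $z$ with $|z|\leq R/2$, on cubes shrinking from $Q_{R/2}(z)\subset Q_R$ down to $Q_1(z)$. The last steps then use scales comparable to $1$ instead of $R/m$. To handle this I would make the step sizes geometric: spend $m$ steps of size $R/(4m)$ shrinking to radius $R/4$, then apply one final $L^2$-to-$L^\infty$ step for $D^m f$, a solution on $Q_{R/4}(z)$. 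For that final step I would use the following bound, obtained by iterating Caccioppoli and Poincaré as in Step 1 together with the Hörmander inequality \pref{hypoelliptic.hormander}:
\[
|\widehat{g}(z)|\leq C\norm{g}_{\uL^2(Q_{r}(z))}, \qquad r\geq 1 .
\]
This local boundedness of lattice averages is the \textbf{main obstacle}. My first attempt at it would be an induction on dyadic scales, bounding the difference of lattice averages of $g$ between consecutive scales by Caccioppoli and Poincaré. That bounds the oscillation but yields only a logarithmically growing sum. If it fails, I would fall back to using local boundedness of solutions (available via \cite{AAMN}/\cite{Z}) on unit cubes.
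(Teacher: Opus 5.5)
Your Steps 1--2 are sound and essentially match the paper's first two steps. Controlling $D^i g$ by the Poincar\'e inequality on a unit-scale $C^{1,1}$ domain (e.g.\ a ball) containing $Q_1(z)\cup Q_1(z+e_i)$, together with Caccioppoli, is a valid substitute for the paper's appeal to the H\"ormander $Q^{1/3}_{\nabla_x}$ bound. Both arguments actually need $R\geq Cm$ rather than $R\geq m+2$: the steps $R/(4m)$ must be $\gtrsim 1$, and $D^m\hat f$ on $Q_{R/2}$ must only involve $f$ in $Q_R$.

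The gap is Step 3, which you leave open. The bound $\abs{\hat g(z)}\leq C\norm{g}_{\uL^2(Q_r(z);L^2_\gamma)}$, uniformly in $r\geq 1$, is a genuinely large-scale $L^\infty$ estimate, and none of your routes yields it.
\begin{itemize}
\item Unit-scale local boundedness from \cite{AAMN,Z} only controls $g$ near $z$ by $\norm{g}_{L^2(Q_1(z);L^2_\gamma)}$, which Jensen already gives for $\hat g(z)$. The factor $r^{d/2}$ lost in passing to the normalized norm on $Q_r(z)$ is untouched.
\item The dyadic telescoping does not close either. Caccioppoli and Poincar\'e give no decay of oscillation from one scale to the next, and producing such decay is large-scale regularity, which sits downstream of this lemma (\lref{loworderlargescale} is built on it).
\item Recentering the $m$-step iteration at each $z$ cannot help. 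By AM--GM, $\prod_k (C/s_k)\geq (Cm/R)^m$ whenever $\sum_k s_k\leq R$, so the factor $R^{d/2}$ from converting $\norm{f}_{L^2(Q_R)}$ to the normalized norm survives.
\end{itemize}

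The missing idea is to spend a bounded number of extra finite differences, which your Step 2 already controls at every order. Take $k=\ceil{\frac{d+1}{2}}>\frac d2$ and apply a discrete Sobolev inequality on the lattice cube $Q_{R/2}\cap\Zd$ to $D^m\hat f$:
\[
\norm{D^m\hat f}_{L^\infty(Q_{R/2})}\leq C\sum_{j=0}^{k}R^{j}\norm{D^{m+j}\hat f}_{\uL^2(Q_{R/2})}\,.
\]
Since $D^{m+j}\hat f=\widehat{D^{m+j}f}$, Jensen and Step 2 at order $m+j$ bound the $j$-th term by
\[
CR^{j}\inp*{\frac{C(m+j)}{R}}^{m+j}\norm{f}_{\uL^2(Q_R;L^2_\gamma)}\leq C(m+k)^{k}\inp*{\frac{Cm}{R}}^{m}\norm{f}_{\uL^2(Q_R;L^2_\gamma)}\,.
\]
The factor $R^j$ from the Sobolev inequality is exactly cancelled by the extra $R^{-j}$ from the additional differences. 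The leftover polynomial factor $(m+k)^k$ is absorbed into $C^m$.

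This is precisely the paper's final step; the paper keeps only the $j=0$ and $j=k$ terms. Taking $m=0$, the same argument also proves the large-scale local boundedness of lattice averages you were looking for.
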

\begin{proof}
To begin, we show that, for $i\in\{1,\ldots, d\}$ and $1\leq r \leq R-2$,
\begin{equation}
\label{e.D1f}
\norm{D_i f}^2_{L^2(Q_r; L^2_\gamma)} \leq \frac{C}{(R-r)^2} \norm{f}_{L^2(Q_R; L^2_\gamma)}^2\,.
\end{equation}
Taking $\xi$ to be a smooth cutoff function such that $\1_{Q_r} \leq \xi \leq \1_{Q_{r+2}}$ and $\abs{\nabla \xi} \leq C$, then applying \pref{hypoelliptic.hormander} and \pref{hypoelliptic.poincare}
\begin{align*}
\norm{D_i f}^2_{L^2(Q_r; L^2_\gamma)} &= \int_{Q_r\times \Rd} (f(x+e_i, v) - f(x,v))^2\,dxd\gamma(v) \\
&\leq \norm{f}_{Q^{\sfrac{1}{3}}_{\nabla_x}(Q_r)}^2 \\
&\leq \norm{\xi f}_{Q^{\sfrac{1}{3}}_{\nabla_x}(\Rd)}^2 \\
&\leq C\norm{\xi f}_{H^1_\hyp(\Rd)}^2 \\
&\leq C\snorm{f}_{H^1_\hyp(Q_{r+2})}^2 \\
&\leq \frac{C}{(R-r)^2} \norm{f}_{L^2(Q_R; L^2_\gamma)}^2\,,
\end{align*}
where the last line follows by applying \lref{caccioppoli}. This proves \eref{D1f}.

Now we iterate the estimate \eref{D1f} to get a bound on the higher order partial difference:
\begin{equation}
\label{e.Dmf}
\norm{D^m f}_{L^2(Q_{\sfrac{3R}{4}}; L^2_\gamma)} \leq \inp*{\frac{Cm}{R}}^m \norm{f}_{L^2(Q_R; L^2_\gamma)}\,.
\end{equation}
As $\nabla H$ is $\Zd$-periodic, $D^n f$ is also a solution for any $n\leq m$. Taking $Q^n = Q_{\inp*{\frac{3}{4} + \frac{n}{4m}} R}$, we can apply \eref{D1f} to see that
$$\norm{D^{n+1}f}_{L^2(Q^{n+1}, L^2_\gamma)} \leq \frac{Cm}{R}\norm{D^nf}_{L^2(Q^n; L^2_\gamma)}\,.$$
Iterating from $n=m$ then gives \eref{Dmf}.

Finally, using this preliminary bound, we will show 
$$\big\|D^m \hat{f}\;\big\|_{L^\infty(Q_{\sfrac{R}{2}})} \leq \inp*{\frac{Cm}{R}}^m \norm{f}_{\uL^2(Q_R; L^2_\gamma)}\,.$$
Applying a discrete Sobolev embedding theorem, we see that
$$\big\|D^m \hat{f} \;\big\|_{L^\infty(Q_{\sfrac{R}{2}})} \leq \big\|D^m \hat{f}\;\big\|_{\uL^2(Q_{\sfrac{R}{2}})} + C R^{\ceil{\frac{d+1}{2}}} \big\|D^{m+\ceil{\frac{d+1}{2}}} \hat{f}\;\big\|_{\uL^2(Q_{\sfrac{R}{2}})}\,.$$
Applying Jensen and using $\eref{Dmf}$, we see that
\begin{align*}
\big\|D^m \hat{f}\;\big\|_{\uL^2(Q_{\sfrac{R}{2}})} &\leq C\norm{D^m f}_{\uL^2(Q_{\sfrac{3R}{4}}; L^2_\gamma)} \\
&\leq \inp*{\frac{Cm}{R}}^m \norm{f}_{\uL^2(Q_R; L^2_\gamma)}\,,
\end{align*}
and
\begin{align*}
R^{\ceil{\frac{d+1}{2}}} \big\|D^{m+\ceil{\frac{d+1}{2}}} \hat{f}\;\big\|_{\uL^2(Q_{\sfrac{R}{2}})} 
&\leq CR^{\ceil{\frac{d+1}{2}}} \norm{D^{m+\ceil{\frac{d+1}{2}}} f}_{\uL^2(Q_{\sfrac{3R}{4}}; L^2_\gamma)} \\
&\leq C m^{\ceil{\frac{d+1}{2}}} \inp*{\frac{Cm}{R}}^m \norm{f}_{\uL^2(Q_R; L^2_\gamma)} \\
&\leq \inp*{\frac{Cm}{R}}^m \norm{f}_{\uL^2(Q_R; L^2_\gamma)}\,,
\end{align*}
where we potentially enlarge $C$ in the final line. Combining these estimates allows us to conclude that
$$\big\|D^m \hat{f}\;\big\|_{L^\infty(Q_{\sfrac{R}{2}})} \leq \inp*{\frac{Cm}{R}}^m \norm{f}_{\uL^2(Q_R; L^2_\gamma)}\,,$$
as required.
\end{proof}

Next, we prove a version of the Poincar\'e inequality (\pref{hypoelliptic.poincare}), which will allow us to relate the above bound to control over the behavior of solutions at lower scales.
\begin{lem}
\label{l.poincarescales}
Let $r\in \N$ with $r\geq 1$, $f \in H_\hyp^1(Q_r)$, and $\g$ such that $\nabla^*_v\g = v\cdot\nabla_x f$ . Then, there exists $C(d)<\infty$ such that
$$\norm{f}_{\uL^2(Q_r; L^2_\gamma)} \leq 
\inp*{ \dashsum_{z\in \Z^d\cap Q_r} \big|\hat{f}(z) \big|^2}^\frac{1}{2} 
+ C(\norm{\nabla_v f}_{\uL^2(Q_r)} + \norm{\g}_{\uL^2(Q_r)})
\,.$$
\end{lem}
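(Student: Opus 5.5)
We decompose $Q_r$ (with $r\in\N$) into the unit cubes $z+Q_1$ for $z\in\Zd\cap Q_r$, up to the usual adjustment when $r$ is even. We then split $f$ as $f=\hat f+(f-\hat f)$, where $\hat f$ is constant on each unit cell.

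\textbf{Step 1: splitting the norm.} Since $\hat f$ is piecewise constant and equal to $\hat f(z)$ on $z+Q_1$, the triangle inequality in $\uL^2(Q_r;L^2_\gamma)$ gives
\begin{equation*}
\norm{f}_{\uL^2(Q_r;L^2_\gamma)}\le \Big(\dashsum_{z\in\Zd\cap Q_r}|\hat f(z)|^2\Big)^{\frac12}+\Big(\dashsum_{z\in\Zd\cap Q_r}\norm{f-\hat f(z)}^2_{L^2(z+Q_1;L^2_\gamma)}\Big)^{\frac12}.
\end{equation*}
The first term is exactly the first term in the claimed bound, so everything reduces to controlling the fluctuation term cell by cell.

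\textbf{Step 2: the unit-cell Poincar\'e inequality.} On each cell $z+Q_1$, the constant $\hat f(z)$ is precisely the mean $(f)_{z+Q_1}$ with respect to $dx\,d\gamma$. We apply \pref{hypoelliptic.poincare} with $U=Q_1$, translated to $z+Q_1$. Translation invariance of the hypoelliptic seminorm gives a constant $C(d)$ independent of $z$.

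There is a mild caveat here. The proposition is stated for $C^{1,1}$ domains, and a cube is only Lipschitz. Two fixes are available: cite the Lipschitz version in \cite{AAMN}, or replace each cube by a smooth domain $U_0$ with $Q_1\subset U_0\subset Q_{3/2}$. In the second case we apply Poincar\'e on $U_0$, use $\norm{f-(f)_{Q_1}}_{L^2(Q_1)}\le \norm{f-(f)_{U_0}}_{L^2(U_0)}$, and absorb the bounded overlap when summing.

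On each cell this yields
\begin{equation*}
\norm{f-\hat f(z)}^2_{L^2(z+Q_1;L^2_\gamma)}\le C\snorm{f}^2_{H^1_\hyp(z+Q_1)}.
\end{equation*}

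\textbf{Step 3: bounding the seminorm by $\nabla_v f$ and $\g$.} For each cell,
\begin{equation*}
\snorm{f}^2_{H^1_\hyp(z+Q_1)}\le \norm{\nabla_v f}^2_{L^2(z+Q_1;L^2_\gamma)}+\norm{\g}^2_{L^2(z+Q_1;L^2_\gamma)}.
\end{equation*}
This holds because, for $h\in L^2(z+Q_1;H^1_\gamma)$ with norm at most one,
\begin{equation*}
\int h\,v\cdot\nabla_x f=\int h\,\nabla_v^*\g=\int \nabla_v h\cdot\g\le \norm{\g}.
\end{equation*}
The duality pairing here is taken with respect to $dx\,d\gamma$ versus $dm$. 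Since $\sigma$ is equivalent to Lebesgue measure, this only costs a constant $C(\Lambda)$. Localization is harmless because $\g$ is defined pointwise on $Q_r$, so its restriction to each cell still satisfies $\nabla_v^*\g=v\cdot\nabla_x f$ there.

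\textbf{Step 4: summing over cells.} We sum the cellwise bounds over $z\in\Zd\cap Q_r$ and divide by $|Q_r|\simeq\#(\Zd\cap Q_r)$. This gives
\begin{equation*}
C\big(\norm{\nabla_v f}_{\uL^2(Q_r)}+\norm{\g}_{\uL^2(Q_r)}\big)
\end{equation*}
for the fluctuation term, which is the second term in the claimed bound.

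The main obstacle is the boundary-regularity issue in Step 2 together with the bookkeeping of the cell decomposition when $r$ is even. The cleanest route I would try first is the smooth enlarged domain $U_0$ with bounded overlap, accepting a slightly larger $C(d)$.
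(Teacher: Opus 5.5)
Your proposal is correct and is essentially the paper's proof: the paper writes $\norm{f}^2_{\uL^2(Q_r;L^2_\gamma)}$ as the cell average of $|\hat f(z)|^2+\norm{f-\hat f(z)}^2_{\uL^2(z+Q_1;L^2_\gamma)}$ (using that $\hat f(z)$ is the cell mean, then $\sqrt{a+b}\le\sqrt a+\sqrt b$, which is equivalent to your triangle inequality), applies \pref{hypoelliptic.poincare} on each $z+Q_1$, and bounds $\snorm{f}_{H^1_\hyp(z+Q_1)}$ by $\norm{\nabla_v f}+\norm{\g}$ as in your Step~3. One caution about the workaround you say you would try first: for cells touching $\partial Q_r$ the enlarged domain $z+U_0$ leaves $Q_r$, where $f$ and $\g$ are not defined (and no $C^{1,1}$ domain inside $Q_r$ can contain a cell at a corner of $Q_r$), so for those cells you need the Poincar\'e inequality on the cube itself, which is exactly what the paper uses without comment.
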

\begin{proof}
Applying the triangle inequality and \pref{hypoelliptic.poincare}, we see that
\begin{align*}
\norm{f}_{\uL^2(Q_r; L^2_\gamma)} &\leq \inp*{\dashsum_{z\in \Zd\cap Q_r} \big|\hat{f}(z) \big|^2 + \big\|f-\hat{f}(z) \big\|^2_{\uL^2(z+Q_1; L^2_\gamma)}}^\frac{1}{2} \\
&\leq \inp*{ \dashsum_{z\in \Z^d\cap Q_r} \big|\hat{f}(z) \big|^2}^\frac{1}{2} + C\inp*{\dashsum_{z\in \Zd\cap Q_r} \snorm{f}^2_{H^1_\hyp(z+Q_1)}}^\frac{1}{2} \\
&\leq \inp*{ \dashsum_{z\in \Z^d\cap Q_r} \big|\hat{f}(z) \big|^2}^\frac{1}{2} + C(\norm{\nabla_v f}_{\uL^2(Q_r)} + \norm{\g}_{\uL^2(Q_r)})\,.
\end{align*}
\end{proof}

These two lemmas give us all the material necessary to prove a version of \tref{largescale}, where we take $m$ to be $0$ or $1$. While additional difficulties arise in the case of larger $m$, many of the basic ideas remain consistent. As such, we first prove this restricted result before addressing the issues faced for higher order $m$.
\begin{lem}[Large-scale $C^{0,1}$ and $C^{1,1}$ Estimates]
\label{l.loworderlargescale}
There exists $C(d)<\infty$ such that, for every $R\in[2,\infty)$ and $f\in H_\hyp^1(Q_R)$ a solution of
$$-\nabla_v\cdot\a\nabla_v f +v\cdot \a\nabla_v f -v\cdot\nabla_x f + \nabla H \cdot \nabla_v f = 0 \quad \text{ in } Q_R\,, $$
we have, for every $r\in[1,R]$
\begin{equation}
\label{e.zerocase}
\norm{f- \hat{f}(0)}_{\uL^2(Q_r; L^2_\gamma)} \leq \frac{Cr}{R} \norm{f}_{\uL^2(Q_R; L^2_\gamma)}\,,
\end{equation}
and, with $\psi\in \A_1$ such that $\hat{\psi}(0) = \hat{f}(0)$ and $D\hat{\psi}(0) = D\hat{f}(0)$,
\begin{equation}
\label{e.onecase}
\norm{f-\psi}_{\uL^2(Q_r; L^2_\gamma)} \leq \frac{Cr^2}{R^2} \norm{f}_{\uL^2(Q_R; L^2_\gamma)}\,.
\end{equation}
\end{lem}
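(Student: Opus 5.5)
The plan is to combine \lref{poincarescales} with \lref{fdbound}, following \cite[Lemma 3.1]{AKS}. By scaling it suffices to treat $r\le R/4$ with $r$ an integer; for $r\ge R/4$ both estimates follow from the triangle inequality. For $r \in [R/4,R]$ we have $\norm{f}_{\uL^2(Q_r)}\le C\norm{f}_{\uL^2(Q_R)}$. Also $|\hat f(0)|\le C\norm{f}_{\uL^2(Q_R)}$ by Jensen, and $\norm{\psi}_{\uL^2(Q_r)}\le C\norm{f}_{\uL^2(Q_R)}$ by \lref{hetpolybound} together with \lref{fdbound} for $m=0,1$.

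For \eref{zerocase}, apply \lref{poincarescales} to $g:=f-\hat f(0)$, which is still a solution since constants solve the equation. Then:
\begin{itemize}
\item The gradient terms $\norm{\nabla_v f}_{\uL^2(Q_r)}+\norm{\g}_{\uL^2(Q_r)}$ are controlled by $\snorm{f}_{H^1_\hyp(Q_r)}$, using the remark after \lref{caccioppoli}.
\item Since $\snorm{f}$ is an $L^2$ quantity in $x$, I bound the averaged seminorm on $Q_r$ by $C\snorm{f}_{\uL^2}$ on a larger cube. The natural route is to apply \lref{caccioppoli} on the cube $Q_{2r}$, giving a bound $Cr^{-1}\norm{f-(f)}_{\uL^2(Q_{2r})}$. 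This is circular at scale $r$, so I plan to use the exact-scaling Caccioppoli bound at scale $R$ instead: $\snorm{f}_{H^1_\hyp(Q_r)}^2\le \snorm{f}_{H^1_\hyp(Q_{R/2})}^2 \le C R^{-2}\norm{f}^2_{L^2(Q_R)}$. Converting to averages costs a factor $(R/r)^{d}$, which is bad.
\end{itemize}
So instead I will iterate. The cleaner approach is to bound the first term of \lref{poincarescales} directly by \lref{fdbound}: for $z\in Q_r\cap\Zd$, $|\hat f(z)-\hat f(0)|\le C r\,\|D\hat f\|_{L^\infty(Q_{R/2})}\le \frac{Cr}{R}\norm{f}_{\uL^2(Q_R)}$, by summing first differences along a lattice path of length $\le Cr$. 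The remaining unit-scale Poincaré error I will control by $\snorm{f}_{H^1_\hyp}$ on unit cubes. Via \lref{caccioppoli} on cubes of size $2$, this is at most $\norm{f-\hat f(z)}$ on unit neighbours, which is in turn bounded by first differences: $\norm{D_if}_{\uL^2(Q_{r+2})}$.

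The key extra input is therefore an averaged $L^2$ bound $\norm{D f}_{\uL^2(Q_{r+2};L^2_\gamma)}\le \frac{C}{R}\norm{f}_{\uL^2(Q_R)}$. This follows from \eref{D1f}/\eref{Dmf} applied to $f$ on $Q_R$ and then to $Df$, which is again a solution, combined with the sup bound on $D\hat f$ and unit-scale Poincaré. I expect this localization, getting an $\uL^2(Q_r)$ bound rather than an $L^2(Q_{3R/4})$ bound, to be the main obstacle. My approach is to write $Df = \widehat{Df} + (Df-\widehat{Df})$, control the first piece by $L^\infty$ via \lref{fdbound}, and control the second by unit-scale Caccioppoli applied to $D^2 f$. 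I then bound the resulting terms in $L^\infty$ through the discrete Sobolev step in \lref{fdbound}, with $D^2\hat f$ of size $C R^{-2}$.

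For \eref{onecase}, set $h=f-\psi$. By \eref{correctoreval} with $k=1$ and $\ahom_\alpha=0$ for $|\alpha|\le1$, $\psi$ solves the equation, so $h$ is a solution with $\hat h(0)=0$ and $D\hat h(0)=0$. Repeating the argument, $|\hat h(z)|\le Cr^2\|D^2\hat h\|_{L^\infty(Q_{R/2})}$ by a discrete Taylor expansion along lattice paths. Since $D^2\psi$ has zero lattice average for $\psi\in\A_1$ (as in the proof of \lref{hetpolybound}), $D^2\hat h=D^2\hat f$, and \lref{fdbound} with $m=2$ gives the bound $CR^{-2}\norm{f}_{\uL^2(Q_R)}$. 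The unit-scale fluctuation terms of $h$ are handled as above, with $D h$ controlled through $D^2 f$ and the periodicity of the correctors in $\psi$.
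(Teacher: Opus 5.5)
Your preliminary steps agree with the paper: the trivial range $r\ge R/4$, the bound on $\psi$ from \lref{hetpolybound}, the estimate $|\hat f(z)-\hat f(0)|\le \frac{Cr}{R}\|f\|_{\uL^2(Q_R;L^2_\gamma)}$ obtained from \lref{fdbound} along lattice paths, and the fact that $\psi\in\A_1$ solves the equation with $D^2\psi=0$. The gap is in controlling the unit-scale term $C(\|\nabla_v f\|_{\uL^2(Q_r)}+\|\g\|_{\uL^2(Q_r)})$ from \lref{poincarescales}.

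You propose to bound the cell fluctuations $\|f-\hat f(z)\|_{L^2(z+Q_1;L^2_\gamma)}$ by integer finite differences $\|D_if\|$, but the tools you cite do not give this. Unit-scale Caccioppoli bounds $\snorm{f}_{H^1_\hyp(z+Q_1)}$ by $\|f-c\|_{L^2(z+Q_3;L^2_\gamma)}$. On each neighbouring cell this splits into $|\hat f(z+e)-\hat f(z)|$ plus the fluctuation of $f$ on that cell. By Poincar\'e, that fluctuation is again controlled only by $\snorm{f}_{H^1_\hyp}$ on that cell. Summing over cells, you get back the same unit-scale seminorms on the right-hand side with a constant of order $C3^d>1$, so the argument is circular.

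This is not a technicality: integer shifts cannot detect oscillation inside a period cell, since $Df=0$ for every $\Zd$-periodic $f$. An estimate ``fluctuation $\lesssim\|Df\|$'' for solutions would need a separate rigidity-plus-compactness argument (locally periodic solutions are constant, and bounded families of solutions are precompact in $L^2$). That argument appears neither in the paper nor in your proposal. The same circularity reappears when you control $Df-\widehat{Df}$ by ``unit-scale Caccioppoli applied to $D^2f$''. Moreover, your key extra input $\|Df\|_{\uL^2(Q_{r+2};L^2_\gamma)}\le\frac CR\|f\|_{\uL^2(Q_R;L^2_\gamma)}$ is a localized estimate of the same type as the lemma itself (it is essentially \eref{zerocase} applied to $Df$). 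So the problem has not been reduced.

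The missing idea is that the Caccioppoli route you discarded is contractive, not circular, because the Caccioppoli constant decays with the width of the buffer. Put $a(r):=\|f-\hat f(0)\|_{\uL^2(Q_r;L^2_\gamma)}$. Apply \lref{poincarescales} to $f-\hat f(0)$; the seminorm does not see constants. Then apply \lref{caccioppoli} between $Q_r$ and $Q_{r+s}$; its proof works with any constant subtracted, since $f-c$ is still a solution. For a large dimensional $s$, $r\ge s$ and $r+s\le R/2$, this gives $a(r)\le\frac{Cr}{R}\|f\|_{\uL^2(Q_R;L^2_\gamma)}+\frac12a(r+s)$.

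Iterate up to $r\approx R/2$, where $a\le C\|f\|_{\uL^2(Q_R;L^2_\gamma)}$ holds trivially. The accumulated factor $2^{-K}$, with $K\approx R/(4s)$, is smaller than $Cr/R$. The range $1\le r<s$ follows from $a(r)\le (s/r)^{d/2}a(s)$. This hole-filling iteration is the paper's proof. Your dyadic version $a(r)\le\frac{Cr}{R}\|f\|_{\uL^2(Q_R;L^2_\gamma)}+\frac{C}{r}a(2r)$ also closes once $r$ exceeds a dimensional constant.

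For \eref{onecase}, the same iteration runs with $f-\psi$ in place of $f-\hat f(0)$. This is legitimate precisely because $\psi$ solves the equation. Combine it with your bound $|\hat h(z)|\le \frac{Cr^2}{R^2}\|f\|_{\uL^2(Q_R;L^2_\gamma)}$.
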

\begin{proof}
Using the triangle inequality and applying \lref{fdbound}, we see that, for $r\leq \frac{R}{2}$
$$\sup_{z\in Q_r}\abs{\hat{f}(z)-\hat{f}(0)} \leq Cr \sup_{z\in Q_{r+1}} \abs{D\hat{f}(z)} \leq \frac{Cr}{R}\norm{f}_{\uL^2(Q_R; L^2_\gamma)}\,.$$
So, using \lref{poincarescales} and applying this bound, we see that
$$\norm{f- \hat{f}(0)}_{\uL^2(Q_r; L^2_\gamma)} \leq \frac{Cr}{R}\norm{f}_{\uL^2(Q_R; L^2_\gamma)} + C\snorm{f}_{\uH^1_\hyp(Q_r)}\,.$$
Applying \lref{caccioppoli}, we see that 
$$\snorm{f}_{H^1_\hyp(Q_r)} \leq \frac{C}{s} \norm{f-\hat{f}(0)}_{L^2(Q_{r+s}; L^2_\gamma)}\,,$$
where now we take $r\leq \frac{1}{2} R \wedge (R-s)$.
Choosing $s$ sufficiently large, so that $s\geq 2C$, we can conclude that
$$\norm{f- \hat{f}(0)}_{\uL^2(Q_r; L^2_\gamma)} \leq \frac{Cr}{R}\norm{f}_{\uL^2(Q_R; L^2_\gamma)} + \frac{1}{2} \norm{f-\hat{f}(0)}_{\uL^2(Q_{r+s}; L^2_\gamma)}\,.$$
Iterating this inequality then yields the desired result, for $r\in [1, \frac{1}{2} R \wedge (R-s)]$. Starting from $r= \frac{1}{2} R\wedge (R-s)$ and iterating down the scales gives the desired result for $r\in [1, \frac{1}{2} R\wedge (R-s)]$, provided $R \geq s+1$. However, since $s$ only depends on the dimension, by taking a large value of $C$ in \eref{zerocase}, we can remove this restriction, giving the desired result.

Now, to prove \eref{onecase}, we essentially follow the same argument. In this case, note that since $D^2\psi = 0$ and $\big(\hat{f} - \hat{\psi}\;\big)(0) = 0 = D\big(\hat{f} - \hat{\psi}\;\big)(0)$, 
$$\sup_{z\in Q_r} \abs{\big(\hat{f} - \hat{\psi}\;\big)(z)} \leq Cr^2 \sup_{z\in Q_{r+1}} \abs{D^2\hat{f}(z)} \leq \frac{Cr^2}{R^2}\norm{f}_{\uL^2(Q_R; L^2_\gamma)}\,,$$
where the last inequality follows from \lref{fdbound}. As in the previous version, applying Caccioppoli and \lref{poincarescales} with $f-\psi$ in place of $f-\hat{f}(0)$, we get that
$$\norm{f- \psi}_{\uL^2(Q_r; L^2_\gamma)} \leq \frac{Cr^2}{R^2}\norm{f}_{\uL^2(Q_R; L^2_\gamma)} + \frac{1}{2} \norm{f-\psi}_{L^2(Q_{r+s}; L^2_\gamma)}\,.$$
From here, we finish the proof as above.
\end{proof}

While this approach works perfectly fine when $m=0,1$, when $m\geq 2$, we run into some difficulties when trying to follow the same argument. In particular, we can still find some $\psi\in \A_m$ such that $D^k \hat{\psi} (0) = D^k \hat{f}(0)$ for all $1\leq k\leq m$. However, $p := L\psi$ will no longer necessarily vanish, and thus, in order to apply any bound like \lref{caccioppoli}, we must now also bound this polynomial. As $p$ is a polynomial, we make use of several properties of polynomials as well as properties of the heat kernel to bound this error.
For rest of this section, we define
$$\Gamma(t,x) = \Gamma_1(t,x) = (4\pi t)^{-\frac{d}{2}} \exp\inp*{-\frac{\abs{x}^2}{4t}}\,.$$
To begin, we prove a new version of \lref{poincarescales}, now taking into account the heat kernel.
\begin{lem}
\label{l.heatkernelpoincare}
Let $r\in\N$ and $f\in H^1_\hyp(Q_r)$. Then, there exists a constant $C(d)<\infty$ such that, for every $y\in\Rd$ and $t\in[1,\infty)$, 
$$\int_{Q_r\times\Rd} f^2 \Gamma(t, x-y) \,dm \leq \int_{Q_r} \big|\hat{f}\;\big|^2\Gamma(t, x-y)\,dx + C\int_{Q_r\times \Rd} (\abs{\nabla_v f}^2 + \abs{\g}^2) \Gamma(t+1, x-y)\,dm\,.$$
\end{lem}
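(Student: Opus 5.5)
The plan is to localize to unit cubes, exploit that $f-\hat f(z)$ has zero mean on each cube $z+Q_1$, and pay for the variation of the heat kernel across a unit cube with the shift $t\mapsto t+1$.

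\textbf{Step 1 (cube decomposition).} Since $r\in\N$, $Q_r$ is, up to a null set, a union of unit cubes $z+Q_1$ with $z\in\Zd\cap Q_r$ (or a half-integer shift of this lattice, which changes nothing below). On each cube write $f=\hat f(z)+g_z$ with $g_z:=f-\hat f(z)$. By definition of $\hat f$, $\int_{(z+Q_1)\times\Rd} g_z\,dx\,d\gamma=0$. I will treat $dm$ and $dx\,d\gamma$ as interchangeable, since $e^{-H}$ is bounded above and below. To keep exact orthogonality, I would instead define the average with respect to $d\sigma$ on each cube; only the bounded comparison between $\sigma$ and Lebesgue measure enters. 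With $w(x):=\Gamma(t,x-y)$, expanding the square gives
\begin{equation*}
\int_{(z+Q_1)\times\Rd} f^2 w\,dm=\int_{z+Q_1}|\hat f|^2 w + 2\hat f(z)\int g_z\,w\,dm+\int g_z^2\,w\,dm .
\end{equation*}

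\textbf{Step 2 (kernel comparison and Poincar\'e).} The key elementary fact is
\begin{equation*}
\sup_{x\in z+Q_1}\Gamma(t,x-y)\le C\inf_{x\in z+Q_1}\Gamma(t+1,x-y)\qquad\text{for } t\ge1 .
\end{equation*}
It follows from $(|x|+\sqrt d)^2/(t+1)-|x|^2/t\le C$, obtained by maximizing the quadratic in $|x|$. Using this fact and \pref{hypoelliptic.poincare} on the unit cube, together with the representation $\snorm{f}_{H^1_\hyp}\simeq\|\nabla_vf\|+\|\g\|$ from the remark after \lref{caccioppoli}, I would bound
\begin{equation*}
\int g_z^2w\le C\inf_{z+Q_1}\Gamma(t+1,\cdot-y)\,\|g_z\|^2_{L^2(z+Q_1;L^2_\gamma)}\le C\int_{z+Q_1}(|\nabla_vf|^2+|\g|^2)\Gamma(t+1,x-y)\,dm .
\end{equation*}
Summing over $z$ handles the third term.

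\textbf{Step 3 (cross term, the main obstacle).} Because $g_z$ has mean zero, I can replace $w$ by $w-w_z$, where $w_z$ is the cube average of $w$. Then $|w-w_z|\le\sup_{z+Q_1}|\nabla w|\le C t^{-1/2}\,\Gamma(t+1,\cdot-y)$ on the cube, using $|\nabla\Gamma|\le C\frac{|x|}{t}\Gamma$ and again the Step 2 comparison. The difficulty is that $\hat f(z)$ multiplies this factor, so a naive Young inequality produces an extra $\epsilon\int|\hat f|^2\Gamma$. That would spoil the sharp coefficient $1$ (which was the weakness of the earlier $(1+\delta)$ version).

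The first thing I would try is to avoid Young entirely and exploit an exact identity. One can write $w=w_z+(w-w_z)$ and expand $\int(\hat f+g_z)^2 w$ around the piecewise-constant weight $w_z$ instead. With that weight, $\int f^2w_z=w_z|\hat f(z)|^2+w_z\|g_z\|^2$ exactly, and $w_z|\hat f(z)|^2$ is compared with $\int_{z+Q_1}|\hat f|^2 w$, which are equal. The remainder is $\int f^2(w-w_z)$, and I would try to see whether it can be redistributed, for instance by splitting it as $\int(\hat f^2+2\hat fg_z)(w-w_z)$, where the $\hat f^2$ part vanishes because $\int(w-w_z)=0$. One must then still handle $2\hat f(z)\int g_z(w-w_z)$ plus a term $\int g_z^2(w-w_z)$ that is already fine.

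If this residual cross term cannot be absorbed, my fallback is to use the extra decay $t^{-1/2}$. The plan is to bound $|\hat f(z)|\,\|g_z\|$ by $\|f\|\,\|g_z\|$ on the cube and absorb the resulting $Ct^{-1/2}\int f^2\Gamma(t+1)$ into the left-hand side for $t$ large, after comparing $\Gamma(t+1)$ with $\Gamma(t)$ where possible. The remaining bounded range of $t$ would be handled by the constant $C$. I regard making this absorption rigorous with the constant exactly $1$ as the crux of the lemma.
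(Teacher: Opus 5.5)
Your Steps 1 and 2 are exactly the paper's proof. The comparison $\Gamma(t,x'-y)\le C\,\Gamma(t+1,x-y)$ for $|x-x'|\le\sqrt d$, $t\ge1$, combined with \pref{hypoelliptic.poincare} on each unit cube and summed over $z$, gives
\begin{equation*}
\int_{Q_r\times\Rd}\bigl|f-\hat f\,\bigr|^2\,\Gamma(t,x-y)\,dm\le C\int_{Q_r\times\Rd}\bigl(\abs{\nabla_vf}^2+\abs{\g}^2\bigr)\Gamma(t+1,x-y)\,dm .
\end{equation*}
The gap is Step 3, which you leave open, and it cannot be closed: with coefficient exactly $1$ on $\int|\hat f|^2\Gamma$ the inequality is false. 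Take $H\equiv0$ (with $\gamma$ a probability measure) and $f=a+\lambda g(x)$. Here $g\in C^\infty_c(Q_1)$ has zero mean and $\int g\,\Gamma(t,\cdot-y)\,dx\neq0$, which is possible because $\Gamma$ is not constant on $Q_1$. Then $\hat f\equiv a$ and $\nabla_vf=0$, and $\g=\lambda\nabla g$ satisfies $\nabla_v^*\g=v\cdot\nabla_xf$. The claim therefore reduces to $2a\lambda\int g\Gamma+\lambda^2\int g^2\Gamma\le C\lambda^2\int|\nabla g|^2\,\Gamma(t+1,\cdot-y)$, which fails as $\lambda\downarrow0$ when $a\int g\Gamma>0$. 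The cross term you isolated is linear in the fluctuation, while everything on the right is quadratic in it, so no choice of $C$ helps. Your remedies fail for this reason:
\begin{itemize}
\item The expansion around $w_z$ just reproduces the same cross term $2\hat f(z)\int g_z w$.
\item In the fallback, $|w-w_z|\le Ct^{-1/2}\Gamma(t+1,\cdot-y)$ is false where $|x-y|\approx t$. There $|\nabla\Gamma(t,\cdot)|=\frac{|x-y|}{2t}\Gamma(t,\cdot)$ is comparable to $\Gamma(t+1,\cdot)$.
\item The comparison $\Gamma(t+1)\le C\Gamma(t)$ fails for $|x-y|\gg t$.
\item Absorbing into the left side would in any case leave a factor $(1-\eta)^{-1}>1$ on the $\hat f$ term.
\end{itemize}
Similarly, swapping $dm$ for $dx\,d\gamma$, or redefining $\hat f$ with $d\sigma$, is only legitimate up to constants.

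What is true, and what the paper's proof actually delivers, is the lemma with a constant in front of the first term. After the display above, the triangle inequality in $L^2(\Gamma(t,\cdot-y)\,dm)$ gives $\|f\|\le\|\hat f\|+\|f-\hat f\|$. This yields the stated bound in non-squared form, or squared with a factor $2$ (or $1+\epsilon$, with $C=C(d,\epsilon)$) on $\int_{Q_r}|\hat f|^2\Gamma(t,x-y)$. That is the only form used later, in \eref{vhkpoincare}, where both terms carry a constant $C$. So drop Step 3: Steps 1--2 plus this one line are a complete proof of the correct statement.
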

\begin{proof}
We first notice that for $t\geq 1$ and $x,x',y\in\Rd$, where $\abs{x-x'}\leq \sqrt{d}$, we can compute that
\begin{equation}
\label{e.heatkernelbound}
\frac{\Gamma(t, x'-y)}{\Gamma(t+1, x-y)} = \inp*{1+\frac{1}{t}}^{\frac{d}{2}}\exp\inp*{-\frac{\abs{x'-y}^2}{4t(t+1)} + \frac{(x-x')\cdot(x+x'-2y)}{4(t+1)}} \leq C(d)\,.
\end{equation}
Therefore, combining this bound with \pref{hypoelliptic.poincare}, we have that
\begin{align*}
\int_{(z+Q_1)\times \Rd} \big| f-\hat{f}\;\big |^2 \Gamma(t, x-y)\,dm &\leq C\sup_{x\in z+Q_1}\Gamma(t, x-y) \int_{(z+Q_1)\times\Rd} \abs{\nabla_v f}^2 + \abs{\g}^2\,dm \\
&\leq C \int_{(z+Q_1)\times\Rd} (\abs{\nabla_v f}^2 + \abs{\g}^2)\Gamma(t+1, x-y)\,dm\,.
\end{align*}
Summing over $z\in Q_r$ then gives that
$$\int_{Q_r\times \Rd} \abs{f-\hat{f}\;}^2 \Gamma(t, x-y)\,dm \leq C \int_{Q_r\times\Rd} (\abs{\nabla_v f}^2 + \abs{\g}^2)\Gamma(t+1, x-y)\,dm\,.$$
Applying the reverse triangle inequality to the left hand side then yields the desired result.
\end{proof}

The following technical lemma gives us the necessary bounds on polynomials with respect to the heat kernel we need in order to adapt \lref{caccioppoli} and complete the proof of \tref{largescale}. The first two results are proved in \cite[Lemma 3.6]{AKS}, but we reproduce them here for the sake of completeness.
\begin{lem}[Polynomial Estimates]
\label{l.polyest}
For every $y\in\Rd$, $t > 0$, $m\in\N$, $p\in\P_m$, $r\geq \sqrt{64mt}$, and  $R_2\geq R_1 \geq Cm$, we have 
\begin{align}
\int_{\Rd} \inp*{\frac{\nabla(p\Gamma(t, x-y))}{\Gamma(t, x-y)}}^2 \Gamma(t, x-y) &\leq \frac{2(m+d)}{t}\int_{\Rd} p^2 \Gamma(t, x-y)\,, \label{e.heatkernelpolybound}\\
\int_{\Rd\setminus Q_r} p^2\Gamma(t, x-y) &\leq e^{-\sfrac{r^2}{16t}} \int_{\Rd} p^2\Gamma(t, x-y)\,, \label{e.polytailcutoff} \\
\int_{\Rd} \abs{\nabla p}^2 \Gamma(t, x-y) &\leq \frac{Cm}{t} \int_{\Rd} p^2\Gamma(t, x-y) \,,\label{e.polymarkov} \\
\abs{\nabla^n p(0)} &\leq \sum_{k=n}^m C^k \abs{D^k p(0)} \label{e.derivfdbound}\,, \\
\norm{p}_{L^\infty(Q_{R_2})} &\leq \inp*{\frac{CR_2}{R_1}}^m \norm{p}_{\uL^2(Q_{R_1})}\,. \label{e.polyinftybound}
\end{align}
\end{lem}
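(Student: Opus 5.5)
We prove the five estimates one at a time. The first two follow \cite[Lemma 3.6]{AKS}. By translation invariance we may take $y=0$ and write $\Gamma=\Gamma(t,\cdot)$.

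For \eref{heatkernelpolybound}, write $\nabla(p\Gamma)/\Gamma = \nabla p - \frac{x}{2t}p$, so the left side equals $\int \abs{\nabla p - \frac{x}{2t}p}^2\Gamma$. We expand the square and integrate the cross term by parts, using $\nabla\Gamma = -\frac{x}{2t}\Gamma$. This gives
$$\int \abs{\nabla p-\tfrac{x}{2t}p}^2\Gamma = \int\abs{\nabla p}^2\Gamma - \int \tfrac{\abs{x}^2}{4t^2}p^2\Gamma + \tfrac{d}{2t}\int p^2\Gamma,$$
and it also expresses $\int\abs{\nabla p}^2\Gamma$ through $\int p\,(-\Delta p + \frac{x}{2t}\cdot\nabla p)\Gamma$. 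The operator $\mathcal{O}=-\Delta+\frac{x}{2t}\cdot\nabla$ is the Ornstein--Uhlenbeck operator, self-adjoint in $L^2(\Gamma)$. Its eigenfunctions are the rescaled Hermite polynomials, with eigenvalues $k/(2t)$ on degree $k$. Hence $\mathcal{O}\le \frac{m}{2t}$ on $\P_m$, so $\int\abs{\nabla p}^2\Gamma\le\frac{m}{2t}\int p^2\Gamma$. This is \eref{polymarkov}, even with constant $\frac12$. Dropping the negative term then gives \eref{heatkernelpolybound}, with $\frac{m+d}{2t}\le\frac{2(m+d)}{t}$.

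For \eref{polytailcutoff}, on $\Rd\setminus Q_r$ we have $\abs{x}\ge r/2$, so $\Gamma(t,x)\le C e^{-r^2/(32t)}\,\Gamma(2t,x)$ up to the factor $2^{d/2}$. It then suffices to show $\int p^2\Gamma(2t)\le C^m\int p^2\Gamma(t)$. This holds by expanding $p$ in the Hermite basis of $L^2(\Gamma(t))$: the Gaussian change of variance costs at most $2^{m}$ per degree. Since $r^2\ge 64mt$, the factor $e^{-r^2/(32t)}\cdot 2^{m+d/2}$ is at most $e^{-r^2/(16t)}$ once constants are absorbed. The threshold $64$ appears designed for exactly this bookkeeping, and the remaining work is matching constants.

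For \eref{derivfdbound}, write $D_i = e^{\partial_i}-1$, so that $\partial_i=\log(1+D_i)=\sum_{j\ge1}(-1)^{j+1}D_i^j/j$ as operators on $\P_m$. The series terminates after degree $m$ because $D_i$ lowers degree. Taking $n$-fold products and evaluating at $0$ expresses $\nabla^n p(0)$ as a combination of $D^kp(0)$ for $n\le k\le m$. The coefficients are bounded by $C^k$, since the coefficients of $\log(1+z)^n$ are bounded by those of $(-\log(1-z))^n$, which grow at most like $C^k$.

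For \eref{polyinftybound}, we use that on $Q_1$ the norms $\norm{p}_{L^\infty}$ and $\norm{p}_{\uL^2}$ are equivalent with constant $C^m$ on $\P_m$. This follows via Legendre polynomials or Nikolskii's inequality, $\norm{p}_{L^\infty(Q_1)}\le (Cm)^{d/2}\norm{p}_{\uL^2(Q_1)}$. We then extend from $Q_{R_1}$ to $Q_{R_2}$ by the Remez/Chebyshev growth bound $\norm{p}_{L^\infty(Q_{R_2})}\le (CR_2/R_1)^m\norm{p}_{L^\infty(Q_{R_1})}$. The factor $(Cm)^{d/2}$ is absorbed into $C^m$. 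The main obstacle is getting the sharp constant in the tail estimate \eref{polytailcutoff}, so that no extra $C^m$ remains in front; we would handle it through the variance-doubling Hermite comparison described above.
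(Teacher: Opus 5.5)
The gap is in \eref{polytailcutoff}. On $\Rd\setminus Q_r\subset\{\abs{x}\ge r/2\}$ your comparison yields
$\int_{\Rd\setminus Q_r}p^2\Gamma(t,\cdot)\le 2^{d/2}K_m\,e^{-r^2/(32t)}\int p^2\Gamma(t,\cdot)$,
where $K_m:=\sup_{p\in\P_m}\int p^2\Gamma(2t,\cdot)/\int p^2\Gamma(t,\cdot)\ge 1$. Incidentally, $K_m\le 2^m$ is false (already $K_2\approx 4.64$ in $d=1$); only $K_m\le C^m$ holds. The real problem is that $2^{d/2}K_m e^{-r^2/(32t)}\ge e^{-r^2/(32t)}>e^{-r^2/(16t)}$ for every $r>0$. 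So no absorption of constants and no threshold on $r$ can produce \eref{polytailcutoff}. Bounding $\Gamma(t,\cdot)$ pointwise by a wider Gaussian always gives up part of the exponent, and the prefactor only makes things worse.

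More importantly, the sharp inequality you are aiming for is false as stated, with $Q_r$ the cube of side $r$. Take $d=1$, $m=1$, $p(x)=x$, $y=0$, $t=\tfrac14$, $r=4=\sqrt{64mt}$. Then $\int_{\abs{x}\ge 2}x^2\Gamma\,\big/\int x^2\Gamma=\frac{4e^{-4}}{\sqrt\pi}+\operatorname{erfc}(2)\approx 0.046$, whereas $e^{-r^2/(16t)}=e^{-4}\approx 0.018$. A second failure: for $m=0$, $p\equiv 1$, $d=3$, $r=2\sqrt t$, the left side is about $0.86>e^{-1/4}\approx 0.78$. The paper's Hermite argument does not establish it either. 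It bounds the integral over the region where every coordinate exceeds $R$, rather than over $\Rd\setminus Q_R=\{\max_i\abs{x_i}\ge R/2\}$.

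Here is what your argument does prove. Write $e^{-r^2/(32t)}=e^{-r^2/(64t)}\cdot e^{-r^2/(64t)}$ and assume $r^2\ge C(m+1)t$ with $C=C(d)$ large enough that $2^{d/2}K_m e^{-r^2/(64t)}\le 1$. This gives $\int_{\Rd\setminus Q_r}p^2\Gamma\le e^{-r^2/(64t)}\int p^2\Gamma$. That weaker statement is all the later arguments in the paper actually use; they only need the tail to be at most half of the total. You should state and prove this version in place of \eref{polytailcutoff}.

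The rest of your proposal is sound and takes a genuinely different route from the paper:
\begin{itemize}
\item \eref{polymarkov} and \eref{heatkernelpolybound}: your Ornstein--Uhlenbeck spectral bound gives \eref{polymarkov} directly with constant $\frac{m}{2t}$, and then \eref{heatkernelpolybound} follows. The paper instead proves \eref{heatkernelpolybound} via the Hermite recursion and derives \eref{polymarkov} from it together with \eref{polytailcutoff}.
\item \eref{derivfdbound}: your identity $\partial_i=\log(1+D_i)$ is the operator form of the paper's Newton-polynomial expansion.
\item \eref{polyinftybound}: Nikolskii plus Chebyshev growth gives it for all $R_2\ge R_1>0$. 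The paper's heat-kernel proof again goes through \eref{polytailcutoff}.
\end{itemize}
So your versions of these four estimates do not depend on the problematic one. There are two harmless slips. In your expansion for \eref{heatkernelpolybound}, the $\abs{x}^2$ terms cancel exactly, leaving $\int\abs{\nabla p}^2\Gamma+\frac{d}{2t}\int p^2\Gamma$ with no negative term to drop. And the algebraic Nikolskii factor on a cube is of order $m^d$, not $m^{d/2}$.
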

\begin{proof}\
We begin by proving \eref{heatkernelpolybound}. Up to scaling, it suffices to consider the case where $t = \frac{1}{4}$ and $y=0$, and we let $\Gamma_0(x) := \Gamma(\sfrac{1}{4}, x)$. Furthermore, we let $h_\alpha(x) := (-1)^{\abs{\alpha}} \Gamma_0^{-1} \partial^\alpha \Gamma_0(x)$, for $\alpha$ a multi-index, denote the physicist Hermite polynomials. A quick computation from this definition shows that $\partial_{x_i}(h_\alpha \Gamma_0(x)) = -h_{\alpha+e_i} \Gamma_0(x)$. Repeatedly integrating by parts and using this equation, we see that, for multi-indices $\alpha$ and $\beta$ with $\abs{\beta}\geq \abs{\alpha}$,
$$\int_{\Rd} h_{\alpha}(x)h_{\beta}(x) \Gamma_0(x)\,dx = \int_{\Rd} \partial^\beta h_{\alpha}(x) \Gamma_0(x)\,dx = 2^{\abs{\alpha}} \alpha! \1_{\alpha = \beta}\,,$$
that is the Hermite polynomials are orthogonal with respect to the $L^2$ inner product with weight $\Gamma_0$. Using this, we can represent any $p\in\P_m$ by $p = \sum_{\abs{\alpha} \leq m} c_{\alpha} h_{\alpha}$, where $c_{\alpha} = (2^{\abs{\alpha}} \alpha!)^{-1} \int_{\Rd} p(x) h_\alpha(x) \Gamma_0(x)\,dx.$ Using this representation, we can see that
\begin{align*}
\int_{\Rd} \inp*{\frac{\nabla(p\Gamma_0)}{\Gamma_0}}^2 \Gamma_0 &\leq \sum_{i=1}^d \int_{\Rd} \abs*{\sum_{\abs{\alpha}\leq m} c_\alpha \partial_{x_i}(h_\alpha \Gamma_0)}^2 \Gamma_0^{-1} \\
&= \sum_{i=1}^d \int_{\Rd} \abs*{\sum_{\abs{\alpha}\leq m} c_\alpha h_{\alpha+e_i}}^2 \Gamma_0 \\
&= \sum_{i=1}^d \sum_{\abs{\alpha}\leq m}\int_{\Rd}  c_\alpha^2 h_{\alpha+e_i}^2 \Gamma_0 \\
&= \sum_{i=1}^d \sum_{\abs{\alpha}\leq m}2(\alpha_i + 1)\int_{\Rd}  c_\alpha^2 h_{\alpha}^2 \Gamma_0 \\
&\leq 2(m+d) \int_{\Rd} p^2 \Gamma_0\,,
\end{align*}
which is \eref{heatkernelpolybound}.

Next, we prove \eref{polytailcutoff}. Using the notation of the previous paragraph, we can simplify the recurrence for the Hermite polynomials used above to see that,
$$h_{\alpha+e_i} = 2x_i h_\alpha - 2\alpha_i h_{\alpha - e_i}\,,$$
where we set $h_\beta = 0$ if any of the indices of $\beta$ are negative. Inducting on this formula, we see that, provided $\abs{x}^2\geq 4\abs{\alpha}$, we have that
$$\abs{h_\alpha(x)} \leq \prod_{i=1}^d \abs{4x_i}^{\alpha_i}\,.$$
Furthermore, as $\abs{\alpha}\leq m$ implies that $0\leq \abs{\alpha_i}\leq m$ for all $i\in\{1, \ldots, d\}$, we have the following bound 
$$\abs{\{\alpha\in\Nd: \abs{\alpha}\leq m\}} \leq (m+1)^d \leq (2m)^d\,.$$
Then, for $R\geq \sqrt{4m}$, we can use these bounds to see that
\begin{align*}
\int_{\Rd\setminus Q_R} p^2 \Gamma_0 &= \int_{\Rd\setminus Q_R} \inp*{\sum_{\abs{\alpha}\leq m} c_\alpha h_\alpha}^2 \Gamma_0 \\
&\leq (2m)^d\sum_{\abs{\alpha}\leq m} c_\alpha^2\int_{\Rd\setminus Q_R} h_\alpha^2\Gamma_0 \\
&\leq (2m)^d\sum_{\abs{\alpha}\leq m} 4^{2\abs{\alpha}}c_\alpha^2 \prod_{i=1}^d\int_R^\infty\abs{x_i}^{2\alpha_i} e^{-x_i^2} \,dx_i\,.
\end{align*}
A quick calculus computation shows that, for any $n\in\N$ and $t\geq 0$,
$$t^n e^{-\frac{t}{2}} \leq \inp*{\frac{2n}{e}}^n \leq 2^n n!\,.$$
Then, changing variables and using this inequality, we see that 
\begin{align*}
\int_R^\infty t^{2n} e^{-t^2}\,dt &= \frac{1}{2}\int_{R^2}^\infty s^{n-\frac{1}{2}} e^{-s}\,ds \\
&\leq \frac{2^n n!}{2R} \int_{R^2}^\infty e^{-\frac{s}{2}}\,ds \\
&= \frac{2^n n!}{R} e^{-\sfrac{R^2}{2}}\,. 
\end{align*}
So, continuing where we left off above, this gives that
\begin{align*}
\int_{\Rd\setminus Q_R} p^2\Gamma_0 &\leq (2m)^d16^m\sum_{\abs{\alpha}\leq m}c_\alpha^2 \prod_{i=1}^d\int_R^\infty\abs{x_i}^{2\alpha_i} e^{-x_i^2} \,dx_i \\
&\leq \inp*{\frac{2m}{R}}^d16^m e^{-\sfrac{dR^2}{2}} \sum_{\abs{\alpha}\leq m} 2^{\abs{\alpha}}\alpha! c_\alpha^2\,.
\end{align*}
Using the orthogonality of the Hermite polynomials, we see that 
$$\int_{\Rd} p^2 \Gamma_0 = \sum_{\abs{\alpha}\leq m} 2^{\abs{\alpha}}\alpha! c_\alpha^2\,.$$
Finally, to conclude, we note that for $R\geq \sqrt{16m}$, we can use that $2m \leq e^m$ to see that 
$$\inp*{\frac{2m}{R}}^d16^m e^{-\sfrac{dR^2}{2}} \leq e^{(d+\log16)m - \sfrac{dR^2}{2}} \leq e^{4dm - \sfrac{dR^2}{2}} \leq e^{-\sfrac{R^2}{4}}\,,$$
which completes the proof of \eref{polytailcutoff}.

Now, to prove \eref{polymarkov}, we first note that, using the product rule and bound $\abs{\nabla \Gamma(t, \cdot)}^2$, 
\begin{align*}
\abs{\nabla p}^2 \Gamma(t,\cdot) &\leq \frac{2\abs{\nabla(p\Gamma(t,\cdot))}^2}{\Gamma(t,\cdot)^2} \Gamma(t,\cdot) + \frac{2p^2 \abs{\nabla \Gamma(t,\cdot)}^2}{\Gamma(t,\cdot)^2} \Gamma(t,\cdot) \\
&\leq \frac{2\abs{\nabla(p\Gamma(t,\cdot))}^2}{\Gamma(t,\cdot)^2} \Gamma(t,\cdot) + \frac{p^2 \abs{x}^2}{2t^2} \Gamma(t,\cdot)\,.
\end{align*}
The first term we bound by \eref{heatkernelpolybound} and for the second term, we can use \eref{polytailcutoff} to see that
\begin{align*}
\int_{\Rd} \frac{p^2 \abs{x}^2}{2t^2} \Gamma(t,\cdot) &\leq \int_{Q_{\sqrt{64(m+2)t}}} \frac{\abs{x}^2 p^2}{t^2} \Gamma(t,\cdot) \\
&\leq \frac{64(m+2)t}{t^2} \int_{\Rd} p^2 \Gamma(t,\cdot)\,.
\end{align*}
This gives \eref{polymarkov}.

Continuing, we will now prove \eref{derivfdbound}. We notice that this bound is essentially proved for homogenous polynomials in \eref{derivdiffbound}, so it only remains to extend this result to arbitrary $p\in \P_m$. While inhomogeneous $p$ no longer satisfy the representation formula that we used in proving \eref{derivdiffbound}, we will present a new representation formula that does work in this case and use this to derive the bound. 

We define the Newton polynomials for some $k\in\N$ by 
$$N_k(x) = \frac{1}{k!}\prod_{j=0}^{k-1} (x-j)\,,$$
with $N_0 = 1$. An immediate computation shows that 
$$DN_k(x) = N_{k-1}(x)\,.$$
In particular, defining the multivariate Newton polynomials by 
$$N_\alpha(x_1, \ldots, x_d) = \prod_{i=1}^d N_{\alpha_i}(x_i)\,,$$
we see that for multi-indices $\alpha, \beta$, 
$$D^\beta N_\alpha (x) = N_{\alpha - \beta}\,.$$
Using this property, it follows that any $p\in \P_m$ can be represented as 
$$p(x) = \sum_{i=0}^m \sum_{\abs{\alpha} = i} D^\alpha p(0) N_\alpha(x)\,.$$
So, taking the derivative of this expression, we see that
$$\partial^\beta p(x) = \sum_{i=\abs{\beta}}^m \sum_{\abs{\alpha} = i} D^\alpha p(0) \partial^\beta N_\alpha(x)\,.$$
To complete the proof, it only remains to bound $\partial^\beta N_\alpha(0)$. We claim that $\abs{\partial^\beta N_\alpha(0)} \leq \binom{\alpha}{\beta}.$ From the definition of the multivariate Newton polynomials, it suffices to show this for the single dimensional polynomials. By definition, $N_k(x) = \frac{1}{k}(x-k+1)N_{k-1}(x)$, so we can take the derivative of this to see that 
$$\partial^j N_k(x)= \frac{j}{k} \partial^{j-1} N_{k-1}(x) + \frac{1}{k}(x-k+1)\partial^j N_{k-1}(x)\,.$$
So, inducting on $j$ and $k$, with $j\leq k$, we see that
\begin{align*}
\abs{\partial^j N_k(0)} &\leq \frac{j}{k} \partial^{j-1} \abs{N_{k-1}(0)} + \frac{k-1}{k}\abs{\partial^j N_{k-1}(0)} \\
&\leq \frac{j}{k} \binom{k-1}{j-1} + \frac{k-1}{k}\binom{k-1}{j} \\
&= \binom{k}{j}\inp*{\frac{j^2}{k^2} + \frac{(k-1)(k-j)}{k^2}} \\
&=\binom{k}{j}\frac{k^2 - (j+1)(k-j)}{k^2} \\
&\leq \binom{k}{j}\,.
\end{align*}
So, using this bound and that $\binom{k}{j} \leq C^k$, we get \eref{derivfdbound}.

Finally, to prove \eref{polyinftybound}, we will first use \eref{polymarkov} to show that for $k\in\{0,\ldots, m\}$ and $r>0$,
$$\norm{\nabla^k p}_{\uL^2(Q_r)} \leq C \inp*{\frac{Cm}{r}}^k \norm{p}_{\uL^2(Q_{2r})}\,.$$
Noting that, for $x\in Q_r$
$$\int_{Q_r} \Gamma(\sfrac{r^2}{64m}, x-y) \,dy \geq c\,,$$
for some dimensional constant $c$, we can then apply \eref{polytailcutoff} and \eref{polymarkov} to see that
\begin{align*}
\dashint_{Q_r} \abs{\nabla^k p(x)}^2\,dx &\leq C\dashint_{Q_r} \abs{\nabla^k p(x)}^2 \int_{Q_r} \Gamma(\sfrac{r^2}{64m}, x-y)\,dydx \\
&\leq C\inp*{\frac{Cm}{r}}^{2k} \dashint_{Q_r} \int_{\Rd} p(x)^2 \Gamma(\sfrac{r^2}{64m}, x-y)\,dydx \\
&\leq C\inp*{\frac{Cm}{r}}^{2k} \dashint_{Q_r} \int_{y+Q_r} p(x)^2 \Gamma(\sfrac{r^2}{64m}, x-y)\,dydx \\
&\leq C\inp*{\frac{Cm}{r}}^{2k} \dashint_{Q_{2r}}p(x)^2 \int_{\Rd}\Gamma(\sfrac{r^2}{64m}, x-y)\,dydx \\
&\leq C\inp*{\frac{Cm}{r}}^{2k} \dashint_{Q_{2r}}p(x)^2 \,dx\,,
\end{align*}
which gives the desired bound. Then, applying the Sobolev inequality and using this identity, we see that, for $r\geq Cm$, 
$$\norm{\nabla^k p}_{L^\infty(Q_{\sfrac{r}{2}})} \leq \norm{\nabla^k p}_{\uL^2(Q_{\sfrac{r}{2}})} + Cr^{\ceil{\frac{d+1}{2}}} \norm{\nabla^{k + \ceil{\frac{d+1}{2}}} p}_{\uL^2(Q_{\sfrac{r}{2}})} \leq Cm^{\ceil{\frac{d+1}{2}}}\inp*{\frac{Cm}{r}}^k \norm{p}_{\uL^2(Q_r)}\,.$$
To conclude, we can use that $p(x) = \sum_{\abs{\alpha}\leq m} \frac{1}{\alpha!}\partial^\alpha p(0) x^\alpha$ and apply this inequality to see that, for $R_2\geq R_1 \geq Cm$,
\begin{align*}
\norm{p}_{L^\infty(Q_{R_2})} &\leq \sum_{k=0}^m \inp*{\frac{CR_2}{k+1}}^k \abs{\nabla^k p(0)} \\
&\leq C^m \sum_{k=0}^m \inp*{\frac{CR_2}{k+1}}^k \inp*{\frac{k+1}{R_1}}^k \norm{p}_{\uL^2(Q_{R_1})} \\
&\leq \inp*{\frac{CR_2}{R_1}}^m \norm{p}_{\uL^2(Q_{R_1})}\,,
\end{align*}
which gives the desired bound.
\end{proof}

We now have all the necessary ingredients to prove the necessary adaptation to \lref{caccioppoli}.
\begin{lem}[Polynomial Lemma]
\label{l.polylem}
For each $\delta\in(0,1]$, there exists $C<\infty$ and $c>0$ such that, for every $m\in\N$, $R\geq Cm$, $t\in [Cm, R]$, $y\in Q_{\sfrac{R}{2}}$, $p\in\P_m$ and $f\in H^1_\hyp(Q_R)$ a solution of
\begin{equation}
\label{e.pdepolylemma}
-\nabla_v\cdot\a\nabla_v f + v\cdot\a\nabla_v f - v\cdot\nabla_x f + \nabla H(x)\cdot \nabla_vf  = p \quad \text{ in } Q_R\,,
\end{equation}
we have the estimate
\begin{align*}
 \int_{Q_{\sfrac{3R}{4}}} (\abs{\nabla_v f}^2 + \abs{\g}^2 + p^2) \Gamma(t,x-y) &\leq \delta \int_{Q_{\sfrac{3R}{4}}\times \Rd} f^2 \Gamma(t+C,x-y)\,dxd\gamma \\
 &\quad+ \exp(-cR)\int_{Q_R\times \Rd} f^2\,dxd\gamma\, . \nonumber
\end{align*}
\end{lem}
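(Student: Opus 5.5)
Since this is the analogue of \cite[Lemma 3.7]{AKS}, the plan is a weighted Caccioppoli inequality with the heat kernel $\Gamma(t,\cdot-y)$ as the weight. The polynomial $p$ is then controlled by testing the equation against $p$ times the same weight. Let $\xi$ be a smooth cutoff with $\1_{Q_{3R/4}}\le\xi\le\1_{Q_{7R/8}}$ and $|\nabla\xi|\le C/R$, and set $\eta(x):=\xi(x)^2\Gamma(t,x-y)$. We will use two test functions in \eref{weaksoldef2} (with right-hand side $p$): first $g=\eta f$, then $g=\eta p$.

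\emph{Step 1 ($\nabla_v f$ estimate).} Test with $g=\eta f$ and argue as in \lref{caccioppoli}. The transport term equals $-\int f^2\,v\cdot\nabla\eta/2$ modulo the $\nabla H$ contribution, which integrates by parts exactly as before. It can be written as $\int f\,\nabla_x\eta\cdot\nabla_v f$, and then bounded by Cauchy--Schwarz:
$$\tfrac12\int\eta|\nabla_v f|^2+C\int \frac{|\nabla\eta|^2}{\eta}f^2.$$
Since $|\nabla\Gamma|^2/\Gamma\le C\frac{|x-y|^2}{t^2}\Gamma$, the weight $|x-y|^2t^{-2}$ is small where $|x-y|\ll t$. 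This requires some care; the AKS trick is to use $\Gamma(t+C)$ on the right together with a large parameter. The contribution of $\nabla\xi$ is supported on $Q_{7R/8}\setminus Q_{3R/4}$. There $\Gamma(t,x-y)\le t^{-d/2}e^{-cR^2/t}\le e^{-cR}$ because $t\le R$ and $|x-y|\ge R/8$, which produces the term $e^{-cR}\int_{Q_R}f^2$. The source term $\int \eta f p$ is bounded by $\epsilon\int\eta p^2+\epsilon^{-1}\int\eta f^2$, so everything reduces to bounding $\int\eta p^2$. The $\g$ term is then handled by duality exactly as in \lref{caccioppoli}, giving $\int\eta|\g|^2\le C\int\eta|\nabla_v f|^2+{}$(the same errors).

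\emph{Step 2 (bounding $p$).} Test with $g=\eta p$, which is independent of $v$. Then $\nabla_v g=0$, and the equation gives
$$\int \eta p^2\,dm=\int \eta p\,Bf\,dm=-\int f\,v\cdot\nabla_x(\eta p)\,dm+\dots$$
after moving $v\cdot\nabla_x$ onto the test function. Equivalently, we pair $\g$ against $\nabla_v(v\cdot\nabla_x(\eta p))=\nabla_x(\eta p)$. This yields $\int\eta p^2\le \|\g\|_{L^2(\eta)}\,\|\nabla_x(\eta p)/\eta\|_{L^2(\eta)}$. By \eref{heatkernelpolybound} and \eref{polymarkov}, together with the cutoff error, the second factor is at most $(Cm/t)^{1/2}\big(\int\Gamma p^2\big)^{1/2}$ plus an exponentially small term. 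Since $t\ge Cm$, we get $\int\eta p^2\le \frac{Cm}{t}\int\eta|\g|^2+\text{tail}$. To make the tails legitimate, we need $\int_{Q_{7R/8}}p^2\Gamma$ to control $\int_{\Rd}p^2\Gamma$, up to $e^{-cR}$. For this we use \eref{polytailcutoff}, valid because $R\ge\sqrt{64mt}$ follows from $t\le R$ and $R\ge Cm$.

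\emph{Step 3 (absorbing).} Combining Steps 1 and 2, the $p$ term is small relative to $|\g|^2$ and $|\nabla_v f|^2$, since $Cm/t\le$ a small constant when the $C$ in $t\ge Cm$ is large. It can therefore be absorbed. The remaining right-hand side is $\int \big(\tfrac{|x-y|^2}{t^2}+\epsilon^{-1}\big)\xi^2 f^2\Gamma$, and this term is the main obstacle: it is not small. To obtain the factor $\delta$, I would follow AKS. Replace $f$ by $f-\hat f$ in the $L^2$ terms; this is allowed since the equation only sees $\nabla_v f$ and $Bf$, and $p$ pairs with constants only through a lower-order term. Then apply \lref{heatkernelpoincare} to trade $\int f^2\Gamma$ for $\int|\hat f|^2\Gamma$ plus gradient terms with weight $\Gamma(t+1)$. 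The shift $\Gamma(t)\to\Gamma(t+C)$, via \eref{heatkernelbound}, accounts for the constant $C$ in the statement. Failing a clean absorption, I would instead run the test-function argument with the weight $\Gamma(t)^{1-\theta}\Gamma(t+C)^{\theta}$ and let $t\ge C(\delta)m$ provide the smallness $\delta$. This is the step I expect to require the most adjustment.
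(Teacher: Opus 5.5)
Your Steps 1 and 2 follow the paper's route: test with $\eta f$ and with $\eta p$, control $\nabla(p\Gamma)/\Gamma$ by \eref{heatkernelpolybound}, and make the cutoff terms exponentially small. However, the argument does not close at Step 3, which is exactly where the factor $\delta$ has to come from.

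The missing idea is an elementary pointwise comparison of heat kernels: for $0<s\le t$,
\[
\frac{\abs{\nabla\Gamma(t,x-y)}^2}{\Gamma(t,x-y)}=\frac{\abs{x-y}^2}{4t^2}\,\Gamma(t,x-y)\le\frac{C(d)}{s}\,\Gamma(t+s,x-y)\,.
\]
This follows from $\Gamma(t,z)/\Gamma(t+s,z)\le 2^{d/2}\exp\bigl(-s\abs{z}^2/(8t^2)\bigr)$ and $\sup_{a>0}ae^{-a}<1$. Taking $s=C(\delta)$ large turns your ``non-small'' term $\int\frac{\abs{x-y}^2}{t^2}\xi^2f^2\Gamma(t,x-y)$ directly into $\frac{\delta}{4}\int f^2\Gamma(t+C,x-y)$. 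That is precisely what the statement allows on the right-hand side, so nothing has to be absorbed or traded away. This is why the lemma is stated with $\Gamma(t+C)$ and $C=C(\delta)$.

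Your remark in Step 1 about using $\Gamma(t+C)$ with a large parameter points this way, but neither of your Step 3 alternatives implements it.
\begin{itemize}
\item The first is unjustified: the Caccioppoli error $\int f\,\nabla_v f\cdot\nabla_x\eta\,dm$ is not invariant under $f\mapsto f-\hat{f}$, because $\hat{f}$ is only piecewise constant in $x$. In any case \lref{heatkernelpoincare} returns the gradient terms with a fixed, non-small constant and a later-time weight. Feeding it back into the Caccioppoli estimate is therefore circular and cannot generate $\delta$.
\item In the second, the smallness cannot come from $t\ge C(\delta)m$. One computes $\sup_z\frac{\abs{z}^2}{t^2}\frac{\Gamma(t,z)}{\Gamma(t+C,z)}=\bigl(1+\frac{C}{t}\bigr)^{\frac d2+1}\frac{4}{eC}$, which stays of order $1/C$ as $t\to\infty$. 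It is the size of the time shift, not of $t$, that gives $\delta$.
\end{itemize}

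Two smaller corrections are also needed.

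In Step 2 the correct identity is $\int\eta p^2\,dm=\int\nabla_v f\cdot\nabla_x(\eta p)\,dm$. This holds because for $v$-independent $g$ one has $\int g\,Bf\,dm=\int f\,v\cdot\nabla_x g\,dm=\int\nabla_v f\cdot\nabla_x g\,dm$. So $\nabla_v f$, not $\g$, is paired with $\nabla_x(\eta p)$. If $\nabla_v^*\g=Bf$, one would get $\int\eta p\,Bf\,dm=\int\nabla_v(\eta p)\cdot\g\,dm=0$; in fact $\inprod{Bf}_\gamma=p$, so $Bf$ is not of this form. Together with \eref{heatkernelpolybound} and \eref{polytailcutoff}, the corrected identity gives $\int\eta p^2\,dm\le\frac{Cm}{t}\int\eta\abs{\nabla_v f}^2\,dm$.

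In Step 1 the Young split must put the small coefficient on $f^2$, i.e.\ $\int\eta fp\,dm\le\frac{\delta}{8}\int\eta f^2\,dm+\frac{2}{\delta}\int\eta p^2\,dm$. The resulting term $\frac{Cm}{\delta t}\int\eta\abs{\nabla_v f}^2\,dm$ is absorbed once $t\ge C\delta^{-1}m$. This is where $t\ge C(\delta)m$ is really used.

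With these changes and the pointwise bound, your argument becomes the paper's proof. Your absorption of the $p$-tails via \eref{polytailcutoff} is fine. It even avoids the separate bound \eref{polylinftybound} on $p$ that the paper uses to control $e^{-cR}\int_{Q_R}p^2$.
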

\begin{proof}
Take $\delta\in(0,1]$ and define $\psi$ to be a smooth cutoff function satisfying 
$$\1_{Q_{\sfrac{3R}{4}}} \leq \psi \leq \1_{Q_R} \quad \text{ and } \quad \abs{\nabla \psi} \leq CR^{-1}. $$
Testing \eref{pdepolylemma} with $f\psi^2\Gamma(t,x-y)$, integrating by parts twice and using the ellipticity of $\a$, we see that
\begin{align*}
\int_{Q_R\times \Rd} &pf\psi^2 \Gamma(t,x-y)\,dm \\
&= \int_{Q_R\times\Rd} (\psi^2 \nabla_v f \cdot\a\nabla_v f - f\psi^2 (v\cdot\nabla_x f - \nabla H(x)\cdot \nabla_vf))\Gamma(t,x-y) \,dm \\
&\geq \int_{Q_R\times\Rd} \psi^2 \abs{\nabla_v f}^2\Gamma(t,x-y) + \frac{1}{2}f^2 v\cdot \nabla_x(\psi^2\Gamma(t,x-y))  \,dm \\
&= \int_{Q_R\times\Rd} \psi^2 \abs{\nabla_v f}^2\Gamma(t,x-y) + \frac{1}{2}f \nabla_v f \cdot \nabla_x(\psi^2\Gamma(t,x-y))  \,dm \\
&\geq \frac{1}{2} \int_{Q_R\times \Rd} \abs{\nabla_v f}^2 \psi^2\Gamma(t,x-y)\,dm \\
&\quad
- \int_{Q_R\times \Rd} f^2\Gamma(t,x-y)\inp*{ \psi^2 \frac{\abs{\nabla \Gamma(t,x-y)}^2}{\Gamma(t,x-y)^2} + \abs{\nabla \psi}^2}\,dm\,.
\end{align*}
Applying Young's inequality to the left hand side above and rearranging, we see that
\begin{align}
\label{e.testfexp}
\int_{Q_R\times \Rd} \abs{\nabla_v f}^2 \psi^2 &\Gamma(t,x-y)\,dm \\
&\leq \int_{Q_R\times \Rd} f^2\inp*{\frac{\delta}{8} \psi^2 + C\psi^2 \frac{\abs{\nabla \Gamma(t,x-y)}^2}{\Gamma(t,x-y)^2} + C\abs{\nabla\psi}^2} \Gamma(t,x-y)\,dm \nonumber\\
&\quad+ \frac{C}{\delta}\int_{Q_R} p^2\psi^2 \Gamma(t,x-y)\,dx\,.\nonumber
\end{align}
Testing \eref{pdepolylemma} instead with $p\psi^2 \Gamma(t,x-y)$ and performing the same computation, we see that
\begin{align}
\label{e.testpexp}
\int_{Q_R} p^2\psi^2 \Gamma(t,x-y)\,dx
&\leq \frac{\delta}{16} \int_{Q_R\times \Rd} \abs{\nabla_v f}^2 \psi^2 \Gamma(t,x-y)\,dm \\
&\quad+ \frac{C}{\delta} \int_{Q_R} \inp*{\frac{\abs{\nabla(p\Gamma(t,x-y))}^2}{\Gamma(t,x-y)^2} 
+ p^2\abs{\nabla\psi}^2} \Gamma(t,x-y)\,dx\,.\nonumber
\end{align}
Combining \eref{testfexp} and \eref{testpexp}, we see that
\begin{align}
\label{e.testingerr}
\int_{Q_R\times \Rd} \inp*{\abs{\nabla_v f}^2 + \tfrac{1}{\delta} p^2}&\psi^2\Gamma(t,x-y)\,dm \\
&\leq \int_{Q_R\times\Rd} f^2\inp*{\frac{\delta}{4} + C\frac{\abs{\nabla \Gamma(t,x-y)}^2}{\Gamma(t,x-y)^2}} \psi^2\Gamma(t,x-y)\,dm \nonumber \\
&\quad+ \frac{C}{\delta^2} \int_{Q_R} \inp*{\frac{\abs{\nabla(p\Gamma(t,x-y))}^2}{\Gamma(t,x-y)^2}}\psi^2 \Gamma(t,x-y)\,d\sigma \nonumber\\
&\quad+ C \int_{Q_R\times \Rd} \inp*{f^2 + \tfrac{1}{\delta^2}p^2}\abs{\nabla\psi}^2 \Gamma(t,x-y)\,dm \nonumber \,.
\end{align}
Now we estimate the terms on the right hand side of \eref{testingerr}

Beginning with the first and third term, we can compute that, for $s\in(0,t]$, 
$$
\frac{\abs{\nabla \Gamma(t,x-y)}^2}{\Gamma(t,x-y)^2} \frac{\Gamma(t,x-y)}{\Gamma(t+s,x-y)} \leq \frac{\abs{x-y}^2}{4t^2} \frac{\Gamma(t,x-y)}{\Gamma(t+s,x-y)} \leq \frac{2}{s} \inp*{\frac{s\abs{x-y}^2}{8t^2} \exp\inp*{-\frac{s\abs{x-y}^2}{8t^2}}} \leq \frac{1}{s}\,.
$$
Thus, we can find a sufficiently large $C$ so that for any $y\in Q_{\sfrac{R}{2}}$ and $t\in[C,R]$, by the above computation, 
$$C\frac{\abs{\nabla \Gamma(t,x-y)}^2}{\Gamma(t,x-y)^2} \frac{\Gamma(t,x-y)}{\Gamma(t+C,x-y)} \leq \frac{\delta}{4}\,.$$
Consequently, we now have that
\begin{align}
\label{e.errbound1}
\int_{Q_{\sfrac{3R}{4}}\times\Rd} f^2\inp*{\frac{\delta}{4} + C\frac{\abs{\nabla \Gamma(t,x-y)}^2}{\Gamma(t,x-y)^2}} &\psi^2\Gamma(t,x-y)\,dm \\
&\leq
\frac{\delta}{2} \int_{Q_{\sfrac{3R}{4}}\times\Rd} f^2 \Gamma(t+C,x-y)\,dm\,. \nonumber
\end{align}
As $y\in Q_{\sfrac{R}{2}}$ and $\nabla \psi$ is supported in $Q_R\setminus Q_{\sfrac{3R}{4}}$, we can use bound on $\Gamma(t,x-y)$ for $x\in Q_R\setminus Q_{\sfrac{3R}{4}}$ to see that
\begin{align}
\label{e.errbound2}
\int_{Q_R\setminus Q_{\sfrac{3R}{4}}\times\Rd} &f^2\inp*{\frac{\delta}{2} 
+ C\frac{\abs{\nabla \Gamma(t,x-y)}^2}{\Gamma(t,x-y)^2}} \psi^2\Gamma(t,x-y)\,dm \\
&\quad + \int_{Q_R\times\Rd} (f^2+\tfrac{1}{\delta^2}p^2)\abs{\nabla\psi}^2 \Gamma(t,x-y)\,dm \nonumber\\
&\leq C\exp\inp*{-\frac{R^2}{Ct}} \int_{Q_R\times\Rd} (f^2+p^2)\,dm\,, \nonumber
\end{align}
which bounds the first and third term.
For the second term, we can apply \lref{polyest} to see that
\begin{align}
\label{e.errbound3}
\int_{\Rd} \frac{\abs{\nabla (p\Gamma(t,x-y))}^2}{\Gamma(t,x-y)^2} \Gamma(t,x-y)\,dx &\leq \frac{4dm}{t} \int_{\Rd} p^2\Gamma(t,x-y)\,dx\\
& \leq \frac{8dm}{t} \int_{Q_{\sfrac{3R}{4}}} p^2\Gamma(t,x-y)\,dx\,.\nonumber
\end{align}
Combining \eref{errbound1}, \eref{errbound2}, and \eref{errbound3} with \eref{testingerr}, we see that
\begin{align*}
\int_{Q_{\sfrac{3R}{4}}\times\Rd} (\abs{\nabla_v f}^2 + \tfrac{1}{\delta}p^2) \Gamma(t,x-y)\,dm 
&\leq \frac{\delta}{2} \int_{Q_{\sfrac{3R}{4}}\times\Rd} f^2 \Gamma(t+C,x-y) \,dm \\
&\quad+ C\exp\inp*{-\frac{R^2}{Ct}} \int_{Q_R\times\Rd} (f^2+p^2)\,dm \\
 &\quad+\frac{Cm}{\delta^2t}\int_{Q_{\sfrac{3R}{4}}} p^2 \Gamma(t,x-y)\,dx\,.
\end{align*}
If $t\in [2C\delta^{-1}m, R]$, then we can combine the final term on the right hand side with the left hand side, which yields
\begin{align}
\label{e.expcaccio}
\int_{Q_{\sfrac{3R}{4}}\times\Rd} (&\abs{\nabla_v f}^2 + p^2) \Gamma(t,x-y)\,dm \\
&\leq \delta \int_{Q_{\sfrac{3R}{4}}\times\Rd} f^2 \Gamma(t+C,x-y)\,dm + C\exp\inp*{-cR} \int_{Q_R\times\Rd} (f^2+p^2)\,dm\,. \nonumber
\end{align}
Now it only remains to estimate the final term on the right hand side of this inequality. Testing \eref{pdepolylemma} with $f \psi^2$ and $p \psi^2$ and performing a similar computation to \eref{testfexp} and \eref{testpexp} gives, for $\theta\in (0,\infty)$
\begin{align}
\label{e.testf}
\int_{Q_R\times\Rd} \abs{\nabla_v f}^2 \psi^2\,dm &\leq \frac{\theta R^2}{2} \int_{Q_R} p^2 \psi^2\,d\sigma \\
&\quad+ C \int_{Q_R\times\Rd} (\theta^{-1}R^{-2} + \abs{\nabla \psi}^2) f^2\,dm\,, \nonumber \\
\label{e.testp}
\int_{Q_R} p^2 \psi^2\,d\sigma &\leq \theta^{-1}R^{-2} \int_{Q_R\times\Rd} \abs{\nabla_v f}^2 \psi^2\,dm \\
&\quad+ C\theta R^2 \int_{Q_R} \abs{\nabla p}^2\psi^2\,dx + C\theta \int_{Q_R} \abs{\nabla \psi}^2 p^2\,dx\,. \nonumber
\end{align}
Using the definition of $\psi$ and combining \eref{testf} and \eref{testp} we see that
\begin{equation}
\label{e.finalpbound}
\int_{Q_{\sfrac{3R}{4}}} p^2\,dx \leq C\theta R^2\int_{Q_R} \abs{\nabla p}^2\,dx + C\theta \int_{Q_R} p^2\,dx + \frac{C}{\theta^2 R^4}\int_{Q_R\times\Rd} f^2\,dm\,.
\end{equation}
Combining \eref{polytailcutoff}, \eref{polymarkov}, and \eref{polyinftybound} with \eref{finalpbound} and taking  $\theta = C^{-m}$, we see that
\begin{equation}
\label{e.polylinftybound}
\norm{p}_{L^\infty(Q_R)}^2 \leq C^m \int_{Q_{\sfrac{3R}{4}}} p^2\,dx \leq \frac{C^m}{R^4} \int_{Q_R\times\Rd} f^2\,dm\,.
\end{equation}
Thus, returning to the term we want to bound, if $R\geq Cm$, we get that
\begin{align*}
\exp(-cR) \int_{Q_R\times\Rd} (f^2+p^2)\,dm &\leq (1+C^m)\exp(-cR)\int_{Q_R\times\Rd} f^2\,dm \\
&\leq \exp\inp*{-\tfrac{1}{2} cR} \int_{Q_R\times\Rd} f^2\,dm\,,
\end{align*}
which completes the first part of the proof.

Finally, it only remains to show 
$$  \int_{Q_{\sfrac{3R}{4}}} \abs{\g}^2 \Gamma(t,x-y) \leq \delta \int_{Q_{\sfrac{3R}{4}}\times \Rd} f^2 \Gamma(t+C,x-y)\,dxd\gamma + \exp(-cR)\int_{Q_R\times \Rd} f^2\,dxd\gamma\,.$$
Testing \eref{pdepolylemma} with $\phi\Gamma(t,x-y)$, where $\phi\in L^2(Q_{\sfrac{3R}{4}};H^1_\gamma)$ gives that 
$$\int_{Q_{\sfrac{3R}{4}}\times \Rd} \phi Bf \Gamma(t,x-y)\,dxd\gamma = \int_{Q_{\sfrac{3R}{4}}\times \Rd}( \nabla_v\phi \cdot\a\nabla_v f +p\phi) \Gamma(t,x-y)\,dxd\gamma \,.$$
Therefore, using \eref{expcaccio} and taking the supremum over $\phi$ such that $\norm{\phi}_{L^2(Q_{\sfrac{3R}{4}};H^1_\gamma)}\leq 1$ yields
$$\norm{Bf\Gamma(t,x-y)}^2_{L^2(Q_{\sfrac{3R}{4}}; H^{-1}_\gamma)} \leq \delta \int_{Q_{\sfrac{3R}{4}}\times \Rd} f^2 \Gamma(t+C,x-y)\,dxd\gamma + \exp(-cR)\int_{Q_R\times \Rd} f^2\,dxd\gamma\,,$$
which completes the proof.
\end{proof}
Now we can move on to extending \lref{loworderlargescale} to higher orders.
\begin{proof}[Proof of \tref{largescale}]
As we already proved the result for $m=0,1$ in \lref{loworderlargescale}, we only need to consider the case where $m\geq 2$. As before, we find $\psi\in\A_m$ such that, for $k\in \{0,\ldots, m\}$
\begin{equation*}
D^k \hat{\psi}\,(0) = D^k \hat{f}\,(0)
\end{equation*}
and 
\begin{align}
 \label{e.lscorrectorbound}
\norm{\psi}_{{\uL^2(Q_r)}} &\leq \sum_{k=0}^m \inp*{\frac{Cr}{k+1}}^k \abs{D^k\hat{f}\,(0)} \\
&\leq C^m \norm{f}_{\uL^2(Q_r)}\,. \nonumber
\end{align}
Denoting $g = f-\psi$, we can use that $D^{m+1}\psi = 0$ and apply \lref{fdbound} to get that
\begin{equation}
\label{e.vavgbound}
\sup_{z\in Q_r} \abs{\,\hat{g}\,(z)} \leq \frac{Cr^{m+1}}{(m+1)!} \sup_{z\in Q_{r+m+1}} \abs{D^{m+1}\hat{f}\,(z)} \leq \inp*{\frac{Cr}{R}}^{m+1} \norm{f}_{\uL^2(Q_R; L^2_\gamma)}\,.
\end{equation}

Now, applying \lref{heatkernelpoincare}, for every $r\in [Cm, R]$ and $y\in Q_{\sfrac{R}{2}}$, we see that
\begin{align}
\label{e.vhkpoincare}
&\int_{Q_{\sfrac{3R}{4}}\times \Rd} g^2 \Gamma(r,x-y)\,dm \\
&\qquad\leq C \int_{Q_{\sfrac{3R}{4}}} {\hat{g}\,}^2 \Gamma(r,x-y)\,dx + C \int_{Q_{\sfrac{3R}{4}}\times\Rd} (\abs{\nabla_v g}^2 + \abs{\h}^2)\Gamma(r+1,x-y)\,dm\,,\nonumber
\end{align}
where $\h$ satisfies $\nabla_v^*\h = v\cdot\nabla_x g$. Using \eref{vavgbound}, we can bound the first term on the right hand side of \eref{vhkpoincare} by
\begin{align*}
\int_{Q_{\sfrac{3R}{4}}} {\hat{g}\,}^2\Gamma(r,x-y) &\leq \int_{Q_{\sfrac{7R}{8}}} \hat{g}\,(\cdot-y)^2\Gamma(r,x) \\
&= \int_0^{2r}\int_{\partial Q_s} \hat{g}\,(\cdot -y)^2 \Gamma(r,x)\,ds + \int_{2r}^{\sfrac{7R}{8}}\int_{\partial Q_s} \hat{g}\,(\cdot -y)^2 \Gamma(r,x)\,ds \\
&\leq \sup_{Q_{3r}} {\hat{g}\,}^2 \int_{Q_r}\Gamma(r,x) + \int_{2r}^{\sfrac{7R}{8}}\sup_{Q_{s+r}} {\hat{g}\,}^2 e^{-\frac{cs^2}{r}}\,ds \\
&\leq \norm{f}_{\uL^2(Q_R; L^2_\gamma)}^2\inp*{\inp*{\frac{Cr}{R}}^{2(m+1)} + \int_{2r}^{\sfrac{7R}{8}}\inp*{\frac{Cs}{R}}^{2(m+1)}  e^{-\frac{cs^2}{r}}\,ds }\\
&\leq \inp*{\frac{Cr}{R}}^{2(m+1)} \norm{f}_{\uL^2(Q_R;L^2_\gamma)}^2\,.
\end{align*}
Expanding on the last line in the above computation, we bound the given exponential integral by changing variables and computing, for $r\geq Cm$,
\begin{align*}
\int_{2r}^{\sfrac{7R}{8}}\inp*{\frac{Cs}{R}}^{2(m+1)}  e^{-\frac{cs^2}{r}}\,ds &\leq \inp*{\frac{Cr}{R}}^{2(m+1)} \int_2^\infty s^{2(m+1)} re^{-crs^2}\,ds \\
 &\leq \inp*{\frac{Cr}{R}}^{2(m+1)} \int_2^\infty s^{2(m+1)} me^{-ms^2}\,ds \\
 &\leq \inp*{\frac{Cr}{R}}^{2(m+1)} \int_m^\infty \frac{1}{2}\inp*{\frac{s}{m}}^{m+\frac{1}{2}}e^{-s}\,ds \\
 &\leq \inp*{\frac{Cr}{R}}^{2(m+1)}\,.
 \end{align*}
To bound the second term, we take $\delta>0$ and apply \lref{polylem}, with $L g = -L \psi \in \P_{m-2}$, to obtain 
\begin{align*}
\int_{Q_{\sfrac{3R}{4}}\times\Rd} (\abs{\nabla_v g}^2 + \abs{\h}^2 &+ p^2)\Gamma(r+1,x-y)\,dm  \\
&\leq \delta\int_{Q_{\sfrac{3R}{4}}\times\Rd} g^2 \Gamma(r+1+C,x-y)\,dm 
+ \exp(-cR)\norm{g}^2_{\uL^2(Q_R;L^2_\gamma)}\,.
\end{align*}
Taking $\delta$ sufficiently small and plugging these bounds into \eref{vhkpoincare}, we now have that
\begin{align*}
\int_{Q_{\sfrac{3R}{4}}\times\Rd} g^2 \Gamma(r,x-y)\,dm &\leq \frac{1}{2}\int_{Q_{\sfrac{3R}{4}}\times\Rd} g^2 \Gamma(r+C,x-y)\,dm \\
&\quad+ \inp*{\frac{Cr}{R}}^{2(m+1)} \norm{f}_{\uL^2(Q_R;L^2_\gamma)}^2 +  \exp(-cR)\norm{g}^2_{\uL^2(Q_R;L^2_\gamma)}\,.
\end{align*}
Taking the integral average over $y\in Q_r$ and using that $\abs{Q_{r+C}} \leq \tfrac{3}{2} \abs{Q_r}$ for $r\geq C$, we get, for $r\in [Cm, \tfrac{1}{8}R]$
\begin{align*}
\int_{Q_{\sfrac{3R}{4}}\times\Rd} g^2\inp*{\frac{\1_{Q_r}}{\abs{Q_r}} * \Gamma(r,\cdot)} \,dm 
&\leq \frac{3}{4} \int_{Q_{\sfrac{3R}{4}}\times\Rd}g^2\inp*{\frac{\1_{Q_{r+C}}}{\abs{Q_{r+C}}} * \Gamma(r+C,\cdot)} \,dm \\
&\quad+ \inp*{\frac{Cr}{R}}^{2(m+1)} \norm{f}_{\uL^2(Q_R)}^2 +  \exp(-cR)\norm{g}^2_{\uL^2(Q_R)}\,.
\end{align*}
Iterating in $r$ then gives that
\begin{equation*}
\int_{Q_{\sfrac{3R}{4}}\times\Rd} g^2\inp*{\frac{\1_{Q_r}}{\abs{Q_r}} *\Gamma(r,\cdot)} \,dm
\leq  \inp*{\frac{Cr}{R}}^{2(m+1)} \norm{f}_{\uL^2(Q_R; L^2_\gamma)}^2 +  \exp(-cR)\norm{g}^2_{\uL^2(Q_R; L^2_\gamma)}\,.
\end{equation*}
Thus, for $r\in [Cm, \tfrac{1}{8}R]$, we now have that
\begin{equation*}
\norm{g}^2_{\uL^2(Q_r; L^2_\gamma)} \leq \inp*{\frac{Cr}{R}}^{2(m+1)} \norm{f}_{\uL^2(Q_R; L^2_\gamma)}^2 +  \exp(-cR)\norm{g}^2_{\uL^2(Q_R; L^2_\gamma)}\,.
\end{equation*}
To bound the final term on the right hand side of the previous display, we can use \eref{lscorrectorbound} and that $R\geq Cm$ to see that$$\norm{g}_{\uL^2(Q_R; L^2_\gamma)} \leq \norm{f}_{\uL^2(Q_R;L^2_\gamma)} + \norm{\psi}_{\uL^2(Q_R; L^2_\gamma)} \leq C^m \norm{f}_{\uL^2(Q_R;L^2_\gamma)}\,.$$
From this, it follows that 
$$ \exp(-cR)\norm{g}_{\uL^2(Q_R;L^2_\gamma)} \leq C^m\exp(-cR)\norm{f}_{\uL^2(Q_R; L^2_\gamma)} \leq \inp*{\frac{Cr}{R}}^{m+1} \norm{f}_{\uL^2(Q_R; L^2_\gamma)}\,.$$
Thus, as $g = f - \psi$, we have now shown that 
\begin{equation}
\label{e.almostlargescale}
\norm{f-\psi}_{\uL^2(Q_r; L^2_\gamma)} \leq \inp*{\frac{Cr}{R}}^{m+1} \norm{f}_{\uL^2(Q_R; L^2_\gamma)}\,.
\end{equation}

The only remaining problem is that the $\psi$ chosen above is not necessarily a solution of $L$. To fix this issue, we show that we can replace $\psi$ by an element of $\A_m$ which is a solution to $L$ and maintain the bounds shown above. To this end, it only remains to estimate $p := L\psi$. 
Applying \eref{polylinftybound} from the proof of \lref{polylem}, with $f-\psi$, we see that
$$ \norm{p}_{L^\infty(Q_r)} \leq \frac{C^m}{r^2} \norm{f-\psi}_{\uL^2(Q_r; L^2_\gamma)} \leq \frac{1}{r^2} \inp*{\frac{Cr}{R}}^{m+1} \norm{f}_{\uL^2(Q_R; L^2_\gamma)}\,.$$
Similarly, for $k\in\{0,\ldots, m\}$ and $r\in [Cm, \tfrac{1}{16}R]$, we can apply this bound as well as \eref{almostlargescale} and \lref{fdbound} to compute that
\begin{align*}
\norm{D^k p}_{L^\infty(Q_r)} &\leq \frac{C^{m-k}}{r^2} \norm{D^k(f-\psi)}_{\uL^2(Q_r; L^2_\gamma)} \\
&\leq \frac{C^{m-k}}{r^2}\inp*{\frac{Cr}{R}}^{m+1-k} \norm{D^k f}_{\uL^2(Q_{\sfrac{R}{2}}; L^2_\gamma)} \\
&\leq \frac{C^{m-k}}{r^2}\inp*{\frac{Cr}{R}}^{m+1-k}\inp*{\frac{Ck}{R}}^{k}  \norm{f}_{\uL^2(Q_R; L^2_\gamma)} \\
&\leq \frac{1}{r^2} \inp*{\frac{k}{r}}^k \inp*{\frac{Cr}{R}}^{m+1}\norm{f}_{\uL^2(Q_R; L^2_\gamma)}\,.
\end{align*}
Using this bound, \lref{hetpolysolveseq}, and \eref{derivfdbound}, we can find $\xi\in \A_m$ with $L \xi  = p$ and 
$$\norm{\xi}_{\uL^2(Q_r; L^2_\gamma)} \leq C r^2 \sum_{k=0}^m \inp*{\frac{Cr}{k+1}}^k \abs{D^k \hat{p}(0)} \leq \inp*{\frac{Cr}{R}}^{m+1} \norm{f}_{\uL^2(Q_R; L^2_\gamma)}\,.$$
Redefining $\psi := \psi + \xi$ and plugging this bound into \eref{almostlargescale}, we now have that 
$$\norm{f-\psi}_{\uL^2(Q_r; L^2_\gamma)} \leq \inp*{\frac{Cr}{R}}^{m+1} \norm{f}_{\uL^2(Q_R; L^2_\gamma)}\,,$$
where $\psi$ is a solution of $L$.
\end{proof}

\subsection*{Acknowledgments} 
The author was partially supported by NSF Grant DMS-2000200. 

\small
\bibliographystyle{abbrv}
\bibliography{hypoelliptic}
\end{document}